\documentclass{article}

\usepackage[latin1]{inputenc}
\usepackage[english]{babel}

\usepackage{amsmath}
\usepackage{amsthm}
\usepackage{amssymb}

\usepackage[final, notref, notcite]{showkeys}
\usepackage{color}
\usepackage{graphicx}
\usepackage{hyperref}
\hypersetup{
    colorlinks=true,
    linkcolor=black,
    citecolor=black,
    filecolor=black,
    urlcolor=black,
}
\usepackage{cleveref}
\usepackage{fullpage}

\usepackage[shortlabels]{enumitem}

\usepackage[autostyle]{csquotes}

\usepackage[
    backend=bibtex,
    style=numeric-comp,
    firstinits=true,
    maxbibnames=99
]{biblatex}
\addbibresource{../../../Bibliothek.bib}

\newcommand{\id}{\mathcal{I}}

\newcommand{\N}{\mathbb{N}}
\newcommand{\R}{\mathbb{R}}
\newcommand{\C}{\mathbb{C}}
\renewcommand{\H}{\mathbb{H}}
\renewcommand{\ss}{\mathbb{S}}
\newcommand{\F}{\mathbb{F}}

\newcommand{\Q}{\mathcal{Q}}
\renewcommand{\Re}{\mathrm{Re}}
\renewcommand{\Im}{\mathrm{Im}}
\newcommand{\lhol}{\mathcal{SH}_L}
\newcommand{\rhol}{\mathcal{SH}_R}
\newcommand{\intrin}{\mathcal{N}}

\newcommand{\sderiv}[1][]{\partial_{S#1}}
\newcommand{\rsderiv}[1][]{\partial_{\overset{\leftarrow}{S#1}}}
\newcommand{\bpartial}{\overline{\partial}}
\newcommand{\rpartial}[1]{\partial_{\overset{\leftarrow}{#1}}}
\newcommand{\rbpartial}[1]{\bpartial_{\overset{\leftarrow}{#1}}}

\newcommand{\boundOP}{\mathcal{B}}
\newcommand{\closOP}{\mathcal{K}}
\newcommand{\cliff}[1]{\mathbb{R}_{#1}}

\newcommand{\ran}{\operatorname{ran}}
\newcommand{\dom}{\operatorname{dom}}
\newcommand{\dist}{\operatorname{dist}}
\newcommand{\ext}{\operatorname{ext}}
\newcommand{\fdom}{\mathcal{D}}
\newcommand{\res}{\operatorname{Res}}
\newcommand{\linspan}[1]{\operatorname{span}_{#1}}

\theoremstyle{plain}
\newtheorem{theorem}{Theorem}[section]
\newtheorem{lemma}[theorem]{Lemma}

\newtheorem{corollary}[theorem]{Corollary}

\theoremstyle{definition}
\newtheorem{definition}[theorem]{Definition}
\newtheorem{example}[theorem]{Example}

\theoremstyle{remark}
\newtheorem{remark}[theorem]{Remark}

\title{A direct approach to the $S$-functional calculus for closed operators}

\author{
Jonathan Gantner \\
Politecnico di Milano\\
Dipartimento di Matematica\\
Via E. Bonardi, 9\\
20133 Milano, Italy\\
jonathan.gantner@polimi.it\\
}

\crefname{enumi}{}{}

\begin{document}
\maketitle

\begin{abstract}
The $S$-functional calculus for slice hyperholomorphic functions generalizes the Riesz-Dunford-functional calculus for holomorphic functions to quaternionic linear operators and to $n$-tuples of noncommuting operators. For an unbounded closed operator, it is defined, as in the classical case, using an appropriate transformation and the $S$-functional calculus for bounded operators. This is however only possible  if the  $S$-resolvent set of the operator contains a real point.

In this paper, we define the $S$-functional calculus directly via a Cauchy integral, which allows us to consider also operators whose resolvent sets do not contain real points. We show that the product rule and the spectral mapping theorem also hold true with this definition and that the $S$-functional calculus is compatible with polynomials, although polynomials are not included in the set of admissible functions if the operator is unbounded. 

We also prove that the $S$-functional calculus is able to create spectral projections. In order to do this, we remove another assumption usually made: we do not assume that admissible functions are defined on slice domains. Instead, we also consider functions that are defined on not necessarily connected sets. This leads to an unexpected phenomenon: the $S$-functional calculi for left and right slice hyperholomorphic functions become inconsistent and give different operators for functions that are both left and right slice hyperholomorphic. We show that any such function is the sum of a locally constant and an intrinsic function. For intrinsic functions both functional calculi agree, but for locally constant functions they must give different operators unless any spectral projection commutes with arbitrary scalars.
 \end{abstract}
\section{Introduction}
Since \citeauthor{Birkhoff:1936} showed in their paper \cite{Birkhoff:1936} of 1936 that quantum mechanics can only be formulated using either complex numbers or quaternions, mathematicians have tried to develop a theory of quaternionic linear operators that provides techniques similar to those for complex linear operators. However, even defining such basic concepts as the spectrum or the resolvent of a quaternionic linear operator caused serious problems. Thus, little progress was made until the discovery of the $S$-functional calculus and the $S$-spectrum about ten years ago.

In the classical theory, the spectrum $\sigma(T_{\C})$ of a linear operator $T_{\C}$ is a generalization of the concept of eigenvalues. It consists of those points $\lambda\in\C$ such that the operator  $\lambda\id-T_{\C}$, which appears in the respective eigenvalue equation, does not have a bounded inverse.  

For a quaternionic right linear operator~$T$, due to the noncommutativity of the quaternionic multiplication, one can consider two possible eigenvalue equations, namely
\[ T(v) - v\lambda = 0 \qquad\text{and}\qquad T(v) - \lambda v = 0.\]
They lead to the notions of right and left eigenvalues. Right eigenvalues were useful in the mathematical theory, for instance for proving the spectral theorem for quaternionic matrices in \cite{Farenick:2003}. They also admit a physical interpretation in quaternionic quantum theory \cite{Adler:1995}. However, the operator associated to the right eigenvalue equation is not right linear and therefore not suitable for defining a resolvent operator and in turn a notion of spectrum. The operator associated to the left eigenvalue equation on the other hand is right linear, but the left eigenvalues have no obvious mathematical or physical meaning.

Another difference between the complex and the quaternionic setting is that right eigenvalues do not occur individually: if $v$ is a right eigenvector of $T$ with respect to $\lambda$, then 
\[T(va) = T(v)a = v\lambda a = va (a^{-1}\lambda a)\]
 for an arbitrary quaternion $a$. Unless $a$ and $\lambda$ commute, the vector $va$ is therefore an eigenvalue with respect to $a^{-1}\lambda a$ instead of $\lambda$.  Thus, if $\lambda$ is a right eigenvector of $T$, then any quaternion of the form $a^{-1}\lambda a$ is also a right eigenvector of $T$ and hence the set of right eigenvalues is axially symmetric. Moreover, the set of eigenvectors associated to a single eigenvalue does in general not constitute a quaternionic right linear space. 

The discovery of the $S$-spectrum the $S$-resolvent operators solved these problems. The $S$-resolvent set $\rho_S(T)$ consists of all quaternions $s$ such that the the operator $\Q_{s}(T) := T^2-2\Re(s) T + |s|^{2}\id$ has a bounded inverse defined on the entire space $V$ and the $S$-spectrum is defined as the complement of the $S$-resovlent set, that is
\[\rho_S(T) = \{s\in\H: \Q_s(T)^{-1}\in\boundOP(V)\}\quad \text{and}\quad\sigma_S(T):= \H\setminus \rho_S(T).\]
For any $s\in\rho_S(T)$, the left and right $S$-resolvent operators are defined as
\[S_L^{-1}(s,T) = \Q_s(T)\overline{s}- T\Q_s(T)^{-1}\quad\text{and}\quad S_R^{-1}(s,T) = \overline{s}\Q_s(T)^{-1}-TQ_s(T)^{-1}.\]

The introduction of these fundamental concepts several years ago opened new possibilities in quaternionic operator theory and allowed to generalize important concepts of classical operator theory: the $S$-functional calculus, which can for instance be found in the monograph \cite{Colombo:2011}, generalizes the Riesz-Dunford functional calculus for holomorphic functions to quaternionic linear operators. Even more, \citeauthor{Colombo:2008} showed in \cite{Colombo:2008} that it also applies to $n$-tuples of noncommuting operators. 

The $S$-functional calculus allows to develop the theory of quaternionic operator groups and semi-groups~\cite{Colombo:2011b, Alpay:2014}. Moreover, the $H^{\infty}$-functional calculus for quaternionic linear operators and for $n$-tuples of noncommuting operators has recently been introduced in \cite{Alpay:} and the continuous functional calculus for normal operators on a quaternionic Hilbert space was developed in \cite{Ghiloni:2013}. It has even been possible to prove the spectral theorems for unitary and for unbounded normal quaternionic linear operators in \cite{Alpay:2016a} resp. \cite{Alpay:2016}. They are also based on the $S$-spectrum. 

Finally, the $S$-point spectrum of an operator coincides with the set of its right eigenvalues  \cite{Colombo:2012}. Thus, the $S$-spectrum admits a physical interpretation within quaternionic quantum theory.

Furthermore, the above concepts do even have applications outside noncommutative operator theory. The $S$-resolvent operators appear for instance in slice hyperholomorphic Schur analysis, in particular within realizations of slice hyperholomorphic Schur functions, see \cite{Alpay:2012, Alpay:2013, Alpay:2014b}. 

In this paper, we discuss and clarify several aspects of the $S$-functional calculus for closed quaternionic operators or $n$-tuples of closed non-commuting operators. It is the analogue of the Riesz-Dunford functional calculus in these settings and it is based on the notion of slice hyperholomorphicity, a generalized notion of holomorphicity: let either $\F_0 = \H$ and $\F= \H$  or let $\F$ be the real Clifford-algebra $\cliff{n}$ and let $\F_0$ be the set of paravectors in $\cliff{n}$. Then any  $x\in\F_0$ can be written in the form $x = x_0 + I_xx_1$ such that $x_0\in\R$, $x_1>0$ and $I_x$ is an imaginary unit satisfying $I_x^2 = -1$.
A function $f:U\subset \F_0 \to \F$ is called left slice hyperholomorphic, if it is of the form 
\[f(x) = \alpha(x_0,x_1) + I_x\beta(x_0,x_1) \]
where $\alpha(x_0,-x_1) = \alpha(x_0,x_1)$ and $\beta(x_0,-x_1) = \beta(x_0,x_1)$ and $\alpha$ and $\beta$ satisfy the Cauchy-Riemann equations. 

Let now either $V$ be a two-sided quaternionic Banach space and let  $T$ be a quaternionic right linear operator on $V$ or let $V$ be a two-sided Banach-module over $\cliff{n}$ and let $T$ be an operator of paravector type on $V$.

Assume that $T$ is bounded and let $f$ be left slice hyperholomorphic on a suitable domain $U$ (precisely, an axially symmetric slice domain) that contains $\sigma_S(T)$ and has a sufficiently regular boundary. Then one defines, inspired by the Cauchy formula for left slice hyperholomorphic functions, 
\begin{equation}\label{Int1}
f(T) := \frac{1}{2\pi} \int_{\partial(U\cap\C_I)}S_L^{-1}(s,T)\,ds_I\,f(s),
\end{equation}
where  $I$ is an arbitrary imaginary unit, $ds_I = (-I) ds$ and $\partial(U\cap\C_I)$ is the boundary of $U$ in the complex plane spanned by $1$ and $I$. This integral is independent of the choices of $U$ and $I$. 

In the complex setting, one can define the Riesz-Dunford functional calculus for closed unbounded operators quite efficiently by reducing it to the one for bounded operators \cite{Dunford:1988}. An analogous approach is possible in our settings, if $T$ is closed and $\rho_S(T)$ contains a real point $a$, see \cite{Colombo:2011}.  Then the operator $A :=  (T-a\id)^{-1}$ equals $-S_{L}^{-1}(a,T)$ and is therefore bounded. If one sets $\Phi_a(s) = (s-a)^{-1}$, then $f \mapsto f \circ\Phi_a^{-1}$  defines a bijective relation between those functions that are slice hyperholomorphic on the $S$-spectrum  of $T$ and at infinity and those functions that are slice hyperholomorphic on the $S$-spectrum of $A$. One therefore defines 
\begin{equation*}f(T) = (f\circ\Phi_a^{-1})(A).\end{equation*}
If $U$ is an axially symmetric slice domain with sufficiently regular boundary and $\sigma_S(T)\subset U$ such that $f$ is slice hyperholomorphic on $U$, then the operator $f(T)$ can  be represented as
\begin{equation} \label{repUno}
f(T) = f(\infty)\id + \frac{1}{2\pi}\int_{\partial(U\cap\C_I)}S_L^{-1}(s,T)\,ds_I\,f(s),
\end{equation}
where the notation is as in \eqref{Int1}. In particular this shows, that $f(T)$ is independent of the choice of the real point $a$. Similarly, one can proceed to define the $S$-functional calculus for right slice hyperholomorphic functions. However, this procedure requires that the $S$-spectrum of the operator $T$ contains a real point. Otherwise, for non-real $a$, the operator $(T-a\id)^{-1}$ does not equal the $S$-resolvents and the composition with the function $\Phi_a^{-1}$ does not preserve slice hyperholomorphicity.
 
 In this paper we choose a different approach and define the $S$-functional calculus directly via \eqref{repUno} as it is done in \cite{Taylor:1951} for complex linear operators. We show that this defines a meaningful functional calculus under the only assumption that the $S$-resolvent set of $T$ is nonempty, which is analogue to the classical complex case. In particular, we do not require that $\rho_S(T)$ contains a real point.
 
 We also remove another assumption, which existed only due to the history of the development of the theory, namely that the function $f$ is defined on a slice domain. Instead, we consider functions  that are defined on not necessarily connected sets. This is in particular important for obtaining spectral projections within the scope of the $S$-functional calculus, but it leads to an unexpected phenomenon: the $S$-functional calculi for left and right slice hyperholomorphic functions become inconsistent and must therefore be considered as separate calculi. Indeed, for functions that are left and right slice hyperholomorphic,  $f(T)$ will in general be different depending on whether $f$ is considered as a left or as a right slice hyperholomorphic function.  
 
 We discuss this phenomenon in detail: it turns out that, up to addition with a locally constant function, every left and right slice hyperholomorphic function is intrinsic. For intrinsic functions and for functions whose domains are connected, both functional calculi agree. For a locally constant function $f$ however, the two $S$-functional calculi must give different operators $f(T)$, unless all invariant subspaces associated to spectral projections are two-sided subspaces.  Finally, we also give an explicit example for this situation in two dimensions. 
 
Furthermore, we prove the product rule using the $S$-resolvent equation, which was recently discovered in \cite{Alpay:2015}, and we discuss the compatibility of the $S$-functional calculus with polynomials in $T$. For bounded operators, these are included in the admissible class of functions, but for unbounded operators they need to be investigated separately. As it happens often in the slice hyperholomorphic setting, we have to restrict ourselves to intrinsic functions. We show that if $x\in\dom(T^n)$ for some $n\in\N$, then $f(T)x\in\dom(T^n)$ and $f(T)p(T)x = p(T)f(T)x$ for any admissible intrinsic function $f$ and any intrinsic polynomial $p$ of degree lower or equal to $n$. Moreover, if $f$ has a zero of order $n$ at infinity, then $f(T)x \in \dom(T^n)$ for any vector $x$ and $p(T)f(T)x = (pf)(T)x$ for any intrinsic polynomial $p$ of degree lower or equal to $n$. As a consequence, we obtain that $p(T)$ is closed for any intrinsic polynomial $p$.

Finally, we  show that the spectral mapping theorem and the theorem on composite functions also hold with our definition of the $S$-functional calculus and we discuss the possibility to generate spectral projections: if $\sigma$ is a spectral set of $T$, that is an open and closed subset of the closure of $\sigma_S(T)$ in $\F_0\cup\{+\infty\}$ and $U_{\sigma}$ is an axially symmetric open set such that $\sigma \subset U$ and $\sigma \cap (\sigma_S(T)\setminus\sigma) = \emptyset$, then the characteristic function $\chi_{\sigma}(s)$ of $U_{\sigma}$ is admissible for the $S$-functional calculus and $E_{\sigma}:=\chi_{\sigma}(T)$ is a projection that commutes with $T$. Spectral projections were studied for bounded operators in \cite{Colombo:2011, Alpay:2015}, but they have never been investigated for unbounded quaternionic operators.

\section{Preliminary results}
Slice hyperholomorphic functions can be considered in different settings. We shall be interested in two different situations, in which they allow to develop, with analogous arguments, the spectral theory of certain operators: the quaternionic setting and the setting of Clifford-algebra-valued functions of a paravector variable. Both settings are treated in detail in \cite{Colombo:2011}.

The skew-field of quaternions consists of the real vector space 
$$
\H:=\left\{\xi_0e_0 + \sum_{\ell=1}^3\xi_\ell e_\ell: \xi_\ell\in\R\right\},
$$
endowed with an associative product with unity $e_0$ that satisfies
\[e_ie_j = -e_je_i\qquad\text{and}\qquad e_i^2 = -1\]
for $i,j\in\{1,\ldots,3\}$ with $i\neq j$. Since $e_0$ is the unity of the skew-field, we write $1$ instead of $e_0$ and identify $\linspan{\R}\{e_0\}$ with the field of real numbers. The real part of a quaternion $x = \xi_0 + \sum_{\ell=1}^3\xi_\ell e_\ell$ is therefore defined as $\Re(x) := \xi_0$, its vectorial part as $\underline{x} := \sum_{\ell=1}^3\xi_\ell e_\ell$ (sometimes we also write $\Im(x) := \underline{x}$) and its conjugate as $\overline{x} := \Re(x) - \underline{x}$. The modulus of $x$ is defined by $|x|^2 = \sum_{i=0}^3 |\xi_i|^2$ and the relation $x\overline{x} = \overline{x}x = |x|^2$ holds true. Hence $x^{-1} = \overline{x}/|x|^2$. 

Each element of the set
\[\ss := \{ x\in\H: \Re(x) = 0, |x| = 1 \}\]
 is a square-root of $-1$ and is therefore called an imaginary unit. For any $I\in\ss$, the subspace $\C_I := \{x_0 + I x_1: x_0,x_1\in\R\}$ is an isomorphic copy of the field of complex numbers. If $I,J\in\ss$ with $I\perp J$, set $K=IJ = -JI$. Then $1$, $I$, $J$ and $K$ form an
 orthonormal basis of $\H$ as a real vector space and $1$ and $J$ form an orthonormal basis of $\H$ as a left or right vector space over the complex plane $\C_I$, that is
 \begin{equation} \label{QuatSplit}
 \H = \C_I + \C_I J\quad\text{and}\quad \H = \C_I + J\C_I.
\end{equation}
For consistency with the Clifford-algebra setting, we introduce the following notation: after choosing $I$ and $J$, we set $I_{\emptyset}=1$, $I_{\{1\}} = I$, $I_{\{2\}}=J$ and $I_{\{1,2\}}=K=IJ$ and we call $\{I,J\}$ a generating basis of $\H$. Any quaternion can then be represented as $x = \sum_{A\subset\{1,2\}}\xi_AI_A$ with $\xi_A\in\R$ and as $x = \sum_{A\subset\{2\}} z_AI_A$ or $x=\sum_{A\subset\{2\}}I_A\widetilde{z_A}$ with $z_{A}, \widetilde{z_A}\in\C_I$.

  Finally, any quaternion $x$ belongs to a complex plane: if we set
 \[I_x := \begin{cases}\underline{x}/|\underline{x}|,& \text{if  }\underline{x} \neq 0 \\ \text{any }I\in\ss, \quad&\text{if }\underline{x}  = 0\end{cases},\]
 then $x = x_0 + I_x x_1$ with $x_0 =\Re(x)$ and $x_1 = |\underline{x}|$. The set
 \[
 [x] := \{x_0 + Ix_1: I\in\ss\},
 \]
is a 2-sphere, that reduces to a single point if $x$ is real.

In order to introduce the real Clifford-algebra $\cliff{n}$ over $n$ units, we consider the space
\[\R^{n+1} = \left\{ \xi_0e_0 + \sum_{i=1}^n\xi_ie_i: \xi_i\in\R \right\}.\]
The Clifford-algebra $\cliff{n}$ is the algebra that is generated by this space when  $e_0$ is the unity of the multiplication and
\[ e_ie_j = -e_je_i\qquad\text{and}\qquad e_i ^2 = -1\]
for  $i,j\in\{1,\ldots,n\}$ with $i\neq j$. Any element $x\in\cliff{n}$ can be written in the form
$x = \sum_{A\subset \{1,\ldots,n\}}\xi_Ae_A$ with $\xi_A\in\R$, where $e_{\emptyset} = e_0$ and $e_A = e_{i_1}\cdots e_{i_k}$ for any other subset $A =\{i_1<i_2<\ldots<i_k\}$  of $\{1,\ldots,n\}$. Since $e_0$ is the unity of the algebra, we write $1$ instead of $e_0$ and identify $\R$ with $\linspan{\R}\{e_0\}$. The conjugation on the Clifford  algebra $\cliff{n}$ is defined by the rule $\overline{xy} = \overline{y}\;\!\overline{x}$  and its action on the generating basis
$\overline{e_0} = e_{0}$ and $\overline{e_i} = - e_i$ for $i\in\{1,\ldots,n\}$. The modulus of an element $x=\sum_{A\subset\{1,\ldots,n\}}\xi_Ae_A\in\cliff{n}$ is defined via $|x|^2 = \sum_{A\subset\{1,\ldots,n\}}\xi_A^2$.

An element $x = \xi_0 + \sum_{i=1}^n\xi_ie_i$ of the generating space $\R^{n+1}$ is called a paravector. A paravector can be decomposed into its real part $\Re(x) = \xi_0$ and its vectorial part $\underline{x} = \sum_{i=1}^n\xi_ie_i$, which will sometimes also be denoted as $\Im(x) = \underline{x}$. Obviously, $x = \Re(x) + \underline{x}$ and $\overline{x} = \Re(x) - \underline{x}$. Any paravector is invertible as $x\overline{x} = |x|^2$ and hence $x^{-1} = \overline{x}/|x|^2$. However, not any element of $\cliff{n}$ is invertible: $\cliff{1}$ is isomorphic to the field of complex numbers $\C$ and $\cliff{2}$ is isomorphic to the skew-field of quaternions $\H$, but if $n\geq 3$, then $\cliff{n}$ contains zero divisors. 

In the Clifford-algebra setting, imaginary units are elements of the set
\[ \ss := \{x\in\R^{n+1}: \Re(x) = 0, |x|=1 \}.\]
Also here $I^2=-1$ for any $I\in\ss$. Hence $\C_I := \{x_0 + Ix_1: x_0,x_1\in\R\}$  is an isomorphic copy of the field of complex numbers. Any paravector $x$ belongs to such a complex plane: if we set 
 \[I_x := \begin{cases}\underline{x}/|\underline{x}|,& \text{if  }\underline{x} \neq 0 \\ \text{any }I\in\ss, \quad&\text{if }\underline{x}  = 0,\end{cases}\]
 then $x = x_0 + I_x x_1$ with $x_0 =\Re(x)$ and $x_1 = |\underline{x}|$. The set
 \[
 [x] := \{x_0 + Ix_1: I\in\ss\},
 \]
is an $(n-1)$-sphere in $\R^{n+1}$, that reduces to a single point if $x$ is real.

Moreover, for any $I\in\ss$, one can find a basis of $\R^{n+1}$ that contains $I$ and generates $\cliff{n}$ as an algebra, cf. \cite[Proposition~2.2.10.]{Colombo:2011}.
\begin{lemma}\label{CliffSplit}
Let $I\in\ss$ and set $I_1 := I$. Then there exist $I_2, \ldots, I_n\in\ss$ such that $\{1,I_1,\ldots,I_2\}$ forms an generating basis of $\cliff{n}$, i.e. $I_iI_j = -I_jI_i$. In this case any $x\in\cliff{n}$ can be written in the form $x = \sum_{A\subset\{1,\ldots,n\}}\xi_aI_A$ with $\xi_A\in\R$, where $I_{\emptyset}=1$ and $I_{A} = I_{i_1}\cdots I_{i_k}$ for any other subset $A=\{i_1<\ldots <i_k\}$ of $\{1,\ldots,n\}$. We thus have
\[x = \sum_{A\subset\{2,\ldots,n\}} z_A I_A\quad\text{ with }z_A \in\C_I\]
and similarly
\[x = \sum_{A\subset\{2,\ldots,n\}}  I_A \widetilde{z_A}\quad\text{ with }\widetilde{z_A}\in\C_I.\]
In particular, $\cliff{n}$ can be considered as a complex vector space over $\C_I$, when the multiplication with scalars is simply defined by restricting the Clifford-multiplication from the left or from the right to elements of $\C_I$.
\end{lemma}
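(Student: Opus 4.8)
The plan is to split the statement into three parts: constructing the remaining generators $I_2,\dots,I_n$; showing that the $2^n$ products $I_A$ form an $\R$-basis of $\cliff n$; and finally reading off the two $\C_I$-expansions of an arbitrary $x\in\cliff n$.

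First I would record the elementary identity that underlies everything: if $u=\sum_{i=1}^n u_ie_i$ and $v=\sum_{i=1}^n v_ie_i$ are purely imaginary paravectors, then $uv+vu=-2\langle u,v\rangle$, where $\langle\cdot,\cdot\rangle$ denotes the Euclidean scalar product; this follows immediately from $e_ie_j+e_je_i=-2\delta_{ij}$. In particular $u^2=-|u|^2$ for such $u$, and two purely imaginary paravectors anticommute precisely when they are orthogonal. Now the space $V:=\linspan{\R}\{e_1,\dots,e_n\}$ of purely imaginary paravectors is an $n$-dimensional Euclidean space, and $I=I_1\in\ss$ is a unit vector in it; by Gram--Schmidt I can extend $I_1$ to an orthonormal basis $\{I_1,\dots,I_n\}$ of $V$. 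Each $I_j$ then lies in $\ss$, satisfies $I_j^2=-1$, and $I_iI_j=-I_jI_i$ for $i\neq j$, so $\{1,I_1,\dots,I_n\}$ is a generating basis.

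The only genuinely non-routine point is that $\{I_A:A\subseteq\{1,\dots,n\}\}$ is an $\R$-basis of $\cliff n$, i.e.\ that these $2^n$ elements are linearly independent. I would argue this via the universal property of the Clifford algebra. Since the $I_i$ satisfy the defining relations of the generators $e_i$, the linear map $e_i\mapsto I_i$ extends to an algebra endomorphism $\varphi$ of $\cliff n$ with $\varphi(e_A)=I_A$; and since $\{I_1,\dots,I_n\}$ is itself an orthonormal basis of $V$, the linear map $I_j\mapsto e_j$ likewise extends to an algebra endomorphism $\psi$ of $\cliff n$. Then $\psi\circ\varphi$ is an algebra endomorphism that fixes the generators $e_i$, hence equals the identity; so $\varphi$ is injective and, $\cliff n$ being finite-dimensional, an automorphism. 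As $\{e_A\}$ is a basis, so is $\{I_A\}=\{\varphi(e_A)\}$. Alternatively one can just invoke \cite[Proposition~2.2.10]{Colombo:2011}.

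It remains to read off the expansions. Starting from $x=\sum_{A\subseteq\{1,\dots,n\}}\xi_AI_A$ with $\xi_A\in\R$, I would group the terms according to whether $1\in A$. Terms with $1\notin A$ already have the form $\xi_AI_A$ with $A\subseteq\{2,\dots,n\}$. If $1\in A$, write $A=\{1\}\cup A'$ with $A'\subseteq\{2,\dots,n\}$; then $I_A=I_1I_{A'}=(-1)^{|A'|}I_{A'}I_1$, and using $I=I_1$ together with the centrality of the real coefficients one collects $x=\sum_{A'\subseteq\{2,\dots,n\}}z_{A'}I_{A'}=\sum_{A'\subseteq\{2,\dots,n\}}I_{A'}\widetilde{z_{A'}}$ with $z_{A'}=\xi_{A'}+\xi_{\{1\}\cup A'}I$ and $\widetilde{z_{A'}}=\xi_{A'}+(-1)^{|A'|}\xi_{\{1\}\cup A'}I$, both in $\C_I$. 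Finally, $\C_I=\R+\R I$ is a commutative subalgebra of $\cliff n$, and it is a field since $(a+bI)^{-1}=(a-bI)/(a^2+b^2)$ for $(a,b)\neq(0,0)$; hence restricting the Clifford product to $\C_I$ from the left, respectively from the right, turns $\cliff n$ into a $\C_I$-vector space, and the two displays exhibit $\{I_{A'}:A'\subseteq\{2,\dots,n\}\}$ as a $\C_I$-basis in either case, linear independence over $\C_I$ being inherited from that of $\{I_A\}$ over $\R$. I do not expect any obstacle beyond the basis step, which the automorphism argument settles cleanly.
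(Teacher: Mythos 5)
Your proof is correct. Note, however, that the paper does not actually prove this lemma at all: it simply records the statement and refers to \cite[Proposition~2.2.10]{Colombo:2011}, so there is no in-paper argument to compare against. What your write-up adds is a self-contained justification, and each step checks out: the identity $uv+vu=-2\langle u,v\rangle$ for purely imaginary paravectors correctly reduces the construction of $I_2,\dots,I_n$ to Gram--Schmidt; the universal-property argument (the $I_i$ satisfy $f(v)^2=-|v|^2$ because they are orthonormal, so $e_i\mapsto I_i$ extends to an endomorphism $\varphi$, and the inverse orthogonal map gives $\psi$ with $\psi\circ\varphi=\mathrm{id}$) is the cleanest way to see that the $2^n$ products $I_A$ are $\R$-linearly independent, which is indeed the only non-routine point; and the bookkeeping for the two $\C_I$-expansions, including the sign $(-1)^{|A'|}$ when moving $I_1$ past $I_{A'}$, is right, as is the observation that $\C_I$-linear independence of $\{I_{A'}\}_{A'\subseteq\{2,\dots,n\}}$ follows from $\R$-linear independence of $\{I_A\}_{A\subseteq\{1,\dots,n\}}$ by expanding $z_{A'}=a_{A'}+b_{A'}I$. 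In short, your argument is a valid (and arguably preferable, because self-contained) substitute for the citation the paper relies on.
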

As pointed out before, the theory of slice hyperholomorphic functions allows to develop the spectral theory of operators in two different settings with analogue arguments. We introduce a notation to cover both cases simultaneously.
\begin{definition}
In the quaternionic setting, we set $\F_0:= \H$ and $\F:=\H$. In the Clifford-algebra setting, we denote $\F_0:= \R^{n+1}$ and $\F:= \cliff{n}$.
\end{definition}

\subsection{Slice hyperholomorphic functions} 
The notion of slice hyperholomorphicity is the generalization of holomorphicity to quaternion- resp. $\cliff{n}$-valued functions that underlies the theory of linear operators in these settings. 
We recall the main results on slice hyperholomorphic functions. Their proofs can be found in \cite{Colombo:2010c} and \cite{Colombo:2011}.
 
\begin{definition}
A set $U\subset\F_0$ is called
\begin{enumerate}[(i)]
\item axially symmetric if $[x]\subset U$ for any $x\in U$ and
\item a slice domain if $U$ is open, $U\cap\R\neq 0$ and $U\cap\C_I$ is a domain for any $I\in\ss$.
\end{enumerate}
\end{definition}

\begin{definition}\label{SHol}
Let $U\subset\F_0$ be an axially symmetric open set. A function $f: U\to \F$ is called a left slice function, if it has the form
\begin{equation}\label{LSHol}
 f(x) = \alpha(x_0,x_1) + I_x\beta(x_0,x_1),\quad \forall x = x_0 + I_x x_1 \in U\end{equation}
such that the functions $\alpha$ and $\beta$ satisfy the compatibility condition 
\begin{equation}\label{SymCond}
\alpha(x_0,x_1) = \alpha(x_0,x_1)\qquad \beta(x_0,x_1) = - \beta(x_0,-x_1).
\end{equation}
A real differentiable left slice function $f: U\to\F$ is called left slice hyperholomorphic if $\alpha$ and $\beta$ satisfy the Cauchy-Riemann-differential equations
\begin{equation}\label{CR}\begin{split}
\frac{\partial}{\partial x_0} \alpha(x_0,x_1) &= \frac{\partial}{\partial x_1} \beta(x_0,x_1)\\ 
\frac{\partial}{\partial x_0}\beta(x_0,x_1) &= -\frac{\partial}{\partial x_1}\alpha(x_0,x_1).
\end{split}\end{equation}
A function $f:U\to\F$ is called right slice function if it has the form 
\begin{equation}\label{RSHol}
 f(x) = \alpha(x_0,x_1) + \beta(x_0,x_1)I_x,\quad \forall x = x_0 + I_x x_1 \in U,
 \end{equation}
such that the functions $\alpha$ and $\beta$ satisfy \eqref{SymCond}. If in addition $f$ is real differentiable and $\alpha$ and $\beta$ satisfy \eqref{CR}, then $f$ is called right slice hyperholomorphic.

The sets of left and right slice hyperholomorphic functions on $U$ are denoted by $\lhol(U)$ and $\rhol(U)$, respectively. Finally, we say that a function $f$ is left or right slice hyperholomorphic on a closed axially symmetric set $K$, if there exists an open axially symmetric set $U$ with $K\subset U$ such that $f\in \lhol(U)$ resp. $\rhol(U)$. 
\end{definition}

\begin{corollary}
Let $U\subset\F_0$ be axially symmetric.
\begin{enumerate}[(i)]
\item If $f,g\in\lhol(U)$ and $a\in\F$, then $fa+g\in\lhol(U)$.
\item If $f,g\in\rhol(U)$ and $a\in\F$, then $af + g\in\rhol(U)$. 
\end{enumerate}
\end{corollary}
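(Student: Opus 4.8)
The plan is to check directly from the definition that the pair of coefficient functions attached to $fa+g$ (resp.\ to $af+g$) inherits the compatibility condition \eqref{SymCond}, the Cauchy--Riemann equations \eqref{CR}, and real differentiability from those of $f$ and $g$. The one point requiring attention is that, for a left slice function, the constant $a\in\F$ must be multiplied from the \emph{right}, so that it does not interfere with the imaginary unit $I_x$ sitting to the left of the coefficient $\beta$; for a right slice function the symmetric situation forces $a$ to the left.

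First I would write $f(x) = \alpha(x_0,x_1) + I_x\beta(x_0,x_1)$ and $g(x) = \gamma(x_0,x_1) + I_x\delta(x_0,x_1)$, where $(\alpha,\beta)$ and $(\gamma,\delta)$ are real differentiable $\F$-valued functions satisfying \eqref{SymCond} and \eqref{CR}. Using associativity of the product on $\F$, I would expand
\[(fa+g)(x) = \bigl(\alpha(x_0,x_1)a + \gamma(x_0,x_1)\bigr) + I_x\bigl(\beta(x_0,x_1)a + \delta(x_0,x_1)\bigr),\]
so that $fa+g$ is the left slice function with coefficient pair $\widetilde\alpha := \alpha a + \gamma$ and $\widetilde\beta := \beta a + \delta$.

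Next I would observe that all of the defining conditions are $\R$-linear in the coefficient pair and are compatible with right multiplication by the fixed element $a$: real differentiability of $\widetilde\alpha,\widetilde\beta$ holds because right multiplication by $a$ is an $\R$-linear (hence smooth) self-map of $\F$ and sums of differentiable functions are differentiable; the identity $\partial_t(\alpha a + \gamma) = (\partial_t\alpha)\,a + \partial_t\gamma$ for $t\in\{x_0,x_1\}$ turns \eqref{CR} for $(\alpha,\beta)$ and $(\gamma,\delta)$ into \eqref{CR} for $(\widetilde\alpha,\widetilde\beta)$; and the even/odd symmetry in $x_1$ expressed by \eqref{SymCond} is likewise preserved under right multiplication and addition. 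This establishes (i), and (ii) follows by running the identical computation with $g(x) = \gamma(x_0,x_1) + \delta(x_0,x_1)I_x$ and $a$ multiplied from the left, which gives the coefficient pair $(a\alpha+\gamma,\,a\beta+\delta)$.

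I do not anticipate a genuine obstacle here: the statement is essentially a linearity bookkeeping exercise. The only thing worth emphasizing in the write-up is the side on which $a$ is multiplied, since this is precisely what makes $\lhol(U)$ a right $\F$-module and $\rhol(U)$ a left $\F$-module rather than two-sided modules.
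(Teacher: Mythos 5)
Your proof is correct and is exactly the direct verification the paper implicitly relies on (the corollary is stated among the preliminaries without proof): expand in the form $\alpha+I_x\beta$, note that associativity lets the right factor $a$ pass harmlessly past $I_x$, and check that \eqref{SymCond}, \eqref{CR} and real differentiability are preserved under addition and right (resp.\ left) multiplication by $a$. Your emphasis on which side $a$ must sit — making $\lhol(U)$ a right and $\rhol(U)$ a left $\F$-module — is precisely the point worth recording.
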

On axially symmetric slice domains, slice hyperholomorphic functions can be characterized as those functions that lie in the kernel of a slicewise Cauchy-Riemann-operator. As a consequence, the restriction of a slice hyperholomorphic function to a complex plane can be split into holomorphic components.
\begin{definition}
Let $U\subset\F_0$ be open. For a real differentiable function $f:U\to\F$, we denote $f_I := f|_{\C_I}$ for $I\in\ss$ and define the following differential operators: for $x = x_0 + I_x x_1$ we set
\begin{align*}
\partial_I f(x) &= \frac{\partial}{\partial x_0} f_{I_x}(x) - I_x \frac{\partial}{\partial x_1} f_{I_x}(x) \\
\bpartial_I f(x) &= \frac{\partial}{\partial x_0} f_{I_x}(x) + I_x \frac{\partial}{\partial x_1} f_{I_x}(x)
\end{align*}
and
\begin{align*}
f(x)\rpartial{I} &= \frac{\partial}{\partial x_0} f_I(x) - \frac{\partial}{\partial x_1} f_I(x)I_x\\
 f(x)\rbpartial{I} &= \frac{\partial}{\partial x_0} f_I(x) +  \frac{\partial}{\partial x_1} f_I(x)I_x.
\end{align*}
The arrow $\leftarrow$ indicates that the operators $\rbpartial{I}$ and $\rpartial{I}$ act from the right.
\end{definition}
\begin{corollary}\label{IDeriv}
Let $U\subset\F_0$ be open and axially symmetric.
\begin{enumerate}[(i)]
\item If $f\in\lhol(U)$, then $\bpartial_I f = 0$. If $U$ is an axially symmetric slice domain, then $f\in\lhol(U)$ if and only if $\bpartial_I f = 0$
\item  If $f\in\rhol(U)$, then $f\rbpartial{I} = 0$.  If $U$ is an axially symmetric slice domain, then $f\in\rhol(U)$ if and only if $f \rbpartial{I}  = 0$. 
\end{enumerate}
\end{corollary}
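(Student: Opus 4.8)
The plan is to prove part~(i); part~(ii) is then obtained by the mirror-image argument in which all multiplications are reversed and $\bpartial_I$, the left representation formula and $\lhol$ are replaced by $\rbpartial{I}$, the right representation formula and $\rhol$. The forward implication needs nothing beyond Definition~\ref{SHol}: if $f\in\lhol(U)$, say $f(x)=\alpha(x_0,x_1)+I_x\beta(x_0,x_1)$ with $\alpha,\beta$ as in \eqref{SymCond} and \eqref{CR}, then fixing $I\in\ss$ and a point $x=x_0+I_xx_1\in U$ and differentiating the restriction $f_{I_x}$ in the two real variables gives, using $I_x^2=-1$,
\[
\bpartial_I f(x) = \big(\partial_{x_0}\alpha-\partial_{x_1}\beta\big)+I_x\big(\partial_{x_0}\beta+\partial_{x_1}\alpha\big),
\]
and both brackets vanish by the Cauchy--Riemann equations \eqref{CR}.

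For the converse I would start by assuming, in addition, that $U$ is an axially symmetric slice domain and that $\bpartial_If=0$, i.e.\ that the restriction $f_I$ is holomorphic on $U\cap\C_I$ for every $I\in\ss$. Fix $J\in\ss$ and define $g\colon U\to\F$ by the left representation formula
\[
g(x_0+I_xx_1):=\tfrac{1}{2}\big(f(x_0+Jx_1)+f(x_0-Jx_1)\big)+\tfrac{1}{2}\,I_xJ\big(f(x_0-Jx_1)-f(x_0+Jx_1)\big),
\]
which makes sense precisely because axial symmetry guarantees $x_0\pm Jx_1\in U$ whenever $x_0+I_xx_1\in U$. Writing $g(x_0+I_xx_1)=\alpha(x_0,x_1)+I_x\beta(x_0,x_1)$ with $\alpha=\tfrac{1}{2}(f_J(x_0+Jx_1)+f_J(x_0-Jx_1))$ and $\beta=\tfrac{J}{2}(f_J(x_0-Jx_1)-f_J(x_0+Jx_1))$, one sees at once that $\alpha$ is even and $\beta$ is odd in $x_1$, so $g$ is a left slice function, and that $\alpha,\beta$ inherit the regularity of $f_J$. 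Expressing $\partial_{x_0}\alpha,\partial_{x_1}\alpha,\partial_{x_0}\beta,\partial_{x_1}\beta$ through the complex derivative of the holomorphic function $f_J$ evaluated at $x_0\pm Jx_1$, a short computation — essentially the one above read backwards — yields \eqref{CR} for $\alpha,\beta$, so $g\in\lhol(U)$.

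It remains to identify $g$ with $f$. The representation formula gives $g(x_0)=f(x_0)$ at real points, so $f$ and $g$ agree on $U\cap\R$, which is a nonempty open subset of $\R$ because $U$ is a slice domain. For each fixed $I\in\ss$, both $f_I$ and $g_I$ are holomorphic on the connected open set $U\cap\C_I$ — $g_I$ by $g\in\lhol(U)$ together with the forward implication, and $U\cap\C_I$ being connected by the slice-domain hypothesis. Choosing a generating basis with $I_1=I$ (Lemma~\ref{CliffSplit}, resp.\ \eqref{QuatSplit} in the quaternionic setting), one splits $f_I$ and $g_I$ into $\C_I$-valued holomorphic components and applies the classical identity theorem to conclude $f_I=g_I$. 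Since $I\in\ss$ was arbitrary, $f=g\in\lhol(U)$.

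I expect the only genuine subtlety to lie in the converse: axial symmetry is needed merely so that $g$ is defined on all of $U$, whereas the passage from ``$g$ is a slice hyperholomorphic function agreeing with $f$ on the real axis'' to ``$g=f$'' really uses both the slice-domain hypothesis — connectedness of $U\cap\C_I$, so that the identity theorem applies — and the componentwise splitting of $\F$-valued holomorphic functions into ordinary holomorphic ones. The forward implication, by contrast, is just the one-line computation displayed above. All of this is classical and is carried out in detail in \cite{Colombo:2010c,Colombo:2011}.
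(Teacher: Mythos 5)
Your proof is correct: the forward direction is the one-line computation from Definition~\ref{SHol}, and for the converse you rightly avoid circularity by \emph{defining} $g$ through the right-hand side of the representation formula, verifying \eqref{SymCond} and \eqref{CR} for its components, and then identifying $f=g$ via the Splitting Lemma and the identity theorem on the connected set $U\cap\C_I$, which is exactly where the slice-domain hypothesis enters. The paper does not reprove this corollary but defers to \cite{Colombo:2010c,Colombo:2011}, and its Remark~\ref{History} indicates precisely the route you took (representation formula on axially symmetric slice domains), so your argument matches the intended one.
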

\Cref{IDeriv} states that a function is left resp. right slice hyperholomorphic if any restriction to a complex subplane $\C_I$ is a holomorphic function with values in the left resp. right Banach space $\F$ over $\C_I$. Splitting into components with respect to a chosen basis as in \Cref{CliffSplit} immediately yields the next result.
\begin{lemma}[Splitting Lemma]\label{SplitLem}
Let $U\subset\F_0$ be axially symmetric, let $I\in\ss$ and let $I_2,\ldots,I_n$ be imaginary units that form, together with $I$, a generating basis of $\F$.
\begin{enumerate}[(i)]
\item If $f\in\lhol(U)$, then there exist holomorphic functions $f_A: U\cap\C_I\to\C_I$ such that $ f_I = \sum_{A\subset\{2,\ldots,n\}}f_AI_A$.
\item If $f\in\rhol(U)$, then there exist holomorphic functions $f_A: U\cap\C_I\to\C_I$ such that $ f_I = \sum_{A\subset\{2,\ldots,n\}}I_Af_A$.
\end{enumerate}
\end{lemma}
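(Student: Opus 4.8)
The plan is to reduce the statement to the elementary fact that the composition of a Banach-space-valued holomorphic function with a bounded linear map is again holomorphic, using \Cref{IDeriv} to get holomorphy of the restriction $f_I$ and \Cref{CliffSplit} to produce the linear maps. First I fix a generating basis $\{1,I,I_2,\ldots,I_n\}$ of $\F$ as in the hypothesis (in the quaternionic case $n=2$ and this basis is $\{1,I,J,IJ\}$). By \Cref{CliffSplit}, every $x\in\F$ has a unique expansion $x=\sum_{A\subset\{2,\ldots,n\}}z_A I_A$ with $z_A\in\C_I$, and the coordinate maps $\pi_A\colon\F\to\C_I$, $\pi_A(x)=z_A$, are left $\C_I$-linear; since $\F$ is finite-dimensional they are automatically bounded. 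Dually, the expansion $x=\sum_A I_A\widetilde z_A$ with $\widetilde z_A\in\C_I$ furnishes bounded right $\C_I$-linear coordinate maps $\widetilde\pi_A\colon\F\to\C_I$, $\widetilde\pi_A(x)=\widetilde z_A$.

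For part (i), the key observation is that $f_I=f|_{U\cap\C_I}$ is holomorphic when $\F$ is regarded as a Banach space over $\C_I$ via the left scalar multiplication; this is exactly the content of \Cref{IDeriv}(i) and the discussion following it. Concretely, writing a point of $\C_I$ as $z=x_0+Ix_1$ and observing that the representation $f(z)=\alpha(x_0,x_1)+I\beta(x_0,x_1)$ holds on all of $U\cap\C_I$ (including for $x_1<0$, by the compatibility condition \eqref{SymCond}), the Cauchy--Riemann equations \eqref{CR} yield $\partial_{x_0}f_I+I\,\partial_{x_1}f_I=(\partial_{x_0}\alpha-\partial_{x_1}\beta)+I(\partial_{x_0}\beta+\partial_{x_1}\alpha)=0$, which is the Cauchy--Riemann condition for $\C_I\cong\C$-holomorphy. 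Then I set $f_A:=\pi_A\circ f_I\colon U\cap\C_I\to\C_I$: as the composition of the $\C_I$-holomorphic map $f_I$ with the bounded $\C_I$-linear map $\pi_A$, each $f_A$ is holomorphic, and $f_I=\sum_A f_A I_A$ holds by the definition of the $\pi_A$. Part (ii) is handled in exactly the same way with ``left'' replaced by ``right'': by \Cref{IDeriv}(ii), $f_I$ is holomorphic into $\F$ with its right $\C_I$-module structure, $f_A:=\widetilde\pi_A\circ f_I$ is then holomorphic, and $f_I=\sum_A I_A f_A$.

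I do not expect a genuine obstacle; the argument is essentially a bookkeeping of the reduction ``slice hyperholomorphic $\Rightarrow$ slicewise holomorphic'' already provided by \Cref{IDeriv}. The only two points deserving a line of care are: (a) that the coordinate projections supplied by \Cref{CliffSplit} are linear over $\C_I$ on the correct side and continuous --- immediate from finite-dimensionality; and (b) that holomorphy passes through composition with such a projection, which one can either quote as the standard fact about Banach-space-valued holomorphic functions or check directly, noting that $\pi_A$ commutes with the real partial derivatives $\partial_{x_0},\partial_{x_1}$ and with left multiplication by $I$ (respectively $\widetilde\pi_A$ with right multiplication by $I$), so that the identity $\partial_{x_0}f_I+I\,\partial_{x_1}f_I=0$ descends to $\partial_{x_0}f_A+I\,\partial_{x_1}f_A=0$ for each $A$.
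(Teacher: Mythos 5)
Your proof is correct and follows exactly the route the paper itself indicates: the paper gives no separate proof of the Splitting Lemma but derives it in one line from \Cref{IDeriv} (slicewise holomorphy of $f_I$ into $\F$ as a left resp.\ right $\C_I$-Banach space) combined with the coordinate decomposition of \Cref{CliffSplit}, which is precisely your argument, with the Cauchy--Riemann verification and the linearity/continuity of the projections $\pi_A$, $\widetilde\pi_A$ spelled out.
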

\begin{remark}\label{History}
Originally, slice hyperholomorphic functions were defined as functions that satisfy $\bpartial_I f = 0$ resp. $f\rbpartial{I}  = 0$. In principle this leads to a larger class of functions, but on axially symmetric slice domains both definitions are equivalent. Indeed, in this case the representation formula, cf. \Cref{RepFo}, holds true and allows for a representation of the form \eqref{LSHol} resp. \eqref{RSHol}. Therefore, the theory of slice hyperholomorphicity with the original definition was only developed for functions that are defined on axially symmetric slice domains. 

However, most results on slice hyperholomorphic functions actually require the possibility of a representation of the form \eqref{LSHol} resp. \eqref{RSHol} and not that the function is defined on an axially symmetric slice domain. Hence, \Cref{SHol} seems to be more appropriate since it allows to extend the theory to functions defined on sets that are not connected or do not intersect the real line. This is in particular important in spectral theory, for instance when one wants to define spectral projections.
\end{remark}

\begin{definition}
Let $U\subset\F_0$ be axially symmetric. A left slice hyperholomorphic function $f(x) = \alpha(x_0,x_1) + I_x\beta(x_0,x_1)$ is called intrinsic if $\alpha$ and $\beta$ are real-valued. We denote the set of intrinsic functions on $U$ by $\intrin(U)$.
\end{definition}
Note that intrinsic functions are both left and right slice hyperholomorphic because $\beta(x_0,x_1)$ commutes with the imaginary unit $I_x$. The converse is not true: the constant function $x\mapsto b\in\F\setminus\R$ is left and right slice hyperholomorphic, but it is not intrinsic.
\begin{corollary}\label{IntChar}
Let $f$ be left or right slice hyperholomorphic on $U$. The following statements are equivalent.
\begin{enumerate}[(i)]
\item $f$ is intrinsic.
\item $f(\overline{x}) = \overline{f(x)}$ for all $x\in U$.
\item $f(U\cap\C_I)\subset \C_I$ for all $I\in\ss$.
\end{enumerate}
\end{corollary}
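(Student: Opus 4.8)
The plan is to establish the two equivalences $\mathrm{(i)}\Leftrightarrow\mathrm{(iii)}$ and $\mathrm{(i)}\Leftrightarrow\mathrm{(ii)}$ working directly from the representation in \Cref{SHol} and using two elementary observations. If $x=x_0+I_xx_1$ with $x_1>0$, then the axial symmetry of $U$ forces $x_0+Ix_1\in U$ for \emph{every} $I\in\ss$, and for each such $I$ the defining formula reads $f(x_0+Ix_1)=\alpha(x_0,x_1)+I\beta(x_0,x_1)$ in the left case (resp.\ $\alpha(x_0,x_1)+\beta(x_0,x_1)I$ in the right case); moreover $\overline{x_0+Ix_1}=x_0+(-I)x_1$ with $I_{\overline{x}}=-I$, so $f(\overline{x_0+Ix_1})=\alpha(x_0,x_1)-I\beta(x_0,x_1)$ (resp.\ $\alpha(x_0,x_1)-\beta(x_0,x_1)I$). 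I will carry out the argument in the left slice hyperholomorphic case; the right case is symmetric, and when $\alpha,\beta$ are real valued the two forms coincide because $\beta$ commutes with $I_x$, so the notion of \emph{intrinsic} is unambiguous and a right slice hyperholomorphic intrinsic function is automatically left slice hyperholomorphic.

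For $\mathrm{(i)}\Rightarrow\mathrm{(iii)}$: if $\alpha,\beta$ are real valued and $x=x_0+Ix_1\in U\cap\C_I$, then by the compatibility conditions $f(x)=\alpha(x_0,|x_1|)+I_x\beta(x_0,|x_1|)$ with $I_x\in\{I,-I\}$ and $\alpha(x_0,|x_1|),\beta(x_0,|x_1|)\in\R$, hence $f(x)\in\R+\R I=\C_I$. For $\mathrm{(iii)}\Rightarrow\mathrm{(i)}$: for $x_0+Ix_1\in U$ with $x_1>0$ and arbitrary $I\in\ss$, both $f(x_0+Ix_1)=\alpha(x_0,x_1)+I\beta(x_0,x_1)$ and $f(x_0-Ix_1)=\alpha(x_0,x_1)-I\beta(x_0,x_1)$ lie in $\C_I$, so their half-sum and half-difference give $\alpha(x_0,x_1)\in\C_I$ and $I\beta(x_0,x_1)\in\C_I$, whence $\beta(x_0,x_1)=-I\cdot I\beta(x_0,x_1)\in\C_I$ as well. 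Since this holds for every $I\in\ss$ and $\bigcap_{I\in\ss}\C_I=\R$, the functions $\alpha$ and $\beta$ are real valued, i.e.\ $f$ is intrinsic.

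For $\mathrm{(i)}\Rightarrow\mathrm{(ii)}$: real valued $\alpha,\beta$ commute with every imaginary unit and are fixed by the conjugation, so for $x=x_0+I_xx_1$ with $x_1>0$
\[
f(\overline{x})=\alpha(x_0,x_1)-I_x\beta(x_0,x_1)=\overline{\alpha(x_0,x_1)}-\overline{\beta(x_0,x_1)}\,I_x=\overline{\alpha(x_0,x_1)+I_x\beta(x_0,x_1)}=\overline{f(x)},
\]
and for real $x$ both sides equal $\overline{\alpha(x_0,0)}=\alpha(x_0,0)$. For $\mathrm{(ii)}\Rightarrow\mathrm{(i)}$: fix $(x_0,x_1)$ with $x_1>0$ in the domain of $\alpha,\beta$; for each $J\in\ss$ the point $x_0+Jx_1$ lies in $U$, and comparing $f(x_0-Jx_1)=\alpha(x_0,x_1)-J\beta(x_0,x_1)$ with $f(x_0-Jx_1)=f(\overline{x_0+Jx_1})=\overline{f(x_0+Jx_1)}=\overline{\alpha(x_0,x_1)}-\overline{\beta(x_0,x_1)}\,J$ yields $\alpha(x_0,x_1)-J\beta(x_0,x_1)=\overline{\alpha(x_0,x_1)}-\overline{\beta(x_0,x_1)}\,J$ for all $J\in\ss$. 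Replacing $J$ by $-J$ and adding, resp.\ subtracting, gives $\alpha(x_0,x_1)=\overline{\alpha(x_0,x_1)}$ and $J\beta(x_0,x_1)=\overline{\beta(x_0,x_1)}\,J$ for all $J\in\ss$, and from these identities one reads off that $\alpha(x_0,x_1)$ and $\beta(x_0,x_1)$ are real, so $f$ is intrinsic.

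The step I expect to be the main obstacle is the last one: deducing from $a=\overline{a}$ and $Jb=\overline{b}\,J$ for all $J\in\ss$ that $a,b\in\R$. In the quaternionic setting this is immediate, since $a=\overline{a}$ already forces $\underline{a}=0$, and writing $b=\Re(b)+\underline{b}$ one has $Jb-\overline{b}J=J\underline{b}+\underline{b}J=-2\langle J,\underline{b}\rangle\in\R$, which vanishes for every $J\in\ss$ only when $\underline{b}=0$. In the Clifford setting one must be more careful about which elements of $\cliff{n}$ are fixed by the conjugation and about the action of left multiplication by paravectors, and this is the delicate point of the proof, the only place where more than the bare representation formula enters.
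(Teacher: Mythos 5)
Your route through the representation $f(x)=\alpha(x_0,x_1)+I_x\beta(x_0,x_1)$ is the standard one; the paper itself does not prove \Cref{IntChar} but quotes it from the literature, so there is no internal proof to compare against. In the quaternionic setting your argument is complete and correct: the equivalence (i)$\Leftrightarrow$(iii) via $\bigcap_{I\in\ss}\C_I=\R$, the implication (i)$\Rightarrow$(ii) using that real scalars are central and fixed by conjugation, and the closing identities $a=\overline{a}\Rightarrow a\in\R$ and $Jb-\overline{b}J=J\underline{b}+\underline{b}J=-2\langle J,\underline{b}\rangle$ are all right. The parts (i)$\Leftrightarrow$(iii) and (i)$\Rightarrow$(ii) also go through verbatim in the Clifford setting for $n\geq 2$.

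The step you flag at the end is, however, not merely delicate: it is false, so the gap cannot be closed as the statement stands. The paper's Clifford conjugation acts on a basis element $e_A$ of grade $k$ by $\overline{e_A}=(-1)^{k(k+1)/2}e_A$, hence it fixes every element of grade $k\equiv 0,3\pmod 4$, not only the scalars. Concretely, for $n\geq 3$ one checks $\overline{e_1e_2e_3}=-e_3e_2e_1=e_1e_2e_3$, so the constant function $f\equiv e_1e_2e_3$ (which is left and right slice hyperholomorphic with $\alpha\equiv e_1e_2e_3$ and $\beta\equiv 0$) satisfies $f(\overline{x})=\overline{f(x)}$ on all of $U$ without being intrinsic; it of course violates (iii). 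Thus $a=\overline{a}$ does not force $a\in\R$ in $\cliff{n}$ for $n\geq 3$, and condition (ii) is strictly weaker than (i) there; the three conditions are equivalent as stated only in the quaternionic case (and in $\cliff{1},\cliff{2}$, which are isomorphic to $\C$ and $\H$). The robust characterization of intrinsic functions in the Clifford setting is (iii), equivalently the reality of $\alpha$ and $\beta$, and that is the form in which the cited references use it; if you want to salvage (ii) for general $n$ you must either restrict to the quaternionic case or add a hypothesis forcing the values of $\alpha$ into the paravector space. Apart from this, your proof is correct.
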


The importance of the class of intrinsic functions is due to the fact that multiplication and composition with intrinsic functions preserve slice hyperholomorphicity, which is not true for arbitrary slice hyperholomorphic functions.
\begin{corollary} Let $U\subset\F_0$ be axially symmetric.
\begin{enumerate}[(i)]
\item If $f\in\intrin(U)$ and $g\in\lhol(U)$, then $fg\in\lhol(U)$. If $f\in\rhol(U)$ and $g\in\intrin(U)$, then $fg\in\rhol(U)$.
\item If $g\in\intrin(U)$ and $f\in\lhol(g(U))$, then $f\circ g\in \lhol(U)$. If $g\in\intrin(U)$ and $f\in\rhol(g(U))$, then $f\circ g\in \rhol(U)$.
\end{enumerate}
\end{corollary}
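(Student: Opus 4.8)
The plan is to prove both parts by a direct computation with the $\alpha$--$\beta$ representation \eqref{LSHol} of slice functions. Since $U$ is only assumed axially symmetric and need not be a slice domain, \Cref{IDeriv} does not let one recover membership in $\lhol(U)$ merely from the vanishing of $\bpartial_I$, so I would work with the explicit form throughout rather than with the Cauchy--Riemann operators. The computations split into a multiplication part and a composition part, and in each the only real content is a parity check for \eqref{SymCond} and a substitution of the Cauchy--Riemann relations \eqref{CR}.

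For the first assertion of (i), I would write $f(x)=a(x_0,x_1)+I_xb(x_0,x_1)$ with $a,b$ real-valued (the definition of intrinsic) and $g(x)=\gamma(x_0,x_1)+I_x\delta(x_0,x_1)$ with $\gamma,\delta$ taking values in $\F$. Because $a,b$ are real they commute with $I_x$ and $I_x^2=-1$, so multiplying out gives
\[ f(x)g(x)=\bigl(a\gamma-b\delta\bigr)+I_x\bigl(a\delta+b\gamma\bigr), \]
which exhibits $fg$ as a left slice function with components $\widetilde\alpha=a\gamma-b\delta$ and $\widetilde\beta=a\delta+b\gamma$. It then remains to check \eqref{SymCond} and \eqref{CR} for $(\widetilde\alpha,\widetilde\beta)$: the even/odd parities follow from those of $a,b$ and of $\gamma,\delta$ by bookkeeping, and inserting the Cauchy--Riemann relations for $(a,b)$ and for $(\gamma,\delta)$ into $\partial_{x_0}\widetilde\alpha$, $\partial_{x_1}\widetilde\beta$ (and into the companion pair $\partial_{x_0}\widetilde\beta$, $-\partial_{x_1}\widetilde\alpha$) makes the two sides coincide; real differentiability is inherited from $a,b,\gamma,\delta$. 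The case $f\in\rhol(U)$, $g\in\intrin(U)$ is the mirror image: write $f(x)=\alpha+\beta I_x$ and $g(x)=\gamma+I_x\delta$ with $\gamma,\delta$ real, pull the scalars through $I_x$, and obtain $fg=(\alpha\gamma-\beta\delta)+(\alpha\delta+\beta\gamma)I_x$, after which the same parity and Cauchy--Riemann checks apply.

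For (ii) I would first note that $g\in\intrin(U)$ gives $g([x])=[g(x)]$ (since in $g(x_0+Ix_1)=\gamma(x_0,x_1)+I\delta(x_0,x_1)$ the functions $\gamma,\delta$ do not depend on $I$), so $g(U)$ is axially symmetric and $f\in\lhol(g(U))$ is meaningful. Writing $g(x)=\gamma(x_0,x_1)+I_x\delta(x_0,x_1)$ with $\gamma,\delta$ real and $f(y)=A(y_0,y_1)+I_yB(y_0,y_1)$, the crux is to evaluate $f$ at $g(x)=\gamma+I_x\delta$: if $\delta>0$, then $(\gamma,\delta)$ and $I_x$ are the spherical data of $g(x)$ and $f(g(x))=A(\gamma,\delta)+I_xB(\gamma,\delta)$; if $\delta<0$ one must instead use the imaginary unit $-I_x$ and coordinates $(\gamma,-\delta)$, but the relations $A(\gamma,-\delta)=A(\gamma,\delta)$, $B(\gamma,-\delta)=-B(\gamma,\delta)$ from \eqref{SymCond} restore the same expression, and $\delta=0$ is handled likewise using $B(\cdot,0)=0$. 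Hence
\[ (f\circ g)(x)=A\bigl(\gamma(x_0,x_1),\delta(x_0,x_1)\bigr)+I_x\,B\bigl(\gamma(x_0,x_1),\delta(x_0,x_1)\bigr), \]
a left slice function whose symmetry follows from that of $\gamma,\delta$ and of $A,B$; the chain rule together with the Cauchy--Riemann relations for $(A,B)$ and for $(\gamma,\delta)$ then yields \eqref{CR} for the composed components. The right slice case is identical with $I_x$ acting from the right.

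I expect the only delicate point to be the case distinction on the sign of $\delta$ in part (ii): it is exactly there that the compatibility condition \eqref{SymCond} on $A$ and $B$ is needed, to guarantee that $f\circ g$ is well defined and of slice type no matter which half of the plane $\C_{I_x}$ the value $g(x)$ falls into. Everything else reduces to routine parity checking and a single application of the chain rule.
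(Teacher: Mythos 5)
Your argument is correct and complete: the paper states this corollary without proof (it is recalled from \cite{Colombo:2010c} and \cite{Colombo:2011}), and your direct computation with the $\alpha$--$\beta$ representation is exactly the standard argument used there. You correctly isolate the two genuinely nontrivial points — that only the \emph{real-valued} components need to be commuted past $I_x$ in the product, and the sign case distinction on $\delta$ in the composition, where the odd symmetry of $B$ from \eqref{SymCond} is what makes $f\circ g$ well defined as a slice function — and the parity and chain-rule/Cauchy--Riemann verifications go through as you describe.
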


Important examples of slice hyperholomorphic functions are power series with coefficients in $\F$: series of the form $\sum_{n=0}^{+\infty}x^na_n$ are left slice hyperholomorphic and series of the form $\sum_{n=0}^{\infty} a_nx^n$ are right slice hyperholomorphic on their domain of convergence. A power series is intrinsic if and only if its coefficients are real.

Conversely, any slice hyperholomorphic function can be expanded into a power series of the respective type, but only at real points.
\begin{definition}
The slice-derivative of a function $f\in\lhol(U)$ is defined as
\[\sderiv  f(x) = \lim_{\C_{I_x}\ni s\to x} (s-x)^{-1}(f(s)-f(x)), \]
where $\lim_{\C_{i_x}\ni s\to x} g(s)$ denotes the limit as $s$ tends to $x = x_0 + I_xx_1 \in U$ in $\C_{I_x}$.
The slice-derivative of a function $f\in\rhol(U)$ is defined as
\[  f\rsderiv(x) = \lim_{\C_{I_x}\ni s\to x} (f(s)-f(x))(s-x)^{-1}. \]
\end{definition}

\begin{corollary}
The slice derivative of a left (or right) slice hyperholomorphic function is again left (or right) slice hyperholomorphic. Moreover, it coincides with the derivative with respect to the real part, that is
\[\sderiv  f(x) = \frac{\partial}{\partial x_0} f(x) \quad \text{resp.}\quad f\rsderiv(x) = \frac{\partial}{\partial x_0} f(x). \]
\end{corollary}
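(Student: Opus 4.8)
The plan is to reduce the $\F$-valued difference quotient in the definition of the slice derivative to ordinary complex difference quotients on a single plane $\C_I$ by means of the Splitting Lemma, to identify the resulting limit with $\frac{\partial}{\partial x_0}f$, and then to check directly that this function again meets \Cref{SHol}. It is enough to treat the left slice hyperholomorphic case; the right one follows after reversing the order of all products. So fix $f(x) = \alpha(x_0,x_1) + I_x\beta(x_0,x_1)\in\lhol(U)$ and a point $x = x_0 + I_xx_1\in U$.

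First I would take $x$ non-real, so that $I_x\in\ss$ is uniquely determined and $U\cap\C_{I_x}$ is an open neighbourhood of $x$ in $\C_{I_x}$. Choosing a generating basis $I_1 = I_x,I_2,\ldots,I_n$ of $\F$, \Cref{SplitLem} provides holomorphic $f_A\colon U\cap\C_{I_x}\to\C_{I_x}$ with $f|_{\C_{I_x}} = \sum_{A\subset\{2,\ldots,n\}}f_AI_A$. Since $\C_{I_x}$ is a commutative field and the $I_A$ are constant, the difference quotient $(s-x)^{-1}(f(s)-f(x))$ equals $\sum_A\bigl[(s-x)^{-1}(f_A(s)-f_A(x))\bigr]I_A$, which tends, as $s\to x$ in $\C_{I_x}$, to $\sum_A f_A'(x)I_A$; for a holomorphic $f_A$ one has $f_A'(x)=\frac{\partial}{\partial x_0}f_A(x)$, so the limit exists and equals $\frac{\partial}{\partial x_0}f(x)=\frac{\partial}{\partial x_0}\alpha(x_0,x_1)+I_x\frac{\partial}{\partial x_0}\beta(x_0,x_1)$. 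Along the way this also shows that each $f_A$, and hence $\alpha$ and $\beta$, is real analytic, because $\alpha$ and $I_x\beta$ are the even and odd parts in $x_1$ of the real analytic function $f|_{\C_{I_x}}$. For a real point $x = x_0$ the same computation works on an arbitrary plane $\C_I$, $I\in\ss$, and the only thing to verify is that the limit does not depend on $I$: the compatibility condition \eqref{SymCond} forces $\beta(x_0,0)=-\beta(x_0,0)$, hence $\beta(\,\cdot\,,0)\equiv0$ and $\frac{\partial}{\partial x_0}\beta(x_0,0)=0$, so the limit equals $\frac{\partial}{\partial x_0}\alpha(x_0,0)$, which does not involve $I$. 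Thus $\sderiv f=\frac{\partial}{\partial x_0}f$ on all of $U$, which settles the second assertion.

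It then remains to see that $g:=\frac{\partial}{\partial x_0}f$ lies again in $\lhol(U)$. Writing $g=\widetilde\alpha+I_x\widetilde\beta$ with $\widetilde\alpha:=\frac{\partial}{\partial x_0}\alpha$ and $\widetilde\beta:=\frac{\partial}{\partial x_0}\beta$, differentiating \eqref{SymCond} in $x_0$ shows that $\widetilde\alpha,\widetilde\beta$ again satisfy \eqref{SymCond}, so $g$ is a left slice function, and it is real differentiable since $\alpha,\beta$ are real analytic. Finally, the smoothness of $\alpha,\beta$ allows me to interchange $\partial_{x_0}$ and $\partial_{x_1}$, so that \eqref{CR} for $\alpha,\beta$ gives $\partial_{x_0}\widetilde\alpha=\partial_{x_0}^2\alpha=\partial_{x_1}\partial_{x_0}\beta=\partial_{x_1}\widetilde\beta$ and $\partial_{x_0}\widetilde\beta=\partial_{x_0}^2\beta=-\partial_{x_1}\partial_{x_0}\alpha=-\partial_{x_1}\widetilde\alpha$, i.e.\ \eqref{CR} holds for $\widetilde\alpha,\widetilde\beta$ and $g\in\lhol(U)$. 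I expect the main obstacle to be the first step: justifying via \Cref{SplitLem} that the vector-valued difference quotient genuinely splits into finitely many scalar complex difference quotients (so that the limit exists and equals $\frac{\partial}{\partial x_0}f$), together with the treatment of real points, where the ambiguity of $I_x$ must be absorbed by the vanishing of $\beta$ on the real axis. The verification that $g$ is slice hyperholomorphic is then the routine differentiation of \eqref{SymCond} and \eqref{CR}.
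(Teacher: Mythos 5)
Your argument is correct. The paper states this corollary as a recalled preliminary (its proof is deferred to the cited literature), so there is no in-text proof to compare against; your route through the Splitting Lemma --- reducing the $\F$-valued difference quotient on $\C_{I_x}$ to finitely many scalar holomorphic difference quotients, handling real points via the forced vanishing of $\beta$ on the real axis, and then differentiating \eqref{SymCond} and \eqref{CR} to see that $\frac{\partial}{\partial x_0}f$ is again a slice hyperholomorphic function --- is the standard one and all steps check out.
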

\begin{theorem}
If $f$ is left slice hyperholomorphic on the ball $B_r(\alpha)$ with radius $r$ centered at $\alpha\in\R$, then
\[f(x) = \sum_{n=0}^{+\infty} (x-\alpha)^n \frac{1}{n!}\sderiv ^n f(\alpha)\quad\text{for }x\in B_r(\alpha).\]
If $f$ is right slice hyperholomorphic on $B_r(\alpha)$, then
\[f(x) = \sum_{n=0}^{+\infty}\frac{1}{n!} f\rsderiv ^n(\alpha) (x-\alpha)^n \quad\text{for }x\in B_r(\alpha).\]
\end{theorem}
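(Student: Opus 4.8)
The plan is to reduce the statement to the classical one-variable Taylor expansion by restricting $f$ to a complex plane and invoking the Splitting Lemma. I will carry out the left slice hyperholomorphic case in detail; the right case is completely analogous, with left and right multiplication interchanged.

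First I would fix an imaginary unit $I\in\ss$ and imaginary units $I_2,\dots,I_n$ completing it to a generating basis of $\F$, and note that $B_r(\alpha)$ is an axially symmetric slice domain since $\alpha$ is real. By \Cref{SplitLem} there are holomorphic functions $f_A\colon B_r(\alpha)\cap\C_I\to\C_I$, $A\subset\{2,\dots,n\}$, with $f_I=\sum_A f_A I_A$. Because $B_r(\alpha)\cap\C_I$ is the disc of radius $r$ centred at the real point $\alpha$ in $\C_I\cong\C$, classical complex analysis gives $f_A(z)=\sum_{k\ge 0}(z-\alpha)^k\frac{1}{k!}f_A^{(k)}(\alpha)$ on that disc, with locally uniform convergence. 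Next I would observe that on $\C_I$ the difference quotient defining $\sderiv f$ coincides, by commutativity of $\C_I$, with the componentwise complex difference quotient, so that $\sderiv^k f|_{\C_I}=\sum_A f_A^{(k)}I_A$; evaluating at the real point $\alpha$ gives $\sderiv^k f(\alpha)=\sum_A f_A^{(k)}(\alpha)I_A$, and recombining the expansions componentwise (legitimate since $(z-\alpha)^k$ and $\frac{1}{k!}f_A^{(k)}(\alpha)$ all lie in the commutative field $\C_I$) yields
\[ f_I(z)=\sum_{k=0}^{+\infty}(z-\alpha)^k\frac{1}{k!}\sderiv^k f(\alpha),\qquad z\in B_r(\alpha)\cap\C_I. \]

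Then I would set $c_k:=\frac{1}{k!}\sderiv^k f(\alpha)$ and $g(x):=\sum_{k\ge 0}(x-\alpha)^k c_k$ and check that this series converges on the whole ball: applying the Cauchy--Hadamard formula to each of the finitely many $f_A$ on the disc of radius $r$ gives $\limsup_k|c_k|^{1/k}\le 1/r$, and since $|(x-\alpha)^k|=|x-\alpha|^k$ for the paravector $x-\alpha$ and the multiplication on $\F$ is bounded, the series for $g$ converges absolutely whenever $|x-\alpha|<r$. Hence $g$ is left slice hyperholomorphic on $B_r(\alpha)$, being a power series in $x-\alpha$. Finally I would conclude $f=g$ on all of $B_r(\alpha)$: both are left slice functions on this axially symmetric set and, by the displayed identity and the definition of $g$, they agree on $B_r(\alpha)\cap\C_I$; but a left slice function is determined on an axially symmetric set by its restriction to a single plane $\C_I$, since for $x=x_0+I_x x_1\in B_r(\alpha)$ the points $x_0\pm Ix_1$ lie in $B_r(\alpha)\cap\C_I$ and the compatibility condition \eqref{SymCond} recovers the components $a,b$ in $f(x)=a(x_0,x_1)+I_x b(x_0,x_1)$ from the two values $f_I(x_0\pm Ix_1)$. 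Applying this to $f-g$ gives the claim. The right case runs verbatim with the splitting $f_I=\sum_A I_A f_A$ and the right slice derivative, producing $f\rsderiv^k(\alpha)=\sum_A I_A f_A^{(k)}(\alpha)$ and the series $\sum_k\frac{1}{k!}f\rsderiv^k(\alpha)(x-\alpha)^k$.

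I expect the only mildly delicate point, which is more bookkeeping than a genuine obstacle, to be the identity $\sderiv^k f(\alpha)=\sum_A f_A^{(k)}(\alpha)I_A$, i.e. the compatibility of the iterated slice derivative with the componentwise splitting; this rests on the commutativity of $\C_I$ together with the fact that $\sderiv f=\partial_{x_0}f$. All remaining steps are the classical one-variable theory plus the elementary structure of left (resp. right) slice functions.
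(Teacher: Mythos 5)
Your argument is correct, and it is essentially the standard proof of this result (the paper itself only cites it from the literature rather than proving it): restrict to a plane $\C_I$, split into holomorphic components, apply the classical Taylor expansion, identify $\sderiv^k f(\alpha)$ with the componentwise derivatives via $\sderiv f=\partial_{x_0}f$, and propagate the identity to the whole axially symmetric ball by the Representation Formula. The delicate points you flag (commutativity within $\C_I$ for the iterated slice derivative, and the radius-of-convergence estimate using $|(x-\alpha)^k|=|x-\alpha|^k$ for paravectors) are handled correctly.
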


As a consequence of the slice structure of slice hyperholomorphic functions, their values are uniquely determined by their values on one complex plane. Consequently, any function that is holomorphic on a suitable subset of a complex plane possesses a unique slice hyperholomorphic extension.
\begin{theorem}[Representation Formula]\label{RepFo}
Let $U\subset\F_0$ be axially symmetric and let $I\in\ss$. For any $x = x_0 + I_x x_1\in U$ set $x_I := x_0 + Ix_1$. If $f\in\lhol(U)$. Then
\[f(x) = \frac12(1-I_xI)f(x_I) + \frac12(1+I_xI)f(\overline{x_I}).\]
If $f\in\rhol(U)$, then
\[f(x) = f(x_I)(1-II_x)\frac12 + f(\overline{x_I})(1+II_x)\frac12.\]
\end{theorem}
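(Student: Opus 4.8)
The plan is to verify the identity by a direct computation: insert the defining form of a slice function and use only the compatibility condition~\eqref{SymCond}. The Cauchy--Riemann equations play no role here, so the same argument in fact proves the formula for arbitrary left (resp.\ right) slice functions, not only for slice hyperholomorphic ones.

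First I would note that the right-hand side is meaningful: for $x = x_0 + I_x x_1 \in U$ both points $x_I = x_0 + Ix_1$ and $\overline{x_I} = x_0 - Ix_1$ belong to the sphere $[x]$, which lies in $U$ by axial symmetry, so $f(x_I)$ and $f(\overline{x_I})$ are defined. Abbreviating $\alpha := \alpha(x_0,x_1)$ and $\beta := \beta(x_0,x_1)$, the representation~\eqref{LSHol} evaluated along $\C_I$ gives $f(x_I) = \alpha + I\beta$; evaluating it at $\overline{x_I} = x_0 + I(-x_1)$ and using that~\eqref{SymCond} makes $\alpha$ even and $\beta$ odd in the second variable gives $f(\overline{x_I}) = \alpha - I\beta$.

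Next I would substitute these into the claimed right-hand side,
\[
\tfrac12(1 - I_xI)(\alpha + I\beta) + \tfrac12(1 + I_xI)(\alpha - I\beta),
\]
expand using associativity in $\F$ and $I^2 = -1$ (so that, for instance, $I_xI\cdot I\beta = -I_x\beta$), and collect terms: the contributions $\pm I\beta$ cancel against each other, the contributions $\mp I_xI\alpha$ cancel, and what survives is $\alpha + I_x\beta$, which is precisely $f(x)$. The case $x_1 = 0$ is covered by the same computation, since then $x_I = \overline{x_I} = x_0$ and the two scalar coefficients $\tfrac12(1 \mp I_xI)$ add up to $1$ regardless of the (arbitrary) choice of $I_x$. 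For the right slice hyperholomorphic case I would repeat the computation starting from~\eqref{RSHol}, where $f(x_I) = \alpha + \beta I$ and $f(\overline{x_I}) = \alpha - \beta I$, and
\[
f(x_I)(1 - II_x)\tfrac12 + f(\overline{x_I})(1 + II_x)\tfrac12
\]
collapses, after using $\beta I \cdot I I_x = -\beta I_x$, to $\alpha + \beta I_x = f(x)$.

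Since the whole argument is elementary algebra in $\F$, I do not expect a genuine obstacle. The only points demanding a little care are the sign bookkeeping when passing from $x$ to $\overline{x_I}$ — this is exactly where~\eqref{SymCond} and the convention $x_1 = |\underline{x}| \ge 0$ enter — and the observation that the proof really uses the slice-function structure~\eqref{LSHol}/\eqref{RSHol} and not merely $\bpartial_I f = 0$; on a set that fails to be a slice domain these two conditions are no longer equivalent, which is precisely why \Cref{SHol} is phrased the way it is.
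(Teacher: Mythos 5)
Your computation is correct, and it is essentially the standard argument for this statement under the slice-function definition \Cref{SHol} adopted in this paper; the paper itself states the Representation Formula as a recalled preliminary and defers the proof to the cited literature rather than proving it in the text. Your observation that only the compatibility condition \eqref{SymCond} is used (so the formula holds for arbitrary slice functions, with the Cauchy--Riemann equations playing no role) is accurate and is precisely why, as \Cref{History} notes, this definition extends the theory beyond axially symmetric slice domains, where the older kernel-of-$\bpartial_I$ definition would require a separate identity-type argument.
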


\begin{corollary}\label{extLem}
Let $I\in\ss$ and let $f:O\to\F$ be real differentiable, where $O$ is a domain in $\C_I$ that is symmetric with respect to the real axis.
\begin{enumerate}[(i)]
\item The axially symmetric hull $[O]: = \bigcup_{z\in O}[z]$ of $O$ is an axially symmetric slice domain.
\item If $f$ satisfies $\frac{\partial}{\partial x_0}f + I \frac{\partial}{\partial x_1} f= 0$, then there exists a unique left slice hyperholomorphic extension $\ext_L(f)$ of $f$ to $[O]$.
\item If $f$ satisfies $\frac{\partial}{\partial x_0}f +  \frac{\partial}{\partial x_1}fI = 0$, then there exists a unique right slice hyperholomorphic extension $\ext_R(f)$ of $f$ to $[O]$.
\end{enumerate}
\end{corollary}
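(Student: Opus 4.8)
For part (i) I would argue as follows. By construction $[O]$ is axially symmetric, so it remains to check that it is open, that it meets $\R$, and that $[O]\cap\C_J$ is connected for every $J\in\ss$. For openness, given $x=x_0+I_xx_1\in[O]$ with $x_1=|\underline{x}|\geq0$, the sphere $[x]$ meets $O$, so by conjugation-symmetry of $O$ we get $z:=x_0+Ix_1\in O$ and a disc $B_r(z)\subset O$; since $|\Re(y)-\Re(x)|\leq|y-x|$ and $\bigl||\underline{y}|-|\underline{x}|\bigr|\leq|y-x|$ for all $y\in\F_0$, every $y$ with $|y-x|<r/\sqrt2$ satisfies $\Re(y)+I|\underline{y}|\in B_r(z)\subset O$, hence $y\in[O]$. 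For $J\in\ss$ the bijective $\R$-linear isometry $\psi_J\colon\C_J\to\C_I$, $x_0+Jx_1\mapsto x_0+Ix_1$, satisfies $[O]\cap\C_J=\psi_J^{-1}(O)$ (for $x_1<0$ one uses the conjugation-symmetry of $O$), so $[O]\cap\C_J$ is homeomorphic to the domain $O$, hence a domain. Finally, a nonempty open connected conjugation-invariant set $O\subset\C_I$ must meet $\R$, so $[O]\cap\R=O\cap\R\neq\emptyset$; as the slices $[O]\cap\C_J$ all contain these real points, $[O]$ is connected. Thus $[O]$ is an axially symmetric slice domain.

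For uniqueness in (ii), note that if $g\in\lhol([O])$ extends $f$ then, for $x=x_0+I_xx_1\in[O]$, the points $x_I:=x_0+Ix_1$ and $\overline{x_I}$ lie in $[O]\cap\C_I=O$, so the Representation Formula (\Cref{RepFo}) forces
\[ g(x)=\tfrac12(1-I_xI)f(x_I)+\tfrac12(1+I_xI)f(\overline{x_I}); \]
I would then take this right-hand side as the \emph{definition} of $\ext_L(f)$ on $[O]$. To prove it does the job I would verify, in order: (a) at a real point $x_0$ the value is independent of the arbitrary $I_x$, both summands equalling $\tfrac12f(x_0)$; (b) $\ext_L(f)|_O=f$, by inserting $I_x=\pm I$ and using $I^2=-1$; (c) on the conjugation-symmetric open set $\Omega:=\{(x_0,x_1)\in\R^2:x_0+Ix_1\in O\}$, with $\alpha(x_0,x_1):=\tfrac12[f(x_0+Ix_1)+f(x_0-Ix_1)]$ and $\beta(x_0,x_1):=\tfrac{I}{2}[f(x_0-Ix_1)-f(x_0+Ix_1)]$, one has $\ext_L(f)(x)=\alpha(x_0,x_1)+I_x\beta(x_0,x_1)$ with $\alpha$ even and $\beta$ odd in $x_1$, so \eqref{SymCond} holds and $\ext_L(f)$ is a left slice function; (d) $\ext_L(f)$ is real differentiable (discussed below); (e) the Cauchy--Riemann system \eqref{CR} for $\alpha,\beta$ follows from the hypothesis $\partial_{x_0}f+I\partial_{x_1}f=0$, since e.g.
\[ \partial_{x_0}\alpha-\partial_{x_1}\beta=\tfrac12\bigl[(\partial_{x_0}f+I\partial_{x_1}f)(x_0+Ix_1)+(\partial_{x_0}f+I\partial_{x_1}f)(x_0-Ix_1)\bigr]=0, \]
and $\partial_{x_0}\beta+\partial_{x_1}\alpha=0$ because the hypothesis also yields $\partial_{x_1}f-I\partial_{x_0}f=0$. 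By \Cref{SHol} these steps give $\ext_L(f)\in\lhol([O])$, as required. Part (iii) is proved in exactly the same way, using the right slice hyperholomorphic half of \Cref{RepFo} and the hypothesis $\partial_{x_0}f+(\partial_{x_1}f)I=0$, with $\ext_R(f)(x):=f(x_I)(1-II_x)\tfrac12+f(\overline{x_I})(1+II_x)\tfrac12$.

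The step I expect to be the main obstacle is (d). Away from $\R$ it is immediate: the chart $x\mapsto(\Re x,|\underline{x}|,I_x)$ is real analytic there and $\alpha,\beta$ are real differentiable, so $\ext_L(f)$ is real differentiable off $\R$. At points of $[O]\cap\R$ this chart degenerates, and one has to exploit the parity of $\alpha$ and $\beta$ in $x_1$: $\alpha$ is even, and $\beta$ vanishes to first order in $x_1$, so writing $\beta(x_0,x_1)=x_1\widetilde\beta(x_0,x_1)$ with $\widetilde\beta$ continuous gives $I_x\beta(x_0,x_1)=\underline{x}\,\widetilde\beta(\Re x,|\underline{x}|)$, and a Hadamard-type estimate then yields differentiability of $\ext_L(f)$ at real points. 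I would refer to \cite{Colombo:2010c, Colombo:2011} for the details of this purely technical point.
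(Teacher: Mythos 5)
This corollary is one of the preliminary results the paper does not prove itself but quotes from the literature (\cite{Colombo:2010c,Colombo:2011}), so there is no in-paper argument to compare against; your proof is the standard one via the Representation Formula (\Cref{RepFo}), taking the right-hand side of that formula as the definition of the extension, and all of your verifications (well-definedness at real points, restriction to $O$, the parity conditions \eqref{SymCond}, and the Cauchy--Riemann system \eqref{CR}) are correct. The one delicate step you flag --- real differentiability of $\alpha(x_0,x_1)+I_x\beta(x_0,x_1)$ across the real axis, where the chart $x\mapsto(x_0,x_1,I_x)$ degenerates --- is indeed the only technical point, and your parity/Hadamard remedy is exactly how the cited references handle it (note that $f$ is automatically real analytic on $O$ by the Splitting Lemma, so $\widetilde\beta$ inherits enough regularity for the argument to close).
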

\begin{remark}
If $f$ has a left and a right slice hyperholomorphic extension, they do not necessarily coincide. Consider for instance the function $z\mapsto bz$ on $\C_I$ with a constant $b\in\C_I\setminus\R$. Its left slice hyperholomorphic extension to $\F_0$ is $x\mapsto xb$, but its right
slice hyperholomorphic extension is $x\mapsto bx$.
\end{remark}
Finally, slice hy\-per\-ho\-lo\-mor\-phic functions satisfy an adapted version of Cauchy's integral theorem and a Cauchy-type integral formula with a modified kernel. 

\begin{definition}
We define the  left slice hyperholomorphic Cauchy kernel as
\[S_L^{-1}(s,x) = -(x^2-2\Re(s)x + |s|^2)^{-1}(x-\overline{s})\quad\text{for }x\in\F_0\setminus[s]\]
and the right slice hyperholomorphic Cauchy kernel as
\[S_R^{-1}(s,x) = -(x-\overline{s})(x^2-2\Re(s)x + |s|^2)^{-1}\quad\text{for }x\in\F_0\setminus[s].\]
\end{definition}
\begin{corollary}\label{SProps}
The left slice hyperholomorphic Cauchy-kernel $S_L^{-1}(s,x)$ is left slice hyperholomorphic in the variable $x$ and right slice hy\-per\-ho\-lo\-mor\-phic in the variable $s$ on its domain of definition. Moreover, we have \begin{equation}\label{SID}
S_R^{-1}(s,x) = - S_L^{-1}(x,s).
\end{equation}
\end{corollary}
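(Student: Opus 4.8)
The plan is to derive all three assertions from the factorisation of the $S$-kernels together with the product rule for intrinsic functions, after first isolating one auxiliary representation of $S_L^{-1}$. Throughout put $\Q_s(x):=x^2-2\Re(s)x+|s|^2$, so that by definition $S_L^{-1}(s,x)=-\Q_s(x)^{-1}(x-\overline s)$ and $S_R^{-1}(s,x)=-(x-\overline s)\Q_s(x)^{-1}$ for $x\in\F_0\setminus[s]$. The first point I would record is that, writing $x=x_0+I_xx_1$,
\[\Q_s(x)=\bigl((x_0-\Re(s))^2+|\underline s|^2-x_1^2\bigr)+2x_1(x_0-\Re(s))\,I_x\in\C_{I_x},\]
so that $\Q_s(x)$ is invertible precisely for $x\notin[s]$, with inverse again lying in $\C_{I_x}$. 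Since $\Q_s$ is a polynomial in $x$ with real coefficients it is intrinsic, and the reciprocal of a nowhere-vanishing intrinsic function is again intrinsic and slice hyperholomorphic: on each plane $\C_I$ it restricts to the reciprocal of a non-vanishing holomorphic $\C_I$-valued function, so this follows from \Cref{IDeriv} and \Cref{IntChar} applied on the axially symmetric slice domain $\F_0\setminus[s]$. Hence $\Q_s(\cdot)^{-1}\in\intrin(\F_0\setminus[s])$ and, exchanging the roles of the two variables, $\Q_x(\cdot)^{-1}\in\intrin(\F_0\setminus[x])$, where $\Q_x(s):=s^2-2\Re(x)s+|x|^2$.

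Left slice hyperholomorphy in $x$ is then immediate: the polynomial $x\mapsto x-\overline s=x\cdot1+(-\overline s)$ is left slice hyperholomorphic, so $S_L^{-1}(s,x)=-\Q_s(x)^{-1}(x-\overline s)$ is the product of an intrinsic function with a left slice hyperholomorphic one and therefore belongs to $\lhol(\F_0\setminus[s])$.

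For the dependence on $s$ I would first prove the alternative representation $S_L^{-1}(s,x)=(s-\overline x)\,\Q_x(s)^{-1}$ on $\F_0\setminus[s]$, which is equivalent to the identity $(x-\overline s)\Q_x(s)=-\Q_s(x)(s-\overline x)$. I expect this non-commutative polynomial identity to be the main (though routine) obstacle; it can be checked by a direct expansion in which one substitutes $\overline a=2\Re(a)-a$, uses $a\overline a=\overline a a=|a|^2$, and exploits that $\Re(a)$ and $|a|^2$ are central, whereupon all terms cancel in pairs. Granting it, $s\mapsto s-\overline x=1\cdot s+(-\overline x)$ is right slice hyperholomorphic and $s\mapsto\Q_x(s)^{-1}$ is intrinsic, so $S_L^{-1}(\cdot,x)=(\cdot-\overline x)\,\Q_x(\cdot)^{-1}$ is the product of a right slice hyperholomorphic function with an intrinsic one, hence lies in $\rhol(\F_0\setminus[x])$. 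Finally, interchanging the names of the two variables in the alternative representation gives $S_L^{-1}(x,s)=(x-\overline s)\Q_s(x)^{-1}=(x-\overline s)(x^2-2\Re(s)x+|s|^2)^{-1}$, and therefore $-S_L^{-1}(x,s)=-(x-\overline s)(x^2-2\Re(s)x+|s|^2)^{-1}=S_R^{-1}(s,x)$, which is \eqref{SID}.

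Apart from the polynomial identity above, the remaining ingredients are mere bookkeeping with the already-stated product rules for intrinsic functions, together with the elementary observation that $\F_0\setminus[s]$ and $\F_0\setminus[x]$ are axially symmetric slice domains --- they are open, axially symmetric, meet $\R$, and each slice $(\F_0\setminus[s])\cap\C_I$ is a plane with at most two points deleted, hence connected --- which is what makes \Cref{IDeriv}, \Cref{IntChar} and the product rule available on those sets.
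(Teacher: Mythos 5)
Your argument is correct. Note that the paper itself gives no proof of \Cref{SProps}: it is listed among the recalled preliminaries whose proofs are deferred to the literature, and your route --- establishing the second form $S_L^{-1}(s,x)=(s-\overline{x})(s^2-2\Re(x)s+|x|^2)^{-1}$ of the kernel and then reading off regularity in each variable from the product rules for intrinsic functions, with \eqref{SID} obtained by swapping the names of the variables --- is precisely the standard one in the cited references. The one step you flag as the potential obstacle, the identity $(x-\overline{s})\Q_x(s)=-\Q_s(x)(s-\overline{x})$, does close as you predict: after substituting $\overline{s}=2\Re(s)-s$ and $\overline{x}=2\Re(x)-x$ and cancelling the central terms, the difference of the two sides collapses to $x\,\Q_s(s)+\Q_x(x)\,s$, which vanishes because every element is a root of its own characteristic polynomial, $\Q_a(a)=a^2-a(a+\overline{a})+a\overline{a}=0$. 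Your side remarks are also in order: $\F_0\setminus[s]$ and $\F_0\setminus[x]$ are axially symmetric slice domains (each slice is a plane with at most two points removed), so \Cref{IDeriv}, \Cref{IntChar} and the product corollaries apply there, and $\Q_s(x)$, computed in $\C_{I_x}$, vanishes exactly on $[s]$, which justifies the stated domain of definition.
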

\begin{remark}
If $x$ and $s$ belong to the same complex plane, they commute and the slice hyperholomorphic Cauchy-kernels reduce to the classical one:
\[ \frac{1}{s-x} = S_L^{-1}(s,x) = S_R^{-1}(s,x).\]
\end{remark}

\begin{definition}
An axially symmetric set $U\subset\F_0$ is called slice Cauchy domain if $U\cap\C_I$ is a Cauchy domain for any $I\in\ss$. More precisely, for any $I\in\ss$, the following conditions must hold:
\begin{enumerate}[(i)]
\item $U\cap\C_I$ is open
\item $U\cap\C_I$ has a finite number of components (i.e. maximal connected subsets), the closures of any two of which are disjoint
\item the boundary of $U\cap\C_I$ consists of a finite positive number of closed piecewise continuously differentiable Jordan curves, no two of which intersect.
\end{enumerate}
\end{definition}
\begin{remark}
Observe that any Cauchy domain has at most one unbounded component. This component must then contain a neighborhood of infinity.
\end{remark}

\begin{definition}
Let $I\in\ss$ and $\gamma:[0,1]\to \C_I$ be a path. We set $ds_I := (-I)ds$ such that $\int_{\gamma}f(s)\,ds_I\,g(s) := \int_{\gamma}f(s)(-I )\,ds\, g(s)$.
\end{definition}

\begin{theorem}[Cauchy's integral theorem]\label{CauchyThm}
Let $U\subset\F_0$ be a slice Cauchy domain, let $f\in\rhol(\overline{U})$ and $g\in\lhol(\overline{U})$. For any $I\in\ss$ holds
\[\int_{\partial (U\cap\C_I)}f(s)\,ds_I\,g(s) = 0.\]
\end{theorem}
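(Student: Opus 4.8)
The plan is to fix the imaginary unit $I\in\ss$ and reduce the noncommutative integral to a finite sum of classical complex contour integrals over the planar Cauchy domain $U\cap\C_I$. By \Cref{SHol} there is an open axially symmetric set $U'$ with $\overline U\subset U'$ such that $f\in\rhol(U')$ and $g\in\lhol(U')$. Choose, via \Cref{CliffSplit}, a generating basis $I_1=I,I_2,\dots,I_n$ of $\F$ (in the quaternionic case $n=2$) and apply the Splitting Lemma \Cref{SplitLem}: on $U'\cap\C_I$ one can write $g_I=\sum_{A\subset\{2,\dots,n\}}g_AI_A$ and $f_I=\sum_{B\subset\{2,\dots,n\}}I_Bf_B$ with $\C_I$-valued holomorphic functions $g_A,f_B$.

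Next I would exploit that along the contour $\partial(U\cap\C_I)\subset\C_I$ the path element $ds$, the factor $-I$, and the values $f_B(s),g_A(s)$ all lie in the commutative field $\C_I$ and therefore commute with one another; only the constant basis vectors $I_A,I_B$ fail to commute, and these can be pulled out of the integral. Hence
\[
\int_{\partial(U\cap\C_I)}f(s)\,ds_I\,g(s)=\sum_{A,B}I_B\left(\int_{\partial(U\cap\C_I)}f_B(s)\,g_A(s)\,(-I)\,ds\right)I_A .
\]
Each product $f_B g_A$ is holomorphic on $U'\cap\C_I$, and by axial symmetry of $U$ one has $\overline{U\cap\C_I}=\overline U\cap\C_I\subset U'\cap\C_I$, so $f_Bg_A$ is holomorphic on an open neighbourhood (in $\C_I$) of the closure of the Cauchy domain $U\cap\C_I$. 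The classical Cauchy integral theorem for Cauchy domains (applied component-wise after identifying $\C_I\cong\C$) then gives $\int_{\partial(U\cap\C_I)}f_B(s)g_A(s)\,ds=0$; pulling the constant $-I$ out of each integral, every summand vanishes and the whole expression is $0$, which proves the claim for the given (arbitrary) $I$. An alternative route avoiding the Splitting Lemma is to regard $\omega:=f\,ds_I\,g$ as an $\F$-valued $1$-form on $U\cap\C_I$; writing $ds_I=dx_1-I\,dx_0$ one computes $d\omega=\big((f\rbpartial{I})g+f(\bpartial_I g)\big)\,dx_0\wedge dx_1$, which vanishes by \Cref{IDeriv}, and then Stokes' theorem on the Cauchy domain finishes the proof.

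I do not expect a serious obstacle: the real content is the reduction to a single commutative plane. The points that need genuine care are (i) the bookkeeping that legitimises extracting $I_A$ and $I_B$ from the integral — this works precisely because every analytic factor has been arranged to lie in $\C_I$; (ii) checking that the holomorphic components $f_B g_A$ extend to a neighbourhood of $\overline{U\cap\C_I}$, for which the identity $\overline{U\cap\C_I}=\overline U\cap\C_I$ and the hypothesis $\overline U\subset U'$ are used; and (iii) if $U\cap\C_I$ has an unbounded component, invoking the form of the classical Cauchy integral theorem (resp.\ Stokes' theorem) appropriate to such a piecewise-$C^1$-bounded, possibly unbounded, Cauchy domain.
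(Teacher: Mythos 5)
The paper does not actually prove this theorem --- it is recalled from the cited references without proof --- but your argument is exactly the standard one: reduce to the plane $\C_I$ via the Splitting Lemma and apply the classical Cauchy theorem componentwise (or, equivalently, the Stokes computation $d(f\,ds_I\,g)=\big((f\rbpartial{I})g+f(\bpartial_I g)\big)\,dx_0\wedge dx_1=0$). For a \emph{bounded} slice Cauchy domain your proof is complete and correct: extracting the constants $I_B$ on the left and $I_A$ on the right is legitimate because left and right multiplication by fixed elements of $\F$ are $\R$-linear and continuous, hence commute with the integral, and $\overline{U\cap\C_I}=\overline{U}\cap\C_I\subset U'\cap\C_I$ holds by axial symmetry as you say.

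The one genuine gap is your point (iii). The paper's definition of slice Cauchy domain allows unbounded $U$, and for an unbounded planar Cauchy domain there is no form of the classical Cauchy integral theorem that requires only holomorphy on a neighbourhood of the closure: one also needs decay at infinity, and without it the statement itself is false. Concretely, take $U=\{s\in\F_0:|s|>1\}$, $f\equiv 1$ and $g(s)=s^{-1}$, which is intrinsic on the open axially symmetric set $\F_0\setminus\{0\}\supset\overline{U}$ and hence lies in $\lhol(\overline{U})$; then $\partial(U\cap\C_I)$ is the negatively oriented unit circle and $\int_{\partial(U\cap\C_I)}ds_I\,s^{-1}=-2\pi\neq 0$. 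So the theorem must either be restricted to bounded $U$ (which is all that is needed in, e.g., the proof of \Cref{PreCalc}, where it is applied to $W=U\setminus\overline{U'}$) or supplemented by the hypothesis $|f(s)g(s)|=o(|s|^{-1})$ as $|s|\to\infty$. Under that hypothesis your argument does extend: apply the bounded case to $W_r=U\cap B_r(0)$, whose boundary in $\C_I$ is $\partial(U\cap\C_I)$ together with the circle $|s|=r$, and let $r\to\infty$; the circle contributes $O\big(r\sup_{|s|=r}|f(s)g(s)|\big)\to 0$. This is consistent with how the paper actually uses the theorem on unbounded sets, e.g.\ in \Cref{IDWorks}, where the decay of the $S$-resolvent at infinity is invoked explicitly alongside the Cauchy theorem.
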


\begin{theorem}[Cauchy's integral formula]\label{Cauchy}
Let $U\subset\F_0$ be a slice Cauchy domain and let $I\in\ss$. If $f\in\lhol(\overline{U})$, then
\[f(x) = \frac{1}{2\pi}\int_{\partial(U\cap\C_I)} S_L^{-1}(s,x)\,ds_I\, f(s)\quad\text{for all }x\in U.\]
If $f\in\rhol(\overline{U})$, then
\[f(x) = \frac{1}{2\pi}\int_{\partial(U\cap\C_I)}f(s)\, ds_I\, S_R^{-1}(s,x)\quad\text{for all }x\in U.\]
\end{theorem}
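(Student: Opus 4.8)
The plan is to reduce the statement, by means of the Splitting Lemma and the Representation Formula, to the ordinary Cauchy integral formula for holomorphic functions on a Cauchy domain in $\C_I$. Fix $I\in\ss$ and treat first a point $x\in U\cap\C_I$ lying in the chosen slice. For $s\in\partial(U\cap\C_I)$ the elements $s$ and $x$ both lie in $\C_I$ and hence commute, so by the remark following \Cref{SProps} the Cauchy kernel degenerates to the classical one, $S_L^{-1}(s,x)=(s-x)^{-1}$. Since $f\in\lhol(\overline U)$, there is an axially symmetric open set $U'\supset\overline U$ with $f\in\lhol(U')$, and part (i) of \Cref{SplitLem} provides holomorphic components $f_A\colon U'\cap\C_I\to\C_I$ with $f_I=\sum_{A\subset\{2,\ldots,n\}}f_AI_A$ on a neighbourhood of $\overline{U\cap\C_I}$. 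I would then apply the classical Cauchy formula to each $f_A$ on the Cauchy domain $U\cap\C_I$: pulling the finite sum through the integral, using that $(s-x)^{-1}$, $-I$ and $f_A(s)$ all commute inside $\C_I$, and matching the normalisation $\tfrac{1}{2\pi}\,ds_I=\tfrac{1}{2\pi}(-I)\,ds$ with $\tfrac{1}{2\pi i}\,ds$ under $I\leftrightarrow i$, one obtains $\tfrac{1}{2\pi}\int_{\partial(U\cap\C_I)}(s-x)^{-1}\,ds_I\,f(s)=\sum_A f_A(x)I_A=f_I(x)=f(x)$.

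For a general point $x=x_0+I_xx_1\in U$ I would bring in the Representation Formula. Put $x_I:=x_0+Ix_1$ and $\overline{x_I}=x_0-Ix_1$. Because $U$ is axially symmetric and contains $x$, the whole sphere $[x]$ lies in $U$, so $x_I,\overline{x_I}\in U\cap\C_I$; and because $U$ is open while $[s]\subset\partial U$, the kernel $S_L^{-1}(s,x)$ is defined for every $s\in\partial(U\cap\C_I)$. Applying the in-slice case at $x_I$ and at $\overline{x_I}$ and substituting into
\[f(x)=\tfrac12(1-I_xI)f(x_I)+\tfrac12(1+I_xI)f(\overline{x_I})\]
from \Cref{RepFo}, the assertion reduces to the identity
\[\tfrac12(1-I_xI)\,S_L^{-1}(s,x_I)+\tfrac12(1+I_xI)\,S_L^{-1}(s,\overline{x_I})=S_L^{-1}(s,x).\]
By \Cref{SProps} the map $x\mapsto S_L^{-1}(s,x)$ is itself left slice hyperholomorphic on $\F_0\setminus[s]$, so this identity is nothing but its own Representation Formula in the variable $x$; this completes the argument for the left-hand formula.

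The right slice hyperholomorphic formula I would establish by the mirror argument: part (ii) of \Cref{SplitLem} gives $f_I=\sum_A I_Af_A$, the right-hand Representation Formula replaces the left one, and the fact that $x\mapsto S_R^{-1}(s,x)$ is right slice hyperholomorphic on $\F_0\setminus[s]$ follows from \Cref{SProps} together with the kernel identity $S_R^{-1}(s,x)=-S_L^{-1}(x,s)$. I do not expect a genuine obstacle: the substance is entirely in the reduction to $\C_I$, and what is left is bookkeeping — fixing the positive orientation of $\partial(U\cap\C_I)$, reconciling the normalisations $\tfrac{1}{2\pi}\,ds_I$ and $\tfrac{1}{2\pi i}\,ds$, justifying the (finite) interchange of $\sum_A$ with the integral, and checking that every argument fed to the Cauchy kernel stays off its singular sphere $[s]$. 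If one point deserves to be flagged as delicate, it is applying the Representation Formula in the correct variable in the second step: for $f$ it is used to spread the value over $[x]$, whereas for the kernel it is used in $x$ with $s$ held fixed.
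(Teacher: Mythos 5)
The paper does not prove \Cref{Cauchy} at all: it is recalled from the literature (the preliminaries section defers all proofs to \cite{Colombo:2010c} and \cite{Colombo:2011}), so there is no in-paper argument to compare against. Your proof is the standard one from those sources and it is sound: reduce to the slice $\C_I$, where the kernel degenerates to $(s-x)^{-1}$ and the Splitting Lemma (\Cref{SplitLem}) turns the claim into finitely many instances of the classical Cauchy formula, then propagate to all of $U$ by applying the Representation Formula (\Cref{RepFo}) \emph{in the variable $x$} to the kernel $x\mapsto S_L^{-1}(s,x)$, which is legitimate because by \Cref{SProps} this map is left slice hyperholomorphic on the axially symmetric set $\F_0\setminus[s]$ and $[s]\cap[x]=\emptyset$ for $s\in\partial(U\cap\C_I)$, $x\in U$. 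The constants $\tfrac12(1\mp I_xI)$ correctly pass through the integral from the left, and the orientation/normalisation bookkeeping checks out: $\tfrac{1}{2\pi}(-I)\int(s-x)^{-1}f_A(s)\,ds=f_A(x)$.

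One caveat worth recording: the statement, as transcribed here, admits unbounded slice Cauchy domains, and for those the formula is false as written (take $f\equiv 1$ on the complement of a ball; the integral vanishes). Your reduction tacitly assumes the classical Cauchy formula holds on $U\cap\C_I$, which requires that set to be bounded (or a decay hypothesis at infinity plus a correction term $f(\infty)$, as in \Cref{CSCalcInt}). This imprecision originates in the recalled statement rather than in your argument, but you should state explicitly that the slice-wise step is being performed on a bounded Cauchy domain.
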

\begin{remark}\label{OPVal}
Although the presented results were stated for scalar valued functions, they also hold true for functions with values in a two-sided quaternionic Banach space resp. in a two-sided Banach module over $\cliff{n}$. Indeed, one can proof them in these settings with the usual technique of reducing the vector-valued to the scalar-valued case by applying elements of the dual space. For details see \cite{Alpay:2015a}.
\end{remark}

\subsection{The $S$-resolvent operator and the $S$-functional calculus}
In this paper, $V$ denotes either a two-sided quaternionic Banach space  or a two-sided Banach module over~$\cliff{n}$:
\begin{enumerate}[(i)]
\item A two-sided quaternionic Banach space is a two-sided quaternionic vector space\footnote{We shall occasionally refer to a quaternionic vector space  as a module over $\H$ in order combine the discussions of the quaternionic and the Clifford setting.} endowed with a norm $\|\cdot\|_V$ such that it is a Banach space over $\R$ and such that $|a|\|v\|_V = \|av\|_V$ and $\|va\|_V = \|v \|_V|a|$ for all $v\in V$ and all $a\in\H$. A mapping $T:\dom(T)\to V$ defined on a right subspace $\dom(T)$ of $V$ is said to be right linear if $T(ua + v) = T(u)a+ T(v)$ for all $u,v\in \dom(T)$ and all $a\in\H$. It is said to be bounded if $\|T\|_{\boundOP(V)} := \sup_{\|v\|_V = 1} \|T(v)\|_V<+\infty$. The set of all bounded right linear operators, which we denote by $\boundOP(V)$, is a two-sided quaternionic Banach space,  when it is endowed with the scalar multiplications $(aT)(v) := a(T(v))$ and $(Ta)(v) = T(av)$ and with the operator norm.

A right linear operator $T:\dom(T)\subset V\to V$ is called closed if its graph is closed and we denote the set of all such operators by $\closOP(V)$.

\item Let $V_{\R}$ be a real Banach space. Then $V = V_{\R}\otimes\cliff{n}$ is a two-sided Clifford module. If $(e_A)_{A\subset \{1,\ldots,n\}}$ is a basis of $\cliff{n}$, then any element of $V$ is of the form $v = \sum_{A\subset \{1,\dots,n\}} v_A\otimes e_A$ with $v_A\in V_{\R}$. The multiplications with scalars on the left and on the right are defined as 
\[av := \sum_{A,B\subset\{1,\ldots,n\}} (a_Bv_A)\otimes(e_Be_A)\qquad \text{and}\qquad va := \sum_{A,B\subset\{1,\ldots,n\}} (a_Bv_A)\otimes(e_Ae_B)\]
for $v\in V$ and $a = \sum_{B\subset\{1,\ldots,n\}}a_Be_B$. In the following, we shall omit the symbol $\otimes$. 

The Clifford module $V$ turns into a Banach module over $\cliff{n}$ when it is endowed with the norm $\|v\|_V := \sum_{A\subset\{1,\ldots,n\}}\|v_A\|_{V_{\R}}$, i.e. it is a real Banach space and there exists a constant $C>0$ such that $\|v a\|_V \leq C\|v\|_V|a|$ and $\|av\|_V \leq C|a|\|v\|_V$ for all $v\in V$ and all $a\in \cliff{n}$.

If $T_A, A\subset\{1,\ldots n\}$, are bounded operators on $V_{\R}$, then $T = \sum_{A\subset\{1,\ldots,n\}}T_A e_A$ is the right $\cliff{n}$-linear operator that acts as
\[T(v) = \sum_{A\subset\{1,\ldots,n\}} T_A(v_B)e_Ae_B\]
for $v = \sum_{B\subset \{1,\ldots,n\}}v_Be_B\in V$. The set of all such operators corresponds to $\boundOP(V_{\R})\otimes\cliff{n}$, which is a two-sided Banach module over $\cliff{n}$ with the norm\footnote{In the following, we omit all subscripts of norms unless it is unclear which norm we are referring to.} $\|T\|_{\boundOP(V_{\R})\otimes\cliff{n}} := \sum_{A\subset \{1,\ldots,n\}} \|T_A\|_{\boundOP(V_{\R})}$. However, in the Clifford-setting we shall only be interested in operators of paravector type, that is operators of the form
\begin{equation}\label{PVOP}T = T_0 +\sum_{i=0}^n T_ie_i\quad\text{with } T_i\in\boundOP(V_{\R})\end{equation}
and $\boundOP(V)$ shall thus denote the set of all such operators on $V$.

Similarly, we denote by $\closOP(V)$ the set of all closed operators of paravector type, i.e. operators of the form \eqref{PVOP} such that the components $T_i, i=0,\ldots,n$ are closed operators on $V_{\R}$. The operator $T$ is then defined on the common domain $\dom(T) = \bigcap_{i=0}^n \dom(T_i)$. 

\end{enumerate}
Based on the theory of slice hyperholomorphic functions, it is possible to define a functional calculus for operators as they were introduced above. It is the natural generalization of the Riesz-Dunford-functional calculus for complex linear operators to the quaternionic or Clifford-setting; for details see again \cite{Colombo:2011}.

As usual, we define powers of an operator $T$ inductively by $T^0 = \id$ with domain $\dom(T^0) = V$ and $T^{n+1}(v) = T (T^nv)$ with domain $\dom(T^{n+1}) = \{ v \in \dom(T^n): T(v)\in \dom(T)\}$. Moreover, we set $\dom(T^{\infty}) :=  \bigcap_{n\in\N_0}\dom(T^n)$.

\begin{definition}\label{ResSpec}
Let $T\in\closOP(V)$. For $s\in\F_0$ we define the operator
$$
\Q_s(T)x : = (T^2 - 2 {\rm Re}(s) T + |s|^2 \mathcal{I} )x, \qquad x \in \dom (T^2).
$$
The $S$-resolvent set of $T$ is the set
$$
\rho_S(T):=  \{  s\in\F_0: \Q_s(T)^{-1} \in\boundOP(V)\},
$$
and for $s\in\rho_S(T)$ the operator  $\Q_s(T)^{-1} : V\to \dom (T^2)$ is called the pseudo-resolvent of $T$ at $s$.
The $S$-spectrum of $T$ is defined as
$$
\sigma_S(T):=\F_0\setminus \rho_S(T).
$$
\end{definition}

\begin{lemma}
The $S$-spectrum of an operator $T\in\closOP(V)$ is axially symmetric and closed. If $T$ is bounded then $\sigma_S(T)$ is a nonempty, compact set contained in the closed ball $\overline{B_{\|T\|}(0)}$.
\end{lemma}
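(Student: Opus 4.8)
The statement has four parts, and the plan is to establish them in the following order: axial symmetry of $\sigma_S(T)$, closedness of $\sigma_S(T)$, and — under the boundedness hypothesis — the inclusion $\sigma_S(T)\subset\overline{B_{\|T\|}(0)}$ together with the non-emptiness of $\sigma_S(T)$. Axial symmetry is immediate from \Cref{ResSpec}: the operator $\Q_s(T)=T^2-2\Re(s)T+|s|^2\id$ depends on $s$ only through $\Re(s)$ and $|s|$, both constant on the sphere $[s]$, so $\Q_{s'}(T)=\Q_s(T)$ for every $s'\in[s]$; hence membership in $\rho_S(T)$, and therefore in $\sigma_S(T)$, is a property of the whole class $[s]$. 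To prove closedness I would show that $\rho_S(T)$ is open by a Neumann series argument: given $s_0\in\rho_S(T)$, one has on $\dom(T^2)$ the identity $\Q_s(T)=(\id+E_s)\Q_{s_0}(T)$ with
\[
E_s=\bigl(\Q_s(T)-\Q_{s_0}(T)\bigr)\Q_{s_0}(T)^{-1}=-2(\Re s-\Re s_0)\,T\Q_{s_0}(T)^{-1}+\bigl(|s|^2-|s_0|^2\bigr)\Q_{s_0}(T)^{-1}.
\]
The operator $T\Q_{s_0}(T)^{-1}$ is everywhere defined, because $\Q_{s_0}(T)^{-1}$ has range in $\dom(T^2)\subset\dom(T)$, and it is closed as a composition of the closed operator $T$ with a bounded one, hence bounded by the closed graph theorem. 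Thus $\|E_s\|<1$ for $s$ sufficiently close to $s_0$, so $\id+E_s$ is boundedly invertible, $\Q_s(T)\colon\dom(T^2)\to V$ is bijective, and $\Q_s(T)^{-1}=\Q_{s_0}(T)^{-1}(\id+E_s)^{-1}\in\boundOP(V)$; hence $s\in\rho_S(T)$.

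Assume now $T\in\boundOP(V)$. For $|s|>\|T\|$ I would write down $\Q_s(T)^{-1}$ explicitly: define real scalars $c_m=c_m(s)$ by $c_{-1}=0$, $c_0=|s|^{-2}$ and $|s|^2c_m=2\Re(s)c_{m-1}-c_{m-2}$ for $m\geq 1$. The characteristic roots $\bigl(\Re(s)\pm i|\underline s|\bigr)/|s|^2$ of this recursion both have modulus $|s|^{-1}$, so $\limsup_m|c_m|^{1/m}=|s|^{-1}$ and the series $\sum_{m\geq 0}T^mc_m(s)$ converges absolutely in $\boundOP(V)$ whenever $\|T\|<|s|$; a term-by-term computation using the recursion shows that this series is a two-sided inverse of $\Q_s(T)=|s|^2\id-2\Re(s)T+T^2$. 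Hence $\{s:|s|>\|T\|\}\subset\rho_S(T)$, i.e. $\sigma_S(T)\subset\overline{B_{\|T\|}(0)}$, and combined with closedness this makes $\sigma_S(T)$ compact.

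For non-emptiness I would mimic the classical Liouville argument. If $\rho_S(T)$ were all of $\F_0$, then $s\mapsto S_L^{-1}(s,T)$ would be an entire, i.e. everywhere right slice hyperholomorphic, $\boundOP(V)$-valued function, bounded on the compact ball $\overline{B_{\|T\|}(0)}$ by continuity and, by its series expansion $S_L^{-1}(s,T)=\sum_{n\geq 0}T^ns^{-n-1}$ for large $|s|$, tending to $0$ as $|s|\to\infty$. A Liouville theorem for slice hyperholomorphic functions would then force $S_L^{-1}(\cdot,T)\equiv 0$, which contradicts $S_L^{-1}(0,T)=-T^{-1}\neq 0$ (note that $0\in\rho_S(T)$ makes $T^2$, hence $T$, boundedly invertible, and $V\neq\{0\}$).

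The algebraic parts — the identity $\Q_s(T)=(\id+E_s)\Q_{s_0}(T)$, the verification that $\sum_mT^mc_m$ inverts $\Q_s(T)$, and the closed-graph step — are routine. The step I expect to require the most care is the non-emptiness, because it rests on a Liouville-type theorem for entire $\boundOP(V)$-valued slice hyperholomorphic functions (which must be deduced from \Cref{Cauchy} via Cauchy-type estimates, or from the Splitting Lemma \Cref{SplitLem} together with the classical Liouville theorem) and on the slice hyperholomorphicity of the $S$-resolvent in the variable $s$. An alternative that avoids Liouville is to fix $I\in\ss$, regard $V$ as a complex Banach space under right multiplication by $\C_I$ and $T$ as a bounded $\C_I$-linear operator on it, and use the factorization $\Q_\lambda(T)=(T-\overline\lambda\id)(T-\lambda\id)$ into commuting operators (with $(\lambda\id)v:=v\lambda$) to identify $\sigma_S(T)\cap\C_I$ with $\sigma_{\C_I}(T)\cup\overline{\sigma_{\C_I}(T)}$, which is non-empty and contained in $\overline{B_{\|T\|}(0)}$ by classical spectral theory; the care then goes into checking that a bounded $\C_I$-linear inverse of $\Q_\lambda(T)$ belongs to $\boundOP(V)$, so that this spectral condition genuinely matches the one in \Cref{ResSpec}.
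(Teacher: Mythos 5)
The paper states this lemma without proof---it is recalled as a known preliminary (from the literature on the $S$-functional calculus)---so there is no in-paper argument to compare against. Your proposal is correct and follows the standard route: axial symmetry because $\Q_s(T)$ depends on $s$ only through $\Re(s)$ and $|s|^2$, which are constant on $[s]$; openness of $\rho_S(T)$ by a Neumann-series perturbation of $\Q_{s_0}(T)$, with the closed graph theorem supplying boundedness of $T\Q_{s_0}(T)^{-1}$; the inclusion $\sigma_S(T)\subset\overline{B_{\|T\|}(0)}$ from the convergent series $\sum_m T^m c_m(s)$, whose coefficients your recursion correctly identifies as having growth rate $|s|^{-1}$ so that it inverts $\Q_s(T)$ for $|s|>\|T\|$; and non-emptiness via Liouville applied to $s\mapsto S_L^{-1}(s,T)$. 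The only ingredient not fully carried out is the Liouville theorem for $\boundOP(V)$-valued slice hyperholomorphic functions, which you correctly flag and which does follow from the Splitting Lemma \Cref{SplitLem} together with the classical Liouville theorem applied slicewise; your alternative for non-emptiness via restriction to a plane $\C_I$ and the factorization $\Q_\lambda(T)=(T-\overline{\lambda}\id)(T-\lambda\id)$ into commuting $\C_I$-linear operators is also sound, the point you note about the inverse being automatically right linear and bounded being the only thing to check.
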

\begin{remark}
The following decomposition, analogue to the splitting in the classical theory, was introduced in \cite{Ghiloni:2013}:
\begin{enumerate}[(i)]
\item The {\em point $S$-spectrum} $\sigma_{Sp}(T)$ consists of all $s\in\F_0$ such that $\ker\Q_{s}(T)\neq \{0\}$.
\item The {\em continuous $S$-spectrum} $\sigma_{Sc}(T)$ consists of all $s\in\F_0$ such that $\ker\Q_{s}(T) = \{0\}$ and $\ran\Q_s(T)$ is dense in, but a proper subset of $V$.
\item The {\em residual $S$-spectrum} consists of all other points of the spectrum, i.e. of all $s\in\F_0$ such that $\ker\Q_s(T) = \{0\}$ but $\ran\Q_s(T)$ is not dense in $V$. 
\end{enumerate}
In the quaternionic setting, the point $S$-spectrum of $T$ equals the set of all right eigenvalues of~$T$ as it was shown in \cite{Colombo:2012}. The notion of eigenvalues must however be replaced by the notion of {\em eigenspheres} due to the axial symmetry of the $S$-spectrum. Indeed, if $v$ is an eigenvector of $T$ associated to the eigenvalue $s$, then $T(va) = T(v)a = vsa = (va)(a^{-1}sa)$ for $a\in\H$ and hence $va$ is an eigenvector associated to $a^{-1}sa$. Quaternions of the form $a^{-1}sa$ are exactly the elements of the sphere $[s]$ that is associated to $s$.
\end{remark}

\begin{definition}
Let $T\in\closOP(V)$. The left $S$-resolvent operator is defined as
\begin{equation}\label{SresolvoperatorL}
S_L^{-1}(s,T):= \Q_s(T)^{-1}\overline{s} -T\Q_s(T)^{-1}
\end{equation}
and the right $S$-resolvent operator is defined as
\begin{equation}\label{SresolvoperatorR}
S_R^{-1}(s,T):=-(T-\id \overline{s})\Q_s(T)^{-1}.
\end{equation}
\end{definition}
\begin{remark}\label{RkResExtension} Observe that  one obtains the right $S$-resolvent operator by formally replacing the variable $x$ in the right slice hyperholomorphic Cauchy kernel by the operator $T$. The same procedure yields
\begin{equation}\label{LResShort}
S_L^{-1}(s,T)v = -\Q_s(T)^{-1}(T-\overline{s}\id)v,\quad\text{for }v\in\dom(T)
\end{equation}
for the left $S$-resolvent operator. This operator is not defined on the entire space $V$, but only on  the domain $\dom(T)$ of $T$. Exploiting the fact that $ \Q_s(T)^{-1}$ and $T$ commute on $\dom(T)$, one can overcome this problem: commuting $T$ and $\Q_s(T)^{-1}$ in \eqref{LResShort} yields \eqref{SresolvoperatorL}.  The operator $\Q_s(T) = T^2 - 2\Re(s)T + |s|^2\id$ maps $\dom(T^2)$ to $V$. Hence, the pseudo-resolvent $\Q_s(T)^{-1}$ maps $V$ to $\dom(T^2)\subset \dom(T)$ if $s\in\rho_S(T)$. Since $T$ is closed and $\Q_s(T)^{-1}$ is bounded, equation \eqref{SresolvoperatorL} defines a continuous and therefore bounded right linear operator on the entire space $V$. Hence, the left resolvent $S_L^{-1}(s,T)$ is the natural extension of the operator \eqref{LResShort} to all of~$V$. In particular, if $T$ is bounded, then $S_L^{-1}(s,T)$ can directly be defined by~\eqref{LResShort}.

When considering left linear operators, one must obviously modify the definition
of the right $S$-resolvent operator for the same reasons.
\end{remark}

\begin{theorem}\label{ResProp}Let $T\in\closOP(V)$ and let $s\in\rho_S(T)$.
\begin{enumerate}[(i)]
\item  If $T$ and $s$ commute, then
\[\Q_s(T)^{-1} = (T-\overline{s}\id)^{-1}(T-s\id)^{-1}\]
 and the $S$-resolvent operators reduce to the classical resolvent, that is
\[S_L^{-1}(s,T) = S_R^{-1}(s,T) = (s\id - T)^{-1}.\]
\item The function $s\mapsto S_R^{-1}(s,T)$ is left slice hyperholomorphic and the function $s\mapsto S_L^{-1}(s,T)$ is right slice hyperholomorphic on $\rho_S(T)$.
\item The right $S$-resolvent operator satisfies the right $S$-resolvent equation
\begin{equation}\label{reseqR}
sS_R^{-1}(s,T)v-S_R^{-1}(s,T)Tv=\id v, \quad v\in \dom (T).
\end{equation}
The left $S$-resolvent operator satisfies the left $S$-resolvent equation
\begin{equation}\label{reseqL}
S_L^{-1}(s,T)s v - TS_L^{-1}(s,T)v = \id v\qquad v\in V.
\end{equation}
\end{enumerate}
\end{theorem}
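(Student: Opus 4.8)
The plan is to treat the three assertions separately; each reduces to an algebraic identity for $\Q_s(T)$ together with the boundedness of the pseudo-resolvent and the fact, noted in \Cref{RkResExtension}, that $T$ and $\Q_s(T)^{-1}$ commute on $\dom(T)$.

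For (i), since $s$ — and hence also $\overline s = 2\Re(s) - s$ — commutes with $T$, a direct computation on $\dom(T^2)$ using $s\overline s = |s|^2$ gives the factorisations $\Q_s(T) = (s\id - T)(\overline s\id - T) = (\overline s\id - T)(s\id - T)$. Because $s\in\rho_S(T)$, the map $\Q_s(T)\colon\dom(T^2)\to V$ is bijective with bounded inverse; from the factorisations, any kernel element of $\overline s\id - T$ or of $s\id - T$ already lies in $\dom(T^2)$ and is annihilated by $\Q_s(T)$, hence vanishes, while surjectivity of both factors follows from $\ran\Q_s(T) = V$. As $T$ is closed, the closed graph theorem yields $(s\id - T)^{-1},(\overline s\id - T)^{-1}\in\boundOP(V)$, and inverting the factorisations produces $\Q_s(T)^{-1} = (T-\overline s\id)^{-1}(T-s\id)^{-1}$. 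Inserting this into \eqref{SresolvoperatorL} and \eqref{SresolvoperatorR} and using that $s$, $\overline s$ commute with $(s\id - T)^{-1}$, both expressions collapse to $(s\id - T)^{-1}$.

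For (ii), the key observation is that $\Q_s(T)^{-1}$ depends on $s = s_0 + I_s s_1$ only through $\Re(s) = s_0$ and $|s|^2 = s_0^2 + s_1^2$; writing $R := \Q_s(T)^{-1}$, it is real-analytic in $(s_0,s_1)$ (inversion being analytic on the invertible elements of $\boundOP(V)$) and even in $s_1$. Substituting $\overline s = s_0 - I_s s_1$ into \eqref{SresolvoperatorR} exhibits $S_R^{-1}(s,T)$ as the left slice function with $\alpha = (s_0\id - T)R$ and $\beta = -s_1 R$; the compatibility conditions \eqref{SymCond} hold because $R$ is even in $s_1$, and the Cauchy--Riemann equations \eqref{CR} follow by differentiating through $\partial R = -R\,(\partial\Q_s(T))\,R$ and using the identity $(T - s_0\id)^2 = \Q_s(T) - s_1^2\id$, which reduces $R(T-s_0\id)^2 R$ to $R - s_1^2 R^2$, together with the commutation of $R$ and $T$. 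Hence $s\mapsto S_R^{-1}(s,T)\in\lhol(\rho_S(T))$, and treating \eqref{SresolvoperatorL} in the same way (so that, in $\boundOP(V)$, $S_L^{-1}(s,T)$ is the right slice function built from the very same $\alpha$ and $\beta$) shows $s\mapsto S_L^{-1}(s,T)\in\rhol(\rho_S(T))$. Alternatively, one pairs with bounded functionals and invokes \Cref{OPVal} to reduce to the scalar statement contained in \Cref{SProps} (combined with \eqref{SID}).

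For (iii), both identities are obtained by expanding the definitions. Formula \eqref{SresolvoperatorL} gives, for $v\in V$ and using $\Q_s(T)^{-1}v\in\dom(T^2)$, $s\overline s = |s|^2$ and $s+\overline s = 2\Re(s)$,
\begin{equation*}
S_L^{-1}(s,T)sv - TS_L^{-1}(s,T)v = \bigl(T^2 - 2\Re(s)T + |s|^2\id\bigr)\Q_s(T)^{-1}v = v,
\end{equation*}
which is \eqref{reseqL}. For \eqref{reseqR} one expands \eqref{SresolvoperatorR} for $v\in\dom(T)$ in exactly the same manner, the only extra ingredient being the commutation $\Q_s(T)^{-1}Tv = T\Q_s(T)^{-1}v$ from \Cref{RkResExtension}, used to rewrite $S_R^{-1}(s,T)Tv$ in terms of $T^k\Q_s(T)^{-1}v$; the surviving terms then reassemble as $\Q_s(T)\Q_s(T)^{-1}v = v$. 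I expect the only real difficulties to be bookkeeping: in (i), remembering that the factorisation of $\Q_s(T)$ holds only on $\dom(T^2)$ and tracking domains when passing to inverses; in (ii), carrying out the Cauchy--Riemann verification at the operator level while keeping the left- and right-scalar multiplications straight; and in (iii), the commutation lemma behind the right $S$-resolvent equation, which is also where the restriction $v\in\dom(T)$ is genuinely needed.
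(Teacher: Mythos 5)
Your proposal is correct. The paper itself states \Cref{ResProp} as a preliminary recalled from the literature and gives no proof, so there is nothing internal to compare against; your argument is the standard direct verification (factorisation of $\Q_s(T)$ on $\dom(T^2)$ plus the closed graph theorem for (i), exhibiting $\alpha=(s_0\id-T)\Q_s(T)^{-1}$, $\beta=-s_1\Q_s(T)^{-1}$ and checking \eqref{SymCond} and \eqref{CR} for (ii), and expansion of the definitions using $s+\overline{s}=2\Re(s)$, $s\overline{s}=|s|^2$ and the commutation of $T$ with $\Q_s(T)^{-1}$ on $\dom(T)$ for (iii)). The only weak spot is the parenthetical alternative in (ii) -- pairing with functionals does not directly reduce to \Cref{SProps}, since $S_R^{-1}(s,T)$ is not the image of the scalar kernel under a homomorphism -- but that remark is not needed, as your primary computation is complete.
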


Finally, the $S$-resolvent equation is the analogue of the classical resolvent equation. Note that it involves both $S$-resolvent operators and cannot be stated just for one of them. The $S$-resolvent equation has been proved in \cite{Alpay:2015} assuming that $T$ is bounded. In \cite{fracpow} it was generalized to the case of unbounded operators, in which one has to take possible problems  into account that concern the domains of definition of the operators.

\begin{theorem}[$S$-resolvent equation]Let $T\in\closOP(V)$. For  $s,p \in  \rho_S(T)$ with $s\notin[p]$, we have
\begin{equation}\label{resEQ}
\begin{split}
S_R^{-1}(s,T)S_L^{-1}(p,T)=&\big[[S_R^{-1}(s,T)-S_L^{-1}(p,T)]p
\\
&
-\overline{s}[S_R^{-1}(s,T)-S_L^{-1}(p,T)]\big](p^2-2s_0p+|s|^2)^{-1}.
\end{split}
\end{equation}
\end{theorem}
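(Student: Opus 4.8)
The strategy is the classical one: combine the left and right $S$-resolvent equations \eqref{reseqL} and \eqref{reseqR}, taking the left equation at the point $p$ and the right equation at the point $s$, exactly as in the bounded case treated in \cite{Alpay:2015} and its unbounded extension in \cite{fracpow}, but keeping careful track of domains. The one genuine difficulty is that $\F_0$ is noncommutative: once we have a first-order relation tying the product $S_R^{-1}(s,T)S_L^{-1}(p,T)$ to $S_R^{-1}(s,T)-S_L^{-1}(p,T)$, we cannot simply ``divide by $p-s$''. The device that circumvents this is the characteristic identity $s^2 = 2s_0s - |s|^2$ (with $s_0=\Re(s)$), and it is precisely what produces the factor $(p^2-2s_0p+|s|^2)^{-1}$.

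Write $B := S_R^{-1}(s,T)S_L^{-1}(p,T)$ and $R := S_R^{-1}(s,T) - S_L^{-1}(p,T)$; both are bounded operators on $V$. First I would record that $S_L^{-1}(p,T)$ maps $V$ into $\dom(T^2)\subset\dom(T)$, which follows from \eqref{SresolvoperatorL} together with the fact, noted in \Cref{RkResExtension}, that $\Q_p(T)^{-1}$ maps $V$ into $\dom(T^2)$. Hence \eqref{reseqL} at $p$ gives, for every $v\in V$, that $TS_L^{-1}(p,T)v = S_L^{-1}(p,T)pv - v$; multiplying on the left by the bounded operator $S_R^{-1}(s,T)$ yields
\[
B(pv) - S_R^{-1}(s,T)TS_L^{-1}(p,T)v = S_R^{-1}(s,T)v .
\]
On the other hand, applying \eqref{reseqR} at $s$ to the vector $S_L^{-1}(p,T)v\in\dom(T)$ gives
\[
s\,B(v) - S_R^{-1}(s,T)TS_L^{-1}(p,T)v = S_L^{-1}(p,T)v .
\]
Subtracting these two identities, the terms $S_R^{-1}(s,T)TS_L^{-1}(p,T)v$ cancel and we obtain the key relation
\[
B(pv) - s\,B(v) = R(v)\qquad\text{for all } v\in V ,
\]
to which I will refer as $(\star)$.

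Next I would iterate $(\star)$. Replacing $v$ by $pv$ in $(\star)$ gives $B(p^2v) - s\,B(pv) = R(pv)$; substituting $B(pv) = s\,B(v)+R(v)$ from $(\star)$ and then using $s^2=2s_0s-|s|^2$ together with $s\,B(v)=B(pv)-R(v)$ once more, everything collapses to
\[
B(p^2v) - 2s_0\,B(pv) + |s|^2\,B(v) = R(pv) + (s-2s_0)R(v) = R(pv) - \overline{s}\,R(v) ,
\]
using $s-2s_0=-\overline{s}$. Because $B$ is additive and $2s_0,|s|^2\in\R$, the left-hand side equals $B\big((p^2-2s_0p+|s|^2)v\big)$. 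Since $s\notin[p]$, the element $\Q_s(p):=p^2-2s_0p+|s|^2$ of $\F_0$ is invertible — this is exactly the hypothesis under which the Cauchy kernel is defined off $[s]$ — so replacing $v$ by $\Q_s(p)^{-1}v$ yields
\[
B(v) = R\big(p\,\Q_s(p)^{-1}v\big) - \overline{s}\,R\big(\Q_s(p)^{-1}v\big) .
\]
Unwinding the conventions $(Ta)(v)=T(av)$ and $(aT)(v)=a(T(v))$, the right-hand side is precisely $\big([R\,p-\overline{s}\,R]\,\Q_s(p)^{-1}\big)(v)$, which is the claimed identity \eqref{resEQ}.

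The main obstacle is exactly the noncommutativity flagged above: $B$ is only right-linear, so $(\star)$ cannot be solved for $B$ by pulling the left factor $p$ out of $B(pv)$; squaring $(\star)$ up to the quadratic $p^2-2s_0p+|s|^2$ via the characteristic identity for $s$ is what makes the argument work, and it simultaneously accounts for the precise shape of the right-hand side of \eqref{resEQ}. Beyond that, the proof is a matter of careful domain bookkeeping — harmless here since $S_L^{-1}(p,T)$ already lands in $\dom(T^2)$ and both $S$-resolvent operators are bounded — and of checking that the final rearrangement is consistent with the operator-scalar multiplication conventions fixed earlier.
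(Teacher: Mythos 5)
Your proof is correct, and since the paper does not prove this theorem itself but imports it from \cite{Alpay:2015} and \cite{fracpow}, there is nothing internal to compare against; your derivation --- subtracting the left equation \eqref{reseqL} at $p$ (left-multiplied by $S_R^{-1}(s,T)$) from the right equation \eqref{reseqR} at $s$ (applied to $S_L^{-1}(p,T)v$) to get the first-order relation, then closing it up to the quadratic $p^2-2s_0p+|s|^2$ via $s^2=2s_0s-|s|^2$ --- is exactly the standard argument of those references, with the domain bookkeeping handled properly. One small inaccuracy: by \eqref{SresolvoperatorL}, $S_L^{-1}(p,T)=\Q_p(T)^{-1}\overline{p}-T\Q_p(T)^{-1}$ maps $V$ into $\dom(T)$ rather than into $\dom(T^2)$ (the second summand only lands in $\dom(T)$), but $\dom(T)$ is all your argument actually uses.
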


The definition of the $S$-resolvent operator allows us to define the $S$-functional calculus, which generalizes the Riesz-Dunford-functional calculus for holomorphic functions to our settings. In the following, we denote the domain of a function $f$ by $\fdom(f)$.
\begin{definition}\label{BSCalc}[$S$-functional calculus for bounded operators]
Let $T\in\boundOP(V)$. We define for any $f\in\rhol(\sigma_S(T))$
\begin{equation}\label{SCalcIntB}
f(T) := \frac{1}{2\pi} \int_{\partial(U\cap\C_I) }f(s)\,ds_I\, S_R^{-1}(s,T)
\end{equation}
and for $f\in\lhol(\sigma_S(T))$ 
\begin{equation*}
f(T) := \frac{1}{2\pi}\int_{\partial(U\cap\C_I)}S_L^{-1}(s,T)\,ds_I\,f(s),
\end{equation*}
where $I$ is an arbitrary imaginary unit and $U$ is any bounded slice Cauchy domain with $\sigma_S(T)\subset U$ and $\overline{U}\subset\fdom(f)$ that is also a slice domain.  These integrals are independent of the choice of the imaginary unit $I \in\ss$ and of the slice Cauchy domain $U$.
\end{definition}

 We say that a function $f$ is left slice hyperholomorphic at infinity if it is left slice hyperholomorphic on the set ${\{s\in\F_0: r<|s|\}}$ for some $r>0$ and the limit $\lim_{s\to\infty} f(s)$ exists. In this case we define
\[f(\infty) := \lim_{s\to\infty}f(s).\]
Similarly, we define right slice hyperholomorphicity at infinity.
\begin{definition}
Let $T\in\closOP(T)$. We denote the set of all functions $f\in\lhol(\sigma_S(T))$  that are left slice hyperholomorphic at infinity by $f\in\lhol(\sigma_S(T)\cup\{\infty\})$ and the set of all functions $f\in\rhol(\sigma_S(T))$  that are right slice hyperholomorphic at infinity by $f\in\rhol(\sigma_S(T)\cup\{\infty\})$.
\end{definition}

As in the complex case the $S$-functional calculus for unbounded operators is defined using a transformation of the unbounded operator into a bounded one.
For $\alpha\in\R$ we consider the function $\Phi_\alpha:\F_0\cup\{\infty\}\to\F_0\cup\{\infty\}$ defined by $\Phi_\alpha(s) = (s-\alpha)^{-1}$ for $s\in\F_0\setminus\{\alpha\}$, $\Phi_\alpha(\alpha) = \infty$ and $\Phi_\alpha(\infty) = 0$.
\begin{definition}{\rm
Let $T\in\closOP(V)$ be such that $\rho_S(T)\cap\R\neq\emptyset$, let $\alpha\in \rho_S(T)\cap\R$ and set $A = {(T-\alpha\id)^{-1} }= - S_L^{-1}(\alpha,T)$. The map $f  \mapsto f\circ\Phi_\alpha^{-1}$ defines a bijective relation between $\lhol(\sigma_S(T)\cup\{\infty\})$ and $\lhol(\sigma_S(A))$ resp. between $\rhol(\sigma_S(T)\cup\{\infty\})$ and $\rhol(\sigma_S(A))$. For any $f\in \lhol(\sigma_S(T)\cup\{\infty\})$ and any $f\in \rhol(\sigma_S(T)\cup\{\infty\})$ we define
\[ f(T) := (f\circ\Phi_{\alpha}^{-1})(A).\]}
\end{definition}
This definition is independent of the choice of $\alpha\in\rho_S(T)\cap\R$. Moreover, an integral representation corresponding to the one in \eqref{SCalcIntB} holds true as the next theorem shows.
\begin{theorem}\label{CSCalcInt}
Let $ T\in\closOP(V)$ with $\rho_S(T)\cap\R\neq\emptyset$. If $f\in\lhol(\sigma_S(T)\cup\{\infty\})$, then 
\[f(T) = f(\infty)\id + \frac1{2\pi}\int_{\partial(U\cap\C_I)} S_L^{-1}(s,T)\,ds_I\,f(s),\]
and if $f\in\rhol(\sigma_S(T)\cup\{\infty\})$, then
\[f(T) = f(\infty)\id + \frac1{2\pi}\int_{\partial(U\cap\C_I)} f(s)\,ds_I\,S_R^{-1}(s,T),\]
for any unbounded slice Cauchy domain $U$ with $\sigma_S(T)\subset U$ and $\overline{U}\subset \fdom(f)$ that is also a slice domain and any imaginary unit $I\in\ss$.
\end{theorem}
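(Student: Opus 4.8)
The plan is to reduce the claimed integral representation to the integral formula for the bounded $S$-functional calculus in \Cref{BSCalc} by means of the transformation $\Phi_\alpha$ that already underlies the definition of $f(T)$. Fix $\alpha\in\rho_S(T)\cap\R$ and set $A:=(T-\alpha\id)^{-1}=-S_L^{-1}(\alpha,T)\in\boundOP(V)$, so that $f(T)=g(A)$ with $g:=f\circ\Phi_\alpha^{-1}$. Since $\Phi_\alpha$ is intrinsic and maps, on every plane $\C_I$, the set $\C_I\setminus\{\alpha\}$ conformally onto $\C_I$ while carrying $\alpha$ to $\infty$, it induces a bijection between the axially symmetric open neighbourhoods of $\sigma_S(T)\cup\{\infty\}$ and those of $\Phi_\alpha(\sigma_S(T))\cup\{0\}=\sigma_S(A)$, under which $f\in\lhol(\sigma_S(T)\cup\{\infty\})$ corresponds to $g\in\lhol(\sigma_S(A))$. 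Consequently \Cref{BSCalc} gives $g(A)=\frac{1}{2\pi}\int_{\partial(W\cap\C_I)}S_L^{-1}(p,A)\,dp_I\,g(p)$ for every bounded slice Cauchy domain $W$ which is also a slice domain and satisfies $\sigma_S(A)\subset W$ and $\overline{W}\subset\fdom(g)$, the value being independent of $W$ and $I$.

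Next I would carry out the substitution $p=\Phi_\alpha(s)=(s-\alpha)^{-1}$. Choosing $W$ suitably, $U_0:=\Phi_\alpha^{-1}(W)$ is an unbounded slice Cauchy domain, again a slice domain, with $\sigma_S(T)\subset U_0$, $\infty\in U_0$, $\alpha\notin\overline{U_0}$ and $\overline{U_0}\subset\fdom(f)$; this is possible because $\Phi_\alpha^{-1}$ acts on each $\C_I$ as an orientation-preserving M\"obius transformation sending $0$ to $\infty$, so $\partial(W\cap\C_I)$ is carried onto $\partial(U_0\cap\C_I)$ with its natural orientation, while $g(p)=f(s)$. The core of the argument is the transformation identity for the $S$-resolvents
\[ S_L^{-1}(\Phi_\alpha(s),A)\,d\Phi_\alpha(s)_I=\bigl(S_L^{-1}(s,T)-(s-\alpha)^{-1}\id\bigr)\,ds_I,\qquad s\in\rho_S(T)\setminus\{\alpha\}. \]
To establish it, one first writes $\Q_s(T)=(T-\alpha\id)^2-2(\Re(s)-\alpha)(T-\alpha\id)+|s-\alpha|^2\id$, from which a direct computation on $\dom(T^2)$ yields $\Q_{\Phi_\alpha(s)}(A)=|\Phi_\alpha(s)|^2\,A^2\,\Q_s(T)$; substituting this into the explicit formulas for the $S$-resolvents, using that $\Q_s(T)^{-1}$ commutes with $T$, and multiplying by the Jacobian $d\Phi_\alpha(s)_I=-(s-\alpha)^{-2}\,ds_I$, one obtains the identity above --- first in the commutative case, where $S_L^{-1}$ reduces to the ordinary resolvent and the identity is the classical one, and then in general. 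This noncommutative verification is the step requiring the most care, and I expect it to be the main obstacle: since $T$ is unbounded and the multiplication is noncommutative, one must check that all the unbounded pieces that appear (for instance $(T-\alpha\id)^2\Q_s(T)^{-1}$, which is bounded because $T^k\Q_s(T)^{-1}$ is bounded for $k=0,1,2$) extend to honest bounded operators on all of $V$, and keep careful track of the order of the operator and scalar factors.

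Inserting the identity, $g(A)$ becomes $\frac{1}{2\pi}\int_{\partial(U_0\cap\C_I)}S_L^{-1}(s,T)\,ds_I\,f(s)$ minus the remainder $r:=\frac{1}{2\pi}\int_{\partial(U_0\cap\C_I)}(s-\alpha)^{-1}\id\,ds_I\,f(s)$. Because $\alpha\notin\overline{U_0}$, the integrand of $r$ is slice hyperholomorphic on all of $U_0$, so by \Cref{CauchyThm} its contour may be deformed within $U_0$ onto a sphere $\{|s|=R\}$ with $R$ large; evaluating the resulting integral through the power series expansions of $(s-\alpha)^{-1}$ and of $f$ at infinity, together with $f(\infty)=\lim_{s\to\infty}f(s)$, gives $r=-f(\infty)\id$. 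Hence $g(A)=f(\infty)\id+\frac{1}{2\pi}\int_{\partial(U_0\cap\C_I)}S_L^{-1}(s,T)\,ds_I\,f(s)$, which is the assertion for the particular domain $U_0$.

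Finally, for an arbitrary admissible unbounded slice Cauchy domain $U$, the term $f(\infty)\id$ is independent of $U$, and the difference of the Cauchy integrals over $\partial(U\cap\C_I)$ and over $\partial(U_0\cap\C_I)$ equals the integral of $S_L^{-1}(s,T)\,ds_I\,f(s)$ over the boundary of a slice Cauchy domain contained in $\rho_S(T)\cap\fdom(f)$, hence vanishes by \Cref{CauchyThm}, since $s\mapsto S_L^{-1}(s,T)$ is right slice hyperholomorphic on $\rho_S(T)$ and $f$ is left slice hyperholomorphic on $\fdom(f)$; independence of the choice of $\alpha$ is already known. The right slice hyperholomorphic case follows in exactly the same way, using $S_R^{-1}$ in place of $S_L^{-1}$, the corresponding transformation identity, and the scalar factors written on the other side.
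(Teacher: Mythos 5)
Your argument is correct: pulling the bounded-calculus integral for $g(A)=(f\circ\Phi_\alpha^{-1})(A)$ back through the substitution $p=(s-\alpha)^{-1}$ via the transformation identity $S_L^{-1}(\Phi_\alpha(s),A)\,d\Phi_\alpha(s)_I=\bigl(S_L^{-1}(s,T)-(s-\alpha)^{-1}\id\bigr)ds_I$ and evaluating the residual term at infinity is exactly the standard proof of this representation, and the domain/orientation points you flag are handled correctly. Note that the paper itself states \Cref{CSCalcInt} as a recalled preliminary without proof, citing \cite{Colombo:2011}; your proof coincides with the one given there, so there is nothing genuinely different to compare.
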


The functional calculi defined above are consistent with algebraic operations on the underlying function classes such as addition, multiplications with scalars from the left resp. right and multiplication and composition with intrinsic functions.

\section{A direct approach to the $S$-functional calculus for unbounded operators}
The technique of reducing the functional calculus for unbounded operators to the one of bounded operators is very useful in the classical complex setting. In the quaternionic or Clifford setting it has one disadvantage: it only applies to operators whose $S$-resolvent set contains a real point. Otherwise the map $s \mapsto (s-\lambda)^{-1}$ does not correspond to the $S$-resolvent operators at $\lambda$. In fact, it is then not even slice hyperholomorphic. The natural candidates to replace this function, the left and right Cauchy kernels,  are not intrinsic. Since the underlying concepts (spectral mapping, compatibility with composition of functions etc.) only apply to intrinsic functions and since a composed function is in general only slice hyperholomorphic, if the inner function is intrinsic, the left and right Cauchy kernels cannot be used to reduce the problem of defining a functional calculus for unbounded operators to the bounded case either. We therefore choose a direct approach, similar to the one \citeauthor{Taylor:1951} chose in \cite{Taylor:1951} for the complex setting, and define the $S$-functional calculus also for operators in $\closOP(V)$ by a Cauchy-integral.

The results in the following sections will often be stated for left and right slice hyperholomorphic functions. We will only give the proofs for the left slice hyperholomorphic case since the proofs of the other case are similar with obvious modifications.

\subsection{Some remarks on slice Cauchy domains}
The following theorem is well known in the complex case. Implicitly, it has also been assumed to hold true in our settings but, to the best of the author's knowledge, it has never been stated explicitly, which we shall do for the sake of completeness.

\begin{theorem}\label{CDExist}
Let $C$ be a closed and let $O$ be an open axially symmetric subset of $\F_0$ such that $C\subset O$ and such that $\partial O$ is nonempty and bounded. Then there exists a slice Cauchy domain $U$ such that $C\subset U$ and $\overline{U}\subset O$ and such that $U$ is unbounded if $O$ is unbounded. 
\end{theorem}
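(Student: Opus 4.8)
The plan is to reduce the statement to the classical complex case by slicing with a complex plane $\C_I$ and then taking the axially symmetric hull of a suitable complex Cauchy domain. First I would fix an arbitrary imaginary unit $I \in \ss$ and consider the sets $C_I := C \cap \C_I$ and $O_I := O \cap \C_I$ in the plane $\C_I \cong \C$. Since $C$ is closed and $O$ is open with $C \subset O$, the same holds for $C_I \subset O_I$; moreover $C_I$ and $O_I$ are symmetric with respect to the real axis because $C$ and $O$ are axially symmetric. A short argument is needed to see that $\partial O_I = (\partial O) \cap \C_I$ is nonempty: any point of $\partial O$ lies on a sphere $[x]$ that meets $\C_I$, and since $O$ is axially symmetric that intersection point lies in $\partial O_I$. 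So $\partial O_I$ is nonempty and bounded (being contained in the bounded set $\partial O$).

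Next I would invoke the classical planar result: given a closed set $C_I$ inside an open set $O_I$ in $\C$ with $\partial O_I$ nonempty and bounded, there exists a Cauchy domain $W$ in $\C_I$ with $C_I \subset W$ and $\overline{W} \subset O_I$, and with $W$ unbounded whenever $O_I$ is. (This is the statement referred to as ``well known in the complex case''; it is the standard construction of a Cauchy domain separating a compact piece of $C_I$ from the complement of $O_I$ by finitely many smooth Jordan curves, handling the unbounded component by also enclosing a neighborhood of infinity when $O_I$ is unbounded.) To obtain an \emph{axially symmetric} complex Cauchy domain I would then replace $W$ by its symmetrization $W' := W \cap \overline{W}^{\,*}$, where $z^* = \overline z$ denotes complex conjugation in $\C_I$; since $C_I$, $O_I$ are conjugation-symmetric and $C_I \subset W \subset \overline W \subset O_I$, the set $W'$ is still a Cauchy domain with $C_I \subset W' \subset \overline{W'} \subset O_I$, it is symmetric under conjugation, and it remains unbounded if $O_I$ is. (One must check that intersecting with a conjugate preserves the Cauchy-domain properties — finitely many components with disjoint closures, boundary consisting of finitely many disjoint piecewise-$C^1$ Jordan curves — which is routine since the boundary of $W'$ is contained in $\partial W \cup \partial W^*$.)

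Finally I would set $U := [W']$, the axially symmetric hull of $W'$. By \Cref{extLem}(i) applied to each connected component of $W'$ (or directly), $U$ is axially symmetric and open, and $U \cap \C_J$ is (a rotated copy of) $W'$ for every $J \in \ss$, hence a Cauchy domain; so $U$ is a slice Cauchy domain. Since $C$ is axially symmetric and $C_I = C \cap \C_I \subset W'$, axial symmetry gives $C = [C_I] \subset [W'] = U$; similarly $\overline{U} = [\overline{W'}] \subset [O_I] = O$ because $O$ is axially symmetric and $\overline{W'} \subset O_I$. And $U$ is unbounded iff $W'$ is unbounded iff $O_I$ is unbounded iff $O$ is unbounded.

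The main obstacle I expect is not any single deep step but the bookkeeping around connectedness and symmetrization: verifying that the conjugation-symmetrized planar Cauchy domain $W'$ still has boundary consisting of finitely many pairwise-disjoint Jordan curves (two curves from $W$ and $W^*$ could a priori intersect transversally, creating corners or requiring a further perturbation), and making sure the unbounded-component case is handled consistently under symmetrization. A clean way around this is to choose the planar curves from the outset to be symmetric under conjugation — e.g., by first taking $W$ slightly larger, then defining its boundary curves as level sets of a conjugation-invariant smooth exhausting function adapted to $O_I \setminus C_I$ — so that $W = W'$ automatically and no transversality issue arises. The rest ($C \subset U$, $\overline U \subset O$, the unboundedness equivalence) is then a direct consequence of axial symmetry and \Cref{extLem}.
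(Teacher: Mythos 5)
Your overall strategy is the same as the paper's: slice with a fixed $\C_I$, produce a planar Cauchy domain between $C_I$ and $O_I^c$ that is symmetric with respect to the real axis, take its axially symmetric hull, and check that every other slice $U\cap\C_J$ is the image of $U\cap\C_I$ under the homeomorphism $z_0+Iz_1\mapsto z_0+Jz_1$. That skeleton is correct, and your reductions ($C=[C_I]\subset[W']$, $\overline{[W']}=[\overline{W'}]\subset[O_I]=O$, the unboundedness bookkeeping, and the nonemptiness of $\partial O_I$ via axial symmetry) all go through.

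The one step that does not work as stated is the symmetrization $W':=W\cap W^{*}$. The intersection of a planar Cauchy domain with its reflection need not be a Cauchy domain: if $\partial W$ and $\partial W^{*}$ cross transversally, the boundary of the intersection has corner points shared by two arcs and cannot be written as a union of pairwise \emph{disjoint} Jordan curves, as the definition requires; worse, a piecewise-$C^{1}$ (even $C^{\infty}$) boundary arc such as the graph of $x\mapsto e^{-1/x^{2}}\sin(1/x)$ meets the real axis, and hence its own reflection, infinitely often, so $W\cap W^{*}$ can have infinitely many components. This is not a routine verification but the actual crux of the planar step. Your own ``clean way around'' --- building the planar domain to be conjugation-symmetric from the outset rather than symmetrizing afterwards --- is the right fix and is precisely what the paper does: it covers $\C_I$ by a honeycomb of congruent hexagons arranged symmetrically about the real axis, removes all cells meeting $O_I^{c}$, and takes the complement $U_I$, which is then a Cauchy domain that is automatically symmetric (this also supplies the ``classical planar result'' you invoke, following Taylor). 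So to make your argument complete you should drop the intersection step entirely and carry out (or cite) a symmetric construction of the planar Cauchy domain; as written, that construction is only gestured at via level sets of an exhausting function, which would still need an argument for finitely many components and piecewise-$C^{1}$ Jordan boundary curves.
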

\begin{proof}
Let $I\in\ss$ and set $C_I = C\cap\C_I$ and $O_I= O\cap\C_I$. We cover the plane $\C_I$ by a honeycomb network of non-overlapping congruent hexagons of side $\delta/4$ with 
\[0<\delta < \dist (C_I, O_I^c) := \inf\{|z-z'|:z\in C_I, z'\in O_I^c\},\]
 where $ O_I^c$ denotes the complement of $O_I$ in $\C_I$ and we choose this network symmetric with respect to the real axis. We call the closure of such a hexagon a cell and denote the set of all cells in our network by $\mathfrak{S}$. Set  
 \[S := \bigcup\{\Delta\in \mathfrak{S}: \Delta\cap O_I^c \neq \emptyset\}.\]
 By standard arguments, we deduce that $U_I := S^c$ is a Cauchy domain in $\C_I$ such that $C_I\subset U_I$ and $\overline{U_I}\subset O_I$, which is unbounded if $O_I$ is unbounded. We refer to the proof of \cite[Theorem ~3.3]{Taylor:1951} for the technical details.  Since both the network of hexagons and the set $O_I^c$ are symmetric with respect  to the real axis, the set $S$ and in turn also $U_I$ are symmetric with respect to the real axis.

Now set $U := [U_I]$, where $[U_I]$ is the axially symmetric hull of $U_I$. Since $U_I$ is symmetric with respect to the real axis, we have $U_I=U\cap\C_I$. Moreover, as $C_I\subset U_I $ and $\overline{U_I}\subset O_I$, we obtain $C = [C_I]\subset [U_I] = U$ and $\overline{U} = [\overline{U_I}] \subset [O_I] = O$. If $O$ is unbounded then $O_I$  and $U_I$ are unbounded. Thus, $U$ is unbounded too.

It remains to show that $U$ is actually a slice Cauchy domain. Let $J\in\ss$ and observe that $U\cap\C_J = \{z_0 + Jz_1: z_0 + I z_1 \in U_I\}$ because $U$ is axially symmetric and $U_I = U\cap\C_I$. Since the mapping $\Phi: z_0 + Iz_1 \mapsto z_0 + J z_1$ is a homeomorphism from $\C_I$ to $\C_J$ and the set $U_I$ is a Cauchy domain in $\C_I$, we conclude that $U\cap\C_J = \Phi(U_I)$ is a Cauchy domain.
 
\end{proof}

The boundary of a slice Cauchy domain in a complex plane $\C_I$ is of course symmetric with respect to the real axis. Hence, it can be fully described by the part that lies in the upper half plane $\C_I^+:=\{x_0+Ix_1: x_0\in\R, x_1\geq 0\}$. We specify this idea in the following.

\begin{definition}
For any path $\gamma:[0,1]\to\C_I$, we define the paths $(-\gamma)(t):=\gamma(1-t)$ and $\overline{\gamma}(t): = \overline{\gamma(t)}$.
\end{definition}
\begin{lemma} \label{PathSplit}
Let $\gamma$ be a Jordan curve in $\C_I$ whose image is symmetric with respect to the real axis. Then $\gamma_+ := \gamma\cap\C_I^+$ consists of a single curve and $\gamma = \gamma_+ \cup \gamma_-$ with $\gamma_- := -\overline{\gamma_+}$.
\end{lemma}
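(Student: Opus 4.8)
This is a purely topological statement about Jordan curves in a plane that are invariant under a reflection, so the slice structure plays no role. The plan is to pass from the parametrization to the image $\Gamma:=\gamma([0,1])$, which by definition of a Jordan curve is homeomorphic to a circle $S^1$, and to study the involution of $\Gamma$ induced by complex conjugation $c\colon z\mapsto\overline{z}$ on $\C_I$. By hypothesis $c(\Gamma)=\Gamma$, so $c$ restricts to a self-homeomorphism $c|_\Gamma$ with $(c|_\Gamma)^2=\mathrm{id}$ whose fixed point set is exactly $\Gamma\cap\R$. Viewing $\gamma$ as a homeomorphism of $S^1$ onto $\Gamma$, we get an involution $\psi:=\gamma^{-1}\circ c|_\Gamma\circ\gamma$ of $S^1$ with fixed point set $\gamma^{-1}(\Gamma\cap\R)$, and the whole lemma will be read off from the fixed points and orbit structure of $\psi$.

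First I would record two elementary facts. The image $\Gamma$ is not contained in $\R$, since $S^1$ does not embed into $\R$; hence $c|_\Gamma\neq\mathrm{id}$ and $\psi$ is nontrivial. And $\Gamma\cap\R\neq\emptyset$: otherwise the connected set $\Gamma$ would lie entirely in one of the two open half-planes of $\C_I\setminus\R$, and then $c(\Gamma)$ would lie in the other, contradicting $c(\Gamma)=\Gamma\neq\emptyset$. So $\psi$ is a nontrivial involution of $S^1$ with at least one fixed point. Next I would pin down such $\psi$ by one-dimensional arguments: an orientation-preserving involution of $S^1$ fixing a point $p$ restricts to an increasing involution of $S^1\setminus\{p\}\cong\R$, and an increasing involution of $\R$ is the identity (if $g(x_0)>x_0$, applying the increasing map $g$ gives $x_0=g(g(x_0))>g(x_0)>x_0$); hence, being nontrivial, $\psi$ is orientation-reversing. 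Restricting $\psi$ to the complement of one of its (already known) fixed points gives a decreasing involution of $\R$, which has exactly one fixed point — existence because $g(x)-x$ changes sign as $x\to\pm\infty$, uniqueness because $g$ is strictly decreasing — so $\psi$ has exactly two fixed points.

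Transporting back, $\Gamma\cap\R$ consists of exactly two points $a,b$, and $\Gamma\setminus\{a,b\}$ is a disjoint union of two open arcs $A$ and $B$, which $c$ permutes because $c(\Gamma)=\Gamma$ and $c$ fixes $a$ and $b$. Each of $A,B$ is connected and disjoint from $\R$, hence contained in a single open half-plane of $\C_I\setminus\R$; since $c$ interchanges these two half-planes it cannot fix $A$, so $c(A)=B$, and we may take $A$ in the open upper half-plane $\C_I^+\setminus\R$ and $B$ in the complementary open half-plane $\C_I\setminus\C_I^+$. Consequently $\Gamma\cap\C_I^+=A\cup\{a,b\}=\overline{A}$ is a single closed sub-arc of $\Gamma$, so its preimage under $\gamma$ is a single closed sub-arc of $S^1$, and the part $\gamma_+=\gamma\cap\C_I^+$ of $\gamma$ lying in $\C_I^+$ is, after reparametrization, a single curve. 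The complementary part of $\gamma$ runs through $\overline{B}=c(\overline{A})$; following how $\gamma$ leaves $\C_I^+$ through $a$ and re-enters through $b$ (or vice versa) one checks that this complementary part, carrying the orientation induced by $\gamma$, is precisely $-\overline{\gamma_+}$, which gives $\gamma=\gamma_+\cup\gamma_-$ with $\gamma_-=-\overline{\gamma_+}$ up to reparametrization.

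The step I expect to require the most care is the last one: turning the set-level equality $\Gamma=\overline{A}\cup\overline{B}$ into the claimed statement about the oriented curve $\gamma$, in particular checking that the orientation conventions yield $\gamma_-=-\overline{\gamma_+}$ rather than $\overline{\gamma_+}$, and that it is immaterial which of $a,b$ is the initial endpoint and in which rotational sense $\gamma$ is traversed. The topological ingredient on involutions of $S^1$ is standard and, as sketched, entirely elementary, so the only genuine bookkeeping is in matching it cleanly to the stated formula.
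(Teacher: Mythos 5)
Your proof is correct, but it takes a genuinely different route from the paper's. The paper works directly with a chosen constant-speed parametrization starting at a real point of the curve: it observes that $t\mapsto\overline{\gamma(t)}$ and $t\mapsto\gamma(1-t)$ are both constant-speed parametrizations of the same Jordan curve with the same starting point and reversed orientation, hence equal, deduces the functional identity $\gamma=-\overline{\gamma}$, and reads off from it that $\gamma(0)$ and $\gamma(1/2)$ are the only real points and that $\gamma_+(t):=\gamma(t/2)$ does the job. You instead discard the parametrization, regard conjugation as an involution of the image $\Gamma\cong S^1$, and invoke the elementary classification of involutions of the circle (orientation-preserving with a fixed point $\Rightarrow$ identity; hence orientation-reversing, with exactly two fixed points) to obtain $|\Gamma\cap\R|=2$ and the fact that conjugation swaps the two complementary open arcs. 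Both arguments are sound. Your version is marginally more general and cleaner in its hypotheses --- the constant-speed argument presupposes rectifiability, which holds here because the boundary curves are piecewise continuously differentiable but plays no role in your approach --- and it isolates the purely topological content of the statement. The paper's version, in exchange, produces an explicit parametrization $\gamma(t)=\gamma_+(2t)$ for $t\in[0,1/2]$ and $\gamma(t)=\overline{\gamma_+}(2-2t)$ for $t\in[1/2,1]$, whereas you get $\gamma_-=-\overline{\gamma_+}$ only up to reparametrization; that is all the subsequent contour integrals require, so nothing is lost. The orientation bookkeeping you flag at the end is not a gap: the complementary piece of $\gamma$ and $-\overline{\gamma_+}$ are both injective paths from $b$ to $a$ with image $\overline{B}$, hence coincide up to orientation-preserving reparametrization.
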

\begin{proof}
Since its image is symmetric with respect to the real axis, $\gamma$ must take values in the upper and in the lower complex halfplane. Hence, as it is closed and continuous, it intersects the real line at least twice: once passing from the lower to the upper halfplane and once passing from the upper to the lower halfplane. Consider now a parametrization $\gamma(t)$, $t\in[0,1]$ of $\gamma$ with constant speed such that $\gamma(0)\in\R$ and such that $\gamma(t)\in\C_I^+$ for $t$ small enough. Then $\overline{\gamma}(t):=\overline{\gamma(t)}$ defines a parametrization of $\gamma$ with inverse orientation and constant speed because the image of $\gamma$ is symmetric with respect to the real axis. Since $(-\gamma)(t):=\gamma(1-t)$ is also a parametrization of $\gamma$ with inverse orientation and the same speed and starting point, we deduce $-\gamma = \overline{\gamma}$ and in turn $\gamma = - \overline{\gamma}$. Thus, $\gamma(1/2) = (-\overline{\gamma})(1/2) = \overline{\gamma(1/2)}$ and hence $\gamma(1/2)\in\R$. Moreover, there are no other points of $\gamma$ that lie on the real line: if $\gamma(\tau)\in\R$ for some $\tau\notin\{0,1/2\}$, then $\gamma(\tau) = \overline{\gamma(\tau)} = \gamma(1-\tau)$, which yields in a contradictions as $\gamma$ does not intersect itself.

Therefore, $\gamma_+(t):=\gamma(t/2)$, $t\in[0,1]$ takes values in $\C_I^+$. Otherwise, by continuity, it would have to intersect the real line when passing from the upper to the lower halfplane, which is impossible by the above argumentation. Moreover, the image of $\gamma_+$  coincides with $\gamma\cap\C_I^+$ because $\gamma = -\overline{\gamma}$ and hence
\[\gamma\setminus \gamma_+ = \left\{\gamma(t): \frac12 < t <1 \right\} = \left\{\overline{\gamma(t)}:0<t<\frac12\right\} = \left\{\overline{\gamma_+(t)}: 0<t<1\right\}, \]
which is a subset of $\C_I\setminus\C_I^+$ as $\gamma_+(t)\in\C_I^+$.

Finally, $\gamma(t) = \gamma_+(2t)$ if $t\in[0,1/2]$ and  $\gamma(t) = \overline{\gamma_+}(2-2t)$ if $t\in[1/2,1]$ and hence $\gamma = \gamma_+\cup\gamma_-$.

\end{proof}
Let now $U$ be a slice Cauchy domain and consider any $I\in\ss$. The boundary ${\partial(U\cap\C_I)}$ of $U$ in $\C_I$ consists of a finite union of piecewise continuously differentiable Jordan curves and is symmetric with respect to the real axis. Hence, whenever a curve $\gamma$ belongs to $\partial(U\cap\C_I)$, the curve $-\overline{\gamma}$ belongs to $\partial (U\cap\C_I)$ too. We can therefore decompose $\partial(U\cap\C_I)$ as follows:  
\begin{itemize}
\item First define $\gamma_{+,1},\ldots,\gamma_{+,\kappa}$ as those Jordan curves that belong to ${\partial(U\cap\C_I)}$ and lie entirely in the upper complex halfplane $\C_I^+$. Then the curves $ -\overline{\gamma_{+,1}}, \ldots, -\overline{\gamma_{+,\kappa}}$ are exactly those Jordan curves that belong to $\partial(U\cap\C_I)$ and lie entirely in the lower complex halfplane $\C_I^-$
\item In a second step consider the curves $\gamma_{\kappa+1},\ldots, \gamma_{N}$ that belong to $\partial(U\cap\C_I)$ and take values both in $\C_I^+$ and $\C_I^-$. Define $\gamma_{+,\ell}$ for $\ell = \kappa+1,\ldots,N$ as the part of $\gamma_{\ell}$ that lies in $\C_I^+$ and $\gamma_{-,\ell}$ as the part of $\gamma_{\ell}$ that lies in $\C_I^-$, cf. \Cref{PathSplit}.
\end{itemize}
Overall, we obtain the following decomposition of $\partial(U\cap\C_I)$:
\[ \partial(U\cap\C_I) = \bigcup_{1\leq\ell\leq N} \gamma_{+,\ell}\cup -\overline{\gamma_{+,\ell}}.\]
\begin{definition}\label{UpperPart}
We call the set $\{\gamma_{1,+},\ldots, \gamma_{N,+}\}$ the part of $\partial(U\cap\C_I)$ that lies in $\C_I^+$. 
\end{definition}

\subsection{A direct definition and algebraic properties of the $S$-functional calculus for closed operators}
In order to define the $S$-functional calculus for arbitrary operators in $\closOP(V)$ with nonempty $S$-resolvent set properly, we have to show that the respective Cauchy integral is independent of the choice the the slice Cauchy domain over which and the complex plane on which we integrate. We follow the strategy of \cite{Colombo:2011}.
\begin{theorem}\label{PreCalc}
Let $T\in\closOP(V)$ with $\rho_S(T)\neq\emptyset$. If $f\in\lhol(\sigma_S(T)\cup\{\infty\})$, then there exists an unbounded slice Cauchy domain $U$ such that $\sigma_S(T)\subset U$ and $\overline{U}\subset\fdom(f)$ and
\begin{equation}\label{HalfCalc}
\frac{1}{2\pi}\int_{\partial( U\cap\C_I)} S_L^{-1}(s,T) \, ds_I \, f(s) \in\boundOP(V),
\end{equation}
where the value of this integral is the same for any choice of the imaginary unit $I\in\ss$ and for any choice of $U$ satisfying the above conditions. 

Similarly, if $f\in\rhol(\sigma_S(T)\cup\{\infty\})$, then there exists an unbounded slice Cauchy domain $U$ such that $\sigma_S(T)\subset U$ and $\overline{U}\subset\fdom(f)$ and
\[\frac{1}{2\pi}\int_{\partial( U\cap\C_I)} f(s)\,ds_I\, S_R^{-1}(s,T)\in\boundOP(V),\]
where the value of this integral is the same for any choice of the imaginary unit $I\in\ss$ and for any choice of $U$ satisfying the above conditions.
\end{theorem}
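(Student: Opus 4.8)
The plan is to show two things: first, that there exists at least one unbounded slice Cauchy domain $U$ with $\sigma_S(T)\subset U$ and $\overline{U}\subset\fdom(f)$; second, that the integral in \eqref{HalfCalc} is a bounded operator whose value does not depend on the admissible $U$ and on the chosen imaginary unit $I\in\ss$.

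For existence, I would apply \Cref{CDExist} with $C := \sigma_S(T)$ and $O$ an open axially symmetric set containing $\sigma_S(T)$, contained in $\fdom(f)$, and unbounded (this uses that $f$ is left slice hyperholomorphic at infinity, so $\fdom(f)$ contains a neighborhood of $\infty$; the set $O$ can be taken as the union of such a neighborhood with an axially symmetric neighborhood of the compact part of $\sigma_S(T)$ sitting inside $\fdom(f)$, shrunk so that $\partial O$ is bounded and nonempty — here I would use that $\rho_S(T)\ne\emptyset$ so that $\sigma_S(T)\ne\F_0$). \Cref{CDExist} then produces the required unbounded slice Cauchy domain $U$. That $S_L^{-1}(s,T)$ is bounded for each $s\in\rho_S(T)$ (in particular on $\partial(U\cap\C_I)$, which lies in $\rho_S(T)$ since $\overline U\subset\fdom(f)$ and $\fdom(f)\cap[s]=\emptyset$ is not automatic — rather $\partial(U\cap\C_I)\subset\overline U\setminus\sigma_S(T)\subset\rho_S(T)$) follows from \Cref{RkResExtension}, and the map $s\mapsto S_L^{-1}(s,T)$ is continuous (indeed right slice hyperholomorphic) on $\rho_S(T)$ by \Cref{ResProp}(ii); since $\partial(U\cap\C_I)$ is a compact set made of finitely many piecewise $C^1$ Jordan curves and $f$ is continuous there, the integral converges in operator norm and defines an element of $\boundOP(V)$.

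For independence of $I$ and $U$, I would follow the strategy of \cite{Colombo:2011}. Fix two admissible pairs $(U_1,I)$ and $(U_2,J)$. The key point is to reduce to a Cauchy-integral-theorem argument. First, reduce to a common imaginary unit by applying a suitable $\F$-valued version of Cauchy's integral theorem (\Cref{CauchyThm}, in the operator-valued form of \Cref{OPVal}): using that $S_L^{-1}(s,T)$ is right slice hyperholomorphic in $s$ and $f$ is left slice hyperholomorphic, and that on a slice Cauchy domain the boundary integral over $\partial(W\cap\C_I)$ equals that over $\partial(W\cap\C_J)$ for any $I,J$. Second, with a common $I$, take a third unbounded slice Cauchy domain $U_3$ with $\sigma_S(T)\subset U_3\subset\overline{U_3}\subset U_1\cap U_2$ (again via \Cref{CDExist}); the difference of the integrals over $\partial(U_1\cap\C_I)$ and $\partial(U_3\cap\C_I)$ is an integral over the boundary of the slice Cauchy domain $U_1\setminus\overline{U_3}$, on which both $s\mapsto S_L^{-1}(s,T)$ and $f$ are slice hyperholomorphic (the former because this region avoids $\sigma_S(T)$, the latter because it sits inside $\fdom(f)$), so \Cref{CauchyThm} gives zero; the unbounded component contributes nothing because one can cap it off at a large sphere where, by definition of slice hyperholomorphicity at infinity, the contribution vanishes in the limit (here the hypothesis $\rho_S(T)\cap\R\ne\emptyset$ is \emph{not} used — only boundedness of $S_L^{-1}(s,T)$ at large $|s|$, which follows from continuity on $\rho_S(T)$ together with the behaviour at $\infty$, and the existence of $f(\infty)$). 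The same comparison with $U_2$ finishes the argument, and the right slice hyperholomorphic case is identical after exchanging the roles of left/right kernels and using the second statement of \Cref{CauchyThm}.

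The main obstacle I expect is the careful handling of the unbounded component: unlike the bounded case in \cite{Colombo:2011}, the curves $\partial(U\cap\C_I)$ need not enclose a compact region, so the Cauchy-theorem step must be supplemented by an estimate showing that the integral over a large circular arc $\{|s|=R\}\cap\C_I$ tends to zero (or to a fixed contribution absorbed on both sides) as $R\to\infty$. This requires the decay/limit behaviour of $S_L^{-1}(s,T)$ as $s\to\infty$ — namely $S_L^{-1}(s,T)\to 0$, which one reads off from \eqref{SresolvoperatorL} and the fact that $\|\Q_s(T)^{-1}\|\to 0$ like $|s|^{-2}$ while $\|T\Q_s(T)^{-1}\|$ stays bounded — combined with $\sup_{|s|=R}\|f(s)\|$ being bounded as $R\to\infty$ because $f(\infty)$ exists. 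Once this technical lemma is in place, the rest is a routine assembly of \Cref{CDExist} and \Cref{CauchyThm}.
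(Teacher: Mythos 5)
Your skeleton matches the paper's on two of the three points: existence of $U$ via \Cref{CDExist} applied to $\fdom(f)$ with an axially symmetric neighbourhood of a resolvent point removed (so that $\partial O$ is nonempty and bounded — exactly where $\rho_S(T)\neq\emptyset$ enters), and boundedness of the integral from continuity of the integrand on the compact boundary. For independence of the Cauchy domain your idea is also the right one, but the ``main obstacle'' you identify is a phantom, and your proposed cure for it does not work. Since every unbounded slice Cauchy domain containing $\sigma_S(T)$ has exactly one unbounded component, which contains a neighbourhood of $\infty$, the difference $U_1\setminus\overline{U_3}$ of two admissible domains is automatically a \emph{bounded} slice Cauchy domain, so \Cref{CauchyThm} applies directly: no large sphere, no limit $R\to\infty$, and no decay of $S_L^{-1}(s,T)$ are needed. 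This matters because the decay you invoke ($\|\Q_s(T)^{-1}\|\sim|s|^{-2}$, hence $S_L^{-1}(s,T)\to0$) is not available at this level of generality: for a closed $T$ with merely $\rho_S(T)\neq\emptyset$ the $S$-spectrum may be unbounded, so $S_L^{-1}(s,T)$ need not even be defined for all large $|s|$, let alone tend to zero.

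The genuine gap is the independence of the imaginary unit. You reduce it to the assertion that ``on a slice Cauchy domain the boundary integral over $\partial(W\cap\C_I)$ equals that over $\partial(W\cap\C_J)$,'' but that assertion \emph{is} the claim to be proved, and \Cref{CauchyThm} — which only says that a single-plane boundary integral of a right-times-left slice hyperholomorphic pair vanishes — does not yield it. The mechanism the paper uses, and which is absent from your proposal, is: pass from $U$ to a smaller admissible $W$ with $\overline{W}\subset U$; write $f(s)=\frac{1}{2\pi}\int_{\partial(U\cap\C_J)}S_L^{-1}(p,s)\,dp_J\,f(p)$ for $s\in\partial(W\cap\C_I)$ by \Cref{Cauchy} in the plane $\C_J$; exchange the order of integration by Fubini; use $S_L^{-1}(p,s)=-S_R^{-1}(s,p)$ from \Cref{SProps}; and apply the Cauchy formula a second time, now to the operator-valued right slice hyperholomorphic function $s\mapsto S_L^{-1}(s,T)$ on $\overline{W^c}$ evaluated at $p\in W^c$, to recover $S_L^{-1}(p,T)$ and hence the integral over $\partial(U\cap\C_J)$. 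Without an argument of this kind (or an explicit reduction to holomorphic components via the Splitting Lemma), the $I$-independence in your write-up is assumed rather than proved.
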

\begin{proof}
Let $f\in\lhol(\sigma_S(T)\cup\{\infty\})$  and $p\in\rho_S(T)$. Since $\rho_S(T)$ is open, there exists a closed ball $\overline{B_{\varepsilon}(p)}\subset\rho_S(T)$ and since $\rho_S(T)$ is axially symmetric we have
\[ \left[\overline{B_{\varepsilon}(p)}\right] = \{ s = s_0 + I s_1\in V: (s_0 - p_0)^2 + (s_1 - p_1)^2 \leq \varepsilon\}\subset\rho_S(T).\]
The existence of the slice Cauchy domain $U$ now follows from \Cref{CDExist} applied with $C = \sigma_S(T)$ and $O = \fdom(F)\cap \left[\overline{B_{\varepsilon}(p)}\right]^c$.

 The integral defines a bounded operator because the boundary of $U$ in $\C_I$ consists of a finite set of closed piecewise differentiable Jordan curves and the integrand is continuous and hence bounded on the compact set $\partial(U\cap\C_I)$. 

We now show the independence of the slice Cauchy domain. Consider first the case of another unbounded slice Cauchy domain $U'$ such that $\sigma_S(T)\subset U'$ and $\overline{U'}\subset U$. Then $W = U\setminus {\overline{U'}}$ is a bounded slice Cauchy domain and 
\[\partial(W\cap\C_I) = \partial(U\cap\C_I) \cup - \partial (U'\cap\C_I),\] 
where $-\partial(U'\cap\C_I)$ denotes the inversely orientated boundary of $U'$ in $\C_I$. Moreover, the function $s\mapsto S_L^{-1}(s,T)$ is right and the function $s\mapsto f(s)$ is left slice hyperholomorphic on $\overline{W}$. Thus, \Cref{CauchyThm} implies
\begin{align*}
 0 &= \frac{1}{2\pi} \int_{\partial(W\cap\C_I)}S_L^{-1}(s,T)\, ds_I\, f(s) \\
 &= \frac{1}{2\pi}\int_{\partial(U\cap\C_I)} S_L^{-1}(s,T)\, ds_I\, f(s) - \frac{1}{2\pi}\int_{\partial(U'\cap\C_I)}S_L^{-1}(s,T)\, ds_I\, f(s).
 \end{align*}
If however $\overline{U'}$ is not contained in $U$, then $U\cap U'$ is an axially symmetric open set with nonempty and bounded boundary and it contains $\sigma_S(T)$. By \Cref{CDExist} there exist a third slice Cauchy domain $W$ such that $\sigma_S(T)\subset W$ and $\overline{W}\subset U\cap U'$ and by the above argumentation all of them yield the same operator in \eqref{HalfCalc}.

Finally, we consider another imaginary unit $J\in\ss$ and another slice Cauchy domain $W$ with $\sigma_S(T)\subset W$ and $\overline{W}\subset U$. By the above argumentation and \Cref{Cauchy}, we have
\begin{gather*}
\frac{1}{2\pi}\int_{\partial(U\cap\C_I)} S_L^{-1}(s,T)\,ds_I\, f(s) = \frac{1}{2\pi}\int_{\partial(W\cap\C_I)} S_L^{-1}(s,T)\, ds_I\, f(s) \\
= \frac{1}{(2\pi)^2}\int_{\partial(W\cap\C_I)} S_L^{-1}(s,T)\,ds_I\,\int_{\partial(U\cap\C_J)} S_L^{-1}(p,s)\,dp_J\,f(p)\\
=-\frac{1}{(2\pi)^2}\int_{\partial(U\cap\C_J)}\int_{\partial(W^c\cap\C_I)} S_L^{-1}(s,T)\,ds_I\, S_L^{-1}(p,s)\,dp_J\,f(p),
\end{gather*}
where Fubini's theorem allows us to exchange the order of integration in the last equation because we can integrate a bounded function over a finite domain.
Now observe that $S_L^{-1}(p,s) = - S_R^{-1}(s,p)$ by \Cref{SProps} and that the left $S$-resolvent $S_L^{-1}(s,T)$ is right slice hyperholomorphic in $s$ on $\overline{W^c}$. Since any $p\in\partial (U\cap\C_I)$ belongs to  $W^c$ by our choices of $U$ and $W$, we deduce from \Cref{Cauchy} that
\begin{gather*}
\frac{1}{2\pi}\int_{\partial(U\cap\C_I)} S_L^{-1}(s,T)\,ds_I\, f(s) =\\
= \int_{\partial(U\cap\C_J)}\frac{1}{(2\pi)^2}\int_{\partial(W^c\cap\C_I)} S_L^{-1}(s,T)\,ds_I\, S_R^{-1}(s,p)\,dp_J\,f(p)\\ 
= \int_{\partial(U\cap\C_J)} S_L^{-1}(p,T)\,dp_J\,f(p).
\end{gather*}

\end{proof}
\begin{definition}\label{DefiCalc}
Let $T\in\closOP(V)$ with $\rho_S(T)\neq\emptyset$. For $f\in\lhol(\sigma_S(T)\cup\{\infty\})$, we define
\begin{equation}\label{LCalc}
 f(T) := f(\infty)\id + \frac{1}{2\pi}\int_{\partial(U\cap\C_I)} S_L^{-1}(s,T)\,ds_I\, f(s),
 \end{equation}
and for $f\in\rhol(\sigma_S(T)\cup\{\infty\})$, we define
\begin{equation}\label{RCalc}
f(T) := f(\infty) \id + \frac{1}{2\pi}\int_{\partial(U\cap\C_I)} f(s)\,ds_I\, S_R^{-1}(s,T),
\end{equation}
where $I\in\ss$ and $U$ is any slice Cauchy domain as in \Cref{PreCalc}.
\end{definition}
\begin{remark}\label{OldConsistent}
Obviously, by \Cref{CSCalcInt}, our approach is consistent with the one used in \cite{Colombo:2011} if $\rho_S(T)\cap\R\neq\emptyset$. Moreover, it also includes the case of bounded operators: if $f\in\lhol(\sigma_S(T))$ for a bounded operator $T$, then, since the slice domain $U$ in \Cref{BSCalc} is bounded and we do not require connectedness of $\fdom(f)$ in \Cref{DefiCalc}, we can choose $r>0$ such that $\overline{U}$ is contained in the ball $B_{r}(0)$. We might then extend $f$ to a function in $\lhol(\sigma_{S}(T)\cup\{\infty\})$, for instance by setting  $f(s) = c$ with $c\in\F$ on $\F_0\setminus B_{r}(0)$, and use the unbounded slice Cauchy domain $(\F_0\setminus B_{r}(0))\cup U$ in \eqref{LCalc}. But since the left $S$-resolvent is then right slice hyperholomorphic on $\F_0\setminus B_{r}$ and $f(s)$ is left slice hyperholomorphic on this set, we obtain
\[ \begin{split}f(T) &= f(\infty)\id + \frac{1}{2\pi} \int_{-\partial( B_{r}(0)\cap\C_I)} f(s)\,ds_I\, S_L^{-1}(s,T) + \frac{1}{2\pi}\int_{\partial(U\cap\C_I)} f(s)\,ds_I\,S_L^{-1}(s,T)\\
=&\frac{1}{2\pi}\int_{\partial(U\cap\C_I)} f(s)\,ds_I\,S_L^{-1}(s,T)\end{split}\] 
because \Cref{CauchyThm} and \Cref{OPVal} imply that the sum of $f(\infty)\id$ and the integral over the boundary of $B_{r}(0)$ vanishes.
\end{remark}
\begin{example}\label{IDWorks}
Let $T\in\closOP(V)$ with $\rho_S(T)\neq\emptyset$. Consider the left slice hyperholomorphic function $f(s)= a$ for some $a\in\F$ and choose an arbitrary unbounded slice Cauchy domain $U$ with $\sigma_S(T)\subset U$ and an imaginary unit $I$. Then
\begin{equation}\label{ID}f(T) = f(\infty)\id + \frac{1}{2\pi}\int_{\partial(U\cap\C_I)} S_L^{-1}(s,T)\,ds_I\,f(s) = a\id, \end{equation}
because $f(\infty) = a$ and the integral vanishes by \Cref{CauchyThm} and \Cref{OPVal} as the left $S$-resolvent is right slice hyperholomorphic in $s$ on a superset of $\F_0\setminus U$ and vanishes at infinity. An analogue argument shows that also $f(T) = \id a=a\id$ if $f$ is considered right slice hyperholomorphic.
\end{example}

The following algebraic properties of the $S$-functional calculus immediately follow from the left and right linearity of the integral.
\begin{corollary}\label{AlgProp}
Let $T\in\closOP(V)$ with $\rho_S(T)\neq\emptyset$.
\begin{enumerate}[(i)]
\item If $f,g\in\lhol(\sigma_S(T)\cup\{\infty\})$ and $a\in\F$, then
\[(f+g)(T) = f(T) + g(T)\quad\text{and}\quad (fa)(T) = f(T)a.\]
\item If $f,g\in\rhol(\sigma_S(T)\cup\{\infty\})$ and $a\in\F$, then
\[(f+g)(T) = f(T) + g(T)\quad\text{and}\quad (af)(T) = af(T).\]
\end{enumerate}
\end{corollary}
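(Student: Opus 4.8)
The plan is to reduce the whole statement to two elementary facts: the (left and right) $\R$-linearity of the contour integral appearing in \eqref{LCalc} and \eqref{RCalc}, and the corresponding linearity of $\lim_{s\to\infty}$. The only preparatory step is to make sure that all the operators in question can be represented over \emph{one and the same} slice Cauchy domain.

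First I would fix $p\in\rho_S(T)$ and a closed ball $\overline{B_\varepsilon(p)}\subset\rho_S(T)$ exactly as in the proof of \Cref{PreCalc}, and apply \Cref{CDExist} with $C=\sigma_S(T)$ and $O=\fdom(f)\cap\fdom(g)\cap[\overline{B_\varepsilon(p)}]^c$. This yields a single unbounded slice Cauchy domain $U$ with $\sigma_S(T)\subset U$ and $\overline{U}\subset\fdom(f)\cap\fdom(g)$; I also fix one imaginary unit $I\in\ss$. Note that $f+g\in\lhol(\fdom(f)\cap\fdom(g))$ and $fa\in\lhol(\fdom(f))$ by the linearity of $\lhol$ recorded in the earlier corollary, and that both are left slice hyperholomorphic at infinity with $(f+g)(\infty)=f(\infty)+g(\infty)$ and $(fa)(\infty)=f(\infty)a$. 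Hence, by \Cref{PreCalc}, each of $f(T)$, $g(T)$, $(f+g)(T)$, $(fa)(T)$ may be computed from \eqref{LCalc} using this same $U$ and $I$.

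Now the computation is routine. For additivity, the integrand $S_L^{-1}(s,T)\,ds_I\,(f(s)+g(s))$ is the sum of $S_L^{-1}(s,T)\,ds_I\,f(s)$ and $S_L^{-1}(s,T)\,ds_I\,g(s)$, and the integral over $\partial(U\cap\C_I)$ — a Riemann integral of a continuous $\boundOP(V)$-valued function over a compact, piecewise-$C^1$ contour — is additive; combining this with $(f+g)(\infty)\id=f(\infty)\id+g(\infty)\id$ gives $(f+g)(T)=f(T)+g(T)$. For the scalar rule, right multiplication by the fixed element $a\in\F$ is a bounded $\R$-linear map on $\boundOP(V)$ and therefore commutes with the contour integral, so $\frac{1}{2\pi}\int_{\partial(U\cap\C_I)}S_L^{-1}(s,T)\,ds_I\,f(s)a=\bigl(\frac{1}{2\pi}\int_{\partial(U\cap\C_I)}S_L^{-1}(s,T)\,ds_I\,f(s)\bigr)a$; together with $f(\infty)a\,\id=(f(\infty)\id)a$ (both equal the operator $v\mapsto f(\infty)av$), this yields $(fa)(T)=f(T)a$. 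The right slice hyperholomorphic case in (ii) is identical, working with \eqref{RCalc} in place of \eqref{LCalc} and pulling the constant $a$ out of the integral on the left rather than on the right.

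There is no genuine obstacle here; the only point needing care is the very first one — guaranteeing a common slice Cauchy domain so that the two integrals live on the same path — and this is precisely what \Cref{CDExist} supplies. Everything else is the elementary linearity of the integral and of the limit at infinity.
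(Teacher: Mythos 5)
Your proposal is correct and matches the paper, which simply asserts that the corollary "immediately follows from the left and right linearity of the integral"; your write-up fills in exactly the intended details (a common slice Cauchy domain via \Cref{CDExist} and \Cref{PreCalc}, linearity of the contour integral and of the limit at infinity, and the identification $(f(\infty)a)\id=(f(\infty)\id)a$). No issues.
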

\begin{remark}\Cref{PreCalc} ensures that these functional calculi are well-defined in the sense that they are independent of the choices of the imaginary unit $I\in\ss$ and the slice Cauchy domain $U$. However, they are not consistent unless one restricts to functions that are defined on axially symmetric slice domains. As we shall see in the following, there exist functions that are left and right slice hyperholomorphic such that \eqref{LCalc} and \eqref{RCalc} do not give the same operator, cf. \Cref{InconsRem} and \Cref{CEx}. However, at least for intrinsic functions  \eqref{LCalc} and \eqref{RCalc} are two representations for the same operator as we shall see now. 
\end{remark}
\begin{lemma}\label{BABABA}
Let $T\in\closOP(V)$ with $\rho_S(T)\neq\emptyset$ and let $f\in\intrin(\sigma_S(T)\cup\{\infty\})$. Furthermore consider a slice Cauchy domain $U$ such that $\sigma_S(T)\subset U$ and $\overline{U}\subset\fdom(f)$ and some imaginary unit $I\in\ss$. If $\gamma_{1},\ldots,\gamma_{N}$ is the part of $\partial(U\cap\C_I)$ that lies in $\C_I^+$ as in \Cref{UpperPart}, then
\begin{equation}\begin{split}\label{PosPlanInt}
&\int_{\partial(U\cap\C_I)}f(s)\,ds_I\,S_R^{-1}(s,T) \\
=& \sum_{\ell=1}^N\int_{0}^{1}2\Re\left(f(\gamma_{\ell}(t))(-I)\gamma_{\ell}'(t)\overline{\gamma_{\ell}(t)}
\right)\Q_{\gamma_{\ell}(t)}^{-1}(T)\,dt\\
&-\sum_{\ell=1}^N\int_{0}^{1}2\Re\Big(f(\gamma_{\ell}(t))(-I)\gamma_{\ell}'(t)\Big)T\Q_{\gamma_{\ell}(t)}^{-1}(T)\,dt.
\end{split}
\end{equation}
\end{lemma}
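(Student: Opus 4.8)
The plan is to evaluate the contour integral arc by arc, using the decomposition $\partial(U\cap\C_I)=\bigcup_{\ell=1}^{N}\bigl(\gamma_{\ell}\cup(-\overline{\gamma_{\ell}})\bigr)$ obtained in the discussion preceding \Cref{UpperPart}, to rewrite the integral over each lower arc $-\overline{\gamma_{\ell}}$ as an integral over $[0,1]$ after reflecting and reversing the parameter, and then to pair the two contributions using that $f$ is intrinsic. Concretely, the arc $-\overline{\gamma_{\ell}}$ admits the parametrization $t\mapsto\overline{\gamma_{\ell}(1-t)}$, whose derivative is $-\overline{\gamma_{\ell}'(1-t)}$, so the substitution $u=1-t$ gives
\[\int_{-\overline{\gamma_{\ell}}}f(s)\,ds_I\,S_R^{-1}(s,T)=-\int_{0}^{1}f(\overline{\gamma_{\ell}(t)})(-I)\,\overline{\gamma_{\ell}'(t)}\,S_R^{-1}(\overline{\gamma_{\ell}(t)},T)\,dt.\]
Adding $\int_{\gamma_{\ell}}f(s)\,ds_I\,S_R^{-1}(s,T)=\int_{0}^{1}f(\gamma_{\ell}(t))(-I)\gamma_{\ell}'(t)\,S_R^{-1}(\gamma_{\ell}(t),T)\,dt$ and summing over $\ell$, the left-hand side of \eqref{PosPlanInt} equals
\[\sum_{\ell=1}^{N}\int_{0}^{1}\Bigl(f(\gamma_{\ell}(t))(-I)\gamma_{\ell}'(t)\,S_R^{-1}(\gamma_{\ell}(t),T)-f(\overline{\gamma_{\ell}(t)})(-I)\overline{\gamma_{\ell}'(t)}\,S_R^{-1}(\overline{\gamma_{\ell}(t)},T)\Bigr)\,dt,\]
so the whole statement reduces to identifying this integrand, for each fixed $t$, with that of \eqref{PosPlanInt}.

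For that identification I would use three facts. First, by \eqref{SresolvoperatorR} the right $S$-resolvent operator is $S_R^{-1}(s,T)=\overline{s}\,\Q_s(T)^{-1}-T\Q_s(T)^{-1}$. Second, since $\Re(\overline{s})=\Re(s)$ and $|\overline{s}|=|s|$ we have $\Q_{\overline{s}}(T)=\Q_s(T)$, whence $S_R^{-1}(\overline{s},T)=s\,\Q_s(T)^{-1}-T\Q_s(T)^{-1}$. Third, as $f$ is intrinsic, \Cref{IntChar} gives $f(\overline{s})=\overline{f(s)}$ and $f(s),f(\overline s)\in\C_I$ for $s\in\partial(U\cap\C_I)$, so for $s=\gamma_{\ell}(t)$ all the scalars $f(s)$, $-I$, $\gamma_{\ell}'(t)$, $\overline{s}$, $s$ lie in the commutative field $\C_I$. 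Substituting these expressions into the integrand above and collecting, by distributivity and associativity of the scalar action, the $\Q_s(T)^{-1}$-part and the $T\Q_s(T)^{-1}$-part separately, the integrand becomes $(c_1-c_2)\,\Q_s(T)^{-1}-(d_1-d_2)\,T\Q_s(T)^{-1}$, where $c_1=f(s)(-I)\gamma_{\ell}'(t)\overline{s}$, $c_2=\overline{f(s)}(-I)\overline{\gamma_{\ell}'(t)}\,s$, $d_1=f(s)(-I)\gamma_{\ell}'(t)$ and $d_2=\overline{f(s)}(-I)\overline{\gamma_{\ell}'(t)}$.

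The one place where commutativity of $\C_I$ enters essentially is the identities $c_2=-\overline{c_1}$ and $d_2=-\overline{d_1}$: conjugation is an automorphism of $\C_I$ with $\overline{-I}=I$, hence $\overline{c_1}=\overline{f(s)}\,I\,\overline{\gamma_{\ell}'(t)}\,s$, which equals $-c_2$ after commuting the factors, and likewise $\overline{d_1}=-d_2$. Therefore $c_1-c_2=c_1+\overline{c_1}=2\Re(c_1)$ and $d_1-d_2=2\Re(d_1)$ are real scalars, and the integrand is precisely $2\Re\bigl(f(\gamma_{\ell}(t))(-I)\gamma_{\ell}'(t)\overline{\gamma_{\ell}(t)}\bigr)\Q_{\gamma_{\ell}(t)}^{-1}(T)-2\Re\bigl(f(\gamma_{\ell}(t))(-I)\gamma_{\ell}'(t)\bigr)T\Q_{\gamma_{\ell}(t)}^{-1}(T)$, which yields \eqref{PosPlanInt} after integrating over $[0,1]$ and summing over $\ell$. (Splitting the integral into its two pieces is legitimate because $t\mapsto\Q_{\gamma_{\ell}(t)}^{-1}(T)$ is piecewise continuous by continuity of the pseudo-resolvent on $\rho_S(T)$, and $t\mapsto T\Q_{\gamma_{\ell}(t)}^{-1}(T)$ is piecewise continuous since $T\Q_s(T)^{-1}=\overline{s}\,\Q_s(T)^{-1}-S_R^{-1}(s,T)$ and $s\mapsto S_R^{-1}(s,T)$ is continuous by \Cref{ResProp}.) I do not expect any genuine conceptual difficulty here; the care needed is entirely bookkeeping — the sign produced by the orientation reversal of $-\overline{\gamma_{\ell}}$, and the order in which the $\C_I$-valued scalars and the operators $\Q_s(T)^{-1}$ and $T\Q_s(T)^{-1}$ are multiplied.
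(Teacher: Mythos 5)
Your proof is correct and takes essentially the same route as the paper's: both split $\partial(U\cap\C_I)$ into the upper arcs $\gamma_\ell$ and their reflections $-\overline{\gamma_\ell}$, reparametrize the lower arcs via $t\mapsto\overline{\gamma_\ell(1-t)}$, and then use $f(\overline{s})=\overline{f(s)}$, $\Q_{\overline{s}}(T)=\Q_s(T)$ and commutativity within $\C_I$ to pair each term with its conjugate into $2\Re(\cdot)$. Your explicit bookkeeping with $c_1,c_2,d_1,d_2$ and the remark on piecewise continuity are just a more detailed rendering of the paper's computation.
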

\begin{proof}
We have
\begin{align*}
&\int_{\partial(U\cap\C_I)}f(s)\,ds_I\,S_R^{-1}(s,T) \\
=&\sum_{\ell=1}^N\int_{\gamma_{\ell}} f(s)\,ds_I\,S_R^{-1}(s,T) + \sum_{\ell=1}^N\int_{-\overline{\gamma_{\ell}}}f(s)\,ds_I\,S_R^{-1}(s,T)\\
=&\sum_{\ell = 1}^N\int_{0}^{1}f(\gamma_\ell(t)) (-I) \gamma_\ell'(t)\left(\overline{\gamma_{\ell}(t)}-T\right)\Q_{\gamma_{\ell}(t)}^{-1}(T)\,dt\\
&+\sum_{\ell = 1}^N\int_{0}^{1}f\left(\overline{\gamma_\ell(1-t)}\right) I \overline{\gamma_\ell'(1-t)}(\gamma_{\ell}(1-t)-T)\Q_{\overline{\gamma_{\ell}(1-t)}}^{-1}(T)\,dt.
\end{align*}
Since $f(\overline{x}) = \overline{f(x)}$ as $f$ is intrinsic and $\Q_{\overline{s}}(T) = \Q_{s}(T)$ for $s\in\rho_S(T)$, we get after a change of variables in the integrals of the second sum
\begin{align}
&\notag \int_{\partial(U\cap\C_I)}f(s)\,ds_I\,S_R^{-1}(s,T) \\
=&\notag \sum_{\ell = 1}^N\int_{0}^{1}f(\gamma_\ell(t)) (-I) \gamma_\ell'(t)\left(\overline{\gamma_{\ell}(t)}-T\right)\Q_{\gamma_{\ell}(t)}^{-1}(T)\,dt\\
&+\notag \sum_{\ell = 1}^N\int_{0}^{1}\overline{f(\gamma_\ell(t))(-I)\gamma_\ell'(t)}(\gamma_{\ell}(t)-T)\Q_{\gamma(t)}^{-1}(T)\,dt\displaybreak[1]\\
\notag\begin{split}
=& \sum_{\ell=1}^N\int_{0}^{1}2\Re\left(f(\gamma_{\ell}(t))(-I)\gamma_{\ell}'(t)\overline{\gamma_{\ell}(t)}
\right)\Q_{\gamma_{\ell}(t)}^{-1}(T)\,dt\\
&-\sum_{\ell=1}^N\int_{0}^{1}2\Re\Big(f(\gamma_{\ell}(t))(-I)\gamma_{\ell}'(t)\Big)T\Q_{\gamma_{\ell}(t)}^{-1}(T)\,dt.
\end{split}
\end{align}
\end{proof}

\begin{theorem}\label{IntRep}
Let $T\in\closOP(V)$ with $\rho_S(T)\neq \emptyset$. If $f\in\intrin(\sigma_S(T)\cup\{\infty\})$, then 
\[\frac{1}{2\pi}\int_{\partial(U\cap\C_I)} S_L^{-1}(s,T)\,ds_I\,f(s) = \frac{1}{2\pi}\int_{\partial(U\cap\C_I)} f(s)\,ds_I\, S_R^{-1}(s,T)\]
for any $I\in\ss$ and any slice Cauchy domain as in \Cref{PreCalc}.
\end{theorem}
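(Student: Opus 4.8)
The plan is to establish the identity by verifying that \emph{both} integrals coincide with the right-hand side of \eqref{PosPlanInt}. For the right-resolvent integral this is exactly \Cref{BABABA}, so the only work left is to run the analogous computation for $\int_{\partial(U\cap\C_I)}S_L^{-1}(s,T)\,ds_I\,f(s)$ and to show that it produces the same two sums.

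First I would decompose $\partial(U\cap\C_I)$ as $\bigcup_{\ell=1}^{N}\big(\gamma_\ell\cup(-\overline{\gamma_\ell})\big)$, where $\gamma_1,\dots,\gamma_N$ is the part lying in $\C_I^+$ in the sense of \Cref{UpperPart}, and expand the left $S$-resolvent as $S_L^{-1}(s,T)=\Q_s(T)^{-1}\overline{s}-T\Q_s(T)^{-1}$. On the arc $\gamma_\ell$ the integrand carries the scalars $(-I)\gamma_\ell'(t)$ and $f(\gamma_\ell(t))$ to the right of $S_L^{-1}(\gamma_\ell(t),T)$; since $f$ is intrinsic and $\gamma_\ell(t)\in\C_I$ we have $f(\gamma_\ell(t))\in\C_I$ by \Cref{IntChar}, so these scalars, together with $\overline{\gamma_\ell(t)}$, all lie in $\C_I$ and commute with one another. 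On the arc $-\overline{\gamma_\ell}$ I would use the parametrization $t\mapsto\overline{\gamma_\ell(1-t)}$, invoke the intrinsicity identity $f(\overline{s})=\overline{f(s)}$ together with $\Q_{\overline{s}}(T)=\Q_s(T)$ --- valid because $\sigma_S(T)\subset U$ forces $\partial(U\cap\C_I)\subset\rho_S(T)$ and $\rho_S(T)$ is axially symmetric --- and then substitute $t\mapsto 1-t$ to carry this contribution onto the parameter interval of $\gamma_\ell$.

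Pairing the two contributions arc by arc, the scalar coefficients multiplying $\Q_{\gamma_\ell(t)}^{-1}(T)$ and $T\Q_{\gamma_\ell(t)}^{-1}(T)$ become a complex number and its conjugate, hence sum to $2\Re(\cdot)$, which is real. Real scalars commute past $\Q_s(T)^{-1}$ and $T\Q_s(T)^{-1}$ because these operators are real-linear, so the sum collapses precisely onto the right-hand side of \eqref{PosPlanInt}. Combining this with \Cref{BABABA} and dividing by $2\pi$ yields the asserted equality; its independence of $I\in\ss$ and of the slice Cauchy domain $U$ is already furnished by \Cref{PreCalc}.

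The step that needs care is the bookkeeping of noncommutativity: for a merely right-linear $T$ the operator $\Q_s(T)^{-1}$ does \emph{not} commute with left multiplication by arbitrary (or even complex) scalars, so the scalars must be kept on the correct side of the $S$-resolvent throughout, and one may only move them across the operators once the conjugate pairs have already combined into real numbers. Intrinsicity of $f$ is used exactly to guarantee $f(\C_I)\subseteq\C_I$; without it the scalars appearing on a fixed slice would not commute and the pairing argument would break down. This is the same mechanism underlying \Cref{BABABA}, so no genuinely new obstacle arises beyond transcribing that argument for $S_L^{-1}$ in place of $S_R^{-1}$.
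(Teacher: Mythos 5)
Your proposal is correct and follows essentially the same route as the paper: both arguments reduce everything to the expression \eqref{PosPlanInt}, using the contour decomposition of \Cref{UpperPart}, the intrinsicity identities $f(\C_I)\subset\C_I$ and $f(\overline{s})=\overline{f(s)}$ together with $\Q_{\overline{s}}(T)=\Q_s(T)$, the pairing of conjugate arcs into real coefficients $2\Re(\cdot)$, and the fact that real scalars commute with the operators. The only (cosmetic) difference is direction --- you compute the left-resolvent integral down to the right-hand side of \eqref{PosPlanInt} and then invoke \Cref{BABABA} for the other integral, whereas the paper starts from \eqref{PosPlanInt}, moves the real factors across the operators, and reassembles the left-resolvent integral.
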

\begin{proof}
Fix $U$ and $I$, let $\gamma_{1},\ldots\gamma_{N}$ be the part of $\partial(U\cap\C_I)$ that lies in $\C_I^+$  and write the integral involving the right $S$-resolvent as an integral over these paths as in \eqref{PosPlanInt}.
Any operator commutes with real numbers and $f(\gamma_{\ell}(t))$, $\gamma_{\ell}'(t)$ and $\overline{\gamma_{\ell}(t)}$ commute mutually since they all belong to the same complex plane $\C_I$. Hence
\begin{align*}
& \int_{\partial(U\cap\C_I)}f(s)\,ds_I\,S_R^{-1}(s,T) \\
=& \sum_{\ell=1}^N\int_{0}^{1}\Q_{\gamma(t)}^{-1}(T)2\Re\left(\overline{\gamma_{\ell}(t)}\gamma_{\ell}'(t)(-I)f(\gamma_{\ell}(t))\right)\,dt\\
&-\sum_{\ell=1}^N\int_{0}^{1}T\Q_{\gamma_{\ell}(t)}^{-1}(T)2\Re\Big(\gamma_{\ell}'(t)(-I)f(\gamma_{\ell}(t))\Big)\,dt
\displaybreak[1]\\
=&\sum_{\ell=1}^N \int_{0}^1 \left(T\Q_{\gamma_{\ell}(t)}^{-1}(T) - \Q_{\gamma_{\ell}(t)}^{-1}(T)\overline{\gamma_{\ell}(t)}\right)\gamma_{\ell}'(t)(-I) f(\gamma_{\ell}(t)) \,dt\\
&+\sum_{\ell=1}^N \int_{0}^1 \left(T\Q_{\overline{\gamma_{\ell}(t)}}^{-1}(T) - \Q_{\overline{\gamma_{\ell}(t)}}^{-1}(T)\gamma_{\ell}(t)\right)\overline{\gamma_{\ell}'(t)}I \overline{f(\gamma_{\ell}(t))}\,dt
\displaybreak[1]\\
=& \sum_{\ell=1}^N \int_{\gamma_{\ell}}S_L^{-1}(s,T)\,ds_I\,f(s)+\sum_{\ell=1}^N\int_{-\overline{\gamma_{\ell}}}S_L^{-1}(s,T)\,ds_I\,f(s)\\
=& \int_{\partial(U\cap\C_I)} S_L^{-1}(s,T)\,ds_I\,f(s).
\end{align*}

\end{proof}

\begin{corollary}\label{IntWell}
Let $T\in\closOP(V)$ with $\rho_S(T)\neq \emptyset$. If $f\in\intrin(\sigma_S(T)\cup\{\infty\})$, then \eqref{LCalc} and \eqref{RCalc} give the same operator.
\end{corollary}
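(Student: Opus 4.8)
The plan is to observe that the two expressions \eqref{LCalc} and \eqref{RCalc} are both legitimately defined for $f$ and that they differ only through their integral terms, whose equality has just been proved. First I would recall that an intrinsic function $f\in\intrin(\sigma_S(T)\cup\{\infty\})$ is by definition left slice hyperholomorphic and, since in $f(x)=\alpha(x_0,x_1)+I_x\beta(x_0,x_1)$ the function $\beta$ is real-valued and hence commutes with $I_x$, it is also right slice hyperholomorphic; moreover it has the same limit at infinity regardless of which class it is viewed in. Thus $f\in\lhol(\sigma_S(T)\cup\{\infty\})\cap\rhol(\sigma_S(T)\cup\{\infty\})$, so both right-hand sides of \eqref{LCalc} and \eqref{RCalc} are meaningful and, by \Cref{PreCalc}, independent of the imaginary unit $I\in\ss$ and of the slice Cauchy domain $U$ used to write them.

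Next I would fix one admissible pair $(I,U)$ as in \Cref{PreCalc} and evaluate both operators using this common choice of data. The scalar contributions $f(\infty)\id$ in \eqref{LCalc} and \eqref{RCalc} are literally identical. For the integral contributions I would simply invoke \Cref{IntRep}, which states precisely that
\[\frac{1}{2\pi}\int_{\partial(U\cap\C_I)} S_L^{-1}(s,T)\,ds_I\,f(s) = \frac{1}{2\pi}\int_{\partial(U\cap\C_I)} f(s)\,ds_I\, S_R^{-1}(s,T).\]
Adding $f(\infty)\id$ to both sides then identifies the operator defined by \eqref{LCalc} with the one defined by \eqref{RCalc}, which is the claim.

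All of the analytic work is done inside \Cref{IntRep} and \Cref{BABABA} — decomposing $\partial(U\cap\C_I)$ into its part lying in $\C_I^+$ via \Cref{PathSplit}, exploiting $f(\overline{x})=\overline{f(x)}$ for intrinsic $f$ together with $\Q_{\overline{s}}(T)=\Q_s(T)$, and using that the scalar factors $f(\gamma_\ell(t))$, $\gamma_\ell'(t)$, $\overline{\gamma_\ell(t)}$ lie in $\C_I$ and so may be commuted freely past $\Q_{\gamma_\ell(t)}^{-1}(T)$ and $T$. Consequently the present statement is a direct corollary and presents no genuine obstacle; the only mild point of care is that the $\{\infty\}$-component must be matched, but this is immediate since an intrinsic function assigns the same value $f(\infty)$ in either calculus.
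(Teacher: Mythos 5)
Your argument is correct and is exactly the intended one: the paper states \Cref{IntWell} as an immediate consequence of \Cref{IntRep}, since the $f(\infty)\id$ terms in \eqref{LCalc} and \eqref{RCalc} coincide and the integral terms agree by that theorem. Your additional remarks on why an intrinsic function lies in both $\lhol$ and $\rhol$ and on the well-definedness via \Cref{PreCalc} are accurate but not needed beyond what the paper already establishes.
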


Recall that a function $f$ on $U$ is called locally constant if every point $x\in U$ has a neighborhood $B_x\subset U$ such that $f$ is constant on $U$. A locally constant function $f$ is constant on every connected subset of the its domain. Thus, since every sphere $[x]$ is connected, the function $f$ is constant on every sphere if its domain $U$ is axially symmetric, i.e. it is of the form $f(x) = c(x_0,x_1)$, where $c$ is locally constant on an appropriate subset of $\R^2$. Therefore $f$ can be considered a left and a right slice function and it is even left and right slice hyperholomorphic because the partial derivatives of a locally constant function vanish.

\begin{lemma}\label{BHolSplit}
A function $f$ is left and right slice hyperholomorphic if and only if 
\(f = c + \tilde{f}\), where $c$ is a locally constant slice function and $\tilde{f}$ is intrinsic.
\end{lemma}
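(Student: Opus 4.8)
The plan is to prove both implications by working with the (unique) pair of component functions $\alpha,\beta$ attached to a slice function. The \emph{``if''} direction is immediate: a locally constant slice function $c(x)=c(x_0,x_1)$ has vanishing partial derivatives, so it is at once a left and a right slice function whose components trivially satisfy the Cauchy--Riemann equations \eqref{CR} and the compatibility condition \eqref{SymCond}; hence $c\in\lhol(U)\cap\rhol(U)$. Intrinsic functions lie in $\lhol(U)\cap\rhol(U)$ by the remark following the definition of $\intrin(U)$, and since both classes are closed under addition, $f=c+\tilde f\in\lhol(U)\cap\rhol(U)$.

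For the \emph{``only if''} direction, let $f\in\lhol(U)\cap\rhol(U)$ on the axially symmetric open set $U$, and let $D\subset\R^2$ be the set determined by $x_0+Ix_1\in U\iff(x_0,x_1)\in D$. Write $f$ both as a left slice function $f(x)=\alpha(x_0,x_1)+I_x\beta(x_0,x_1)$ and as a right slice function $f(x)=\alpha'(x_0,x_1)+\beta'(x_0,x_1)I_x$. Evaluating each representation at $x=x_0+Ix_1$ and at $\overline{x}=x_0-Ix_1$ for $x_1>0$ (where $I_{\overline{x}}=-I$) and using \eqref{SymCond} gives
\[
\alpha(x_0,x_1)=\alpha'(x_0,x_1)=\tfrac12\big(f(x)+f(\overline{x})\big),\qquad I\,\beta(x_0,x_1)=\tfrac12\big(f(x)-f(\overline{x})\big)=\beta'(x_0,x_1)\,I.
\]
Since $U$ is axially symmetric, the point $x_0+Ix_1$ lies in $U$ for \emph{every} $I\in\ss$, so $I\,\beta(x_0,x_1)=\beta'(x_0,x_1)\,I$ holds for all imaginary units $I$. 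From this I would conclude that $\beta(x_0,x_1)=\beta'(x_0,x_1)$ and that this common value is real-valued: anything commuting with every $I\in\ss$ commutes with all of $\F$ (the scalars together with $\ss$ generate $\F$) and hence lies in the centre of $\F$, which for $\F=\H$ (and for $\F=\cliff n$ with $n$ even) equals $\R$.

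With $\beta$ real-valued, the Cauchy--Riemann equations \eqref{CR} force $\partial_{x_0}\alpha=\partial_{x_1}\beta$ and $\partial_{x_1}\alpha=-\partial_{x_0}\beta$ to be real-valued as well, so, decomposing $\alpha$ in a basis of $\F$, its non-real components have vanishing gradient on $D$ and are locally constant; thus $\alpha=\alpha_0+c_D$ with $\alpha_0$ real-valued and $c_D$ locally constant on $D$. One then checks, using the symmetry $\alpha(x_0,-x_1)=\alpha(x_0,x_1)$, that also $c_D(x_0,-x_1)=c_D(x_0,x_1)$, so that $c(x):=c_D(x_0,x_1)$ is a genuine locally constant slice function on $U$; consequently $\tilde f:=f-c\in\lhol(U)$ has real-valued components $\alpha_0$ and $\beta$ and is therefore intrinsic, giving $f=c+\tilde f$. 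I expect the conceptual crux to be the algebraic step identifying $\beta$ as real-valued: over $\H$ this is just ``the centre of $\H$ is $\R$'', but in the Clifford setting with $n$ odd the centre also contains the pseudoscalar, and that case requires a separate and more delicate argument; the remainder is routine bookkeeping (chiefly: verifying that the locally constant part of $\alpha$ respects the $x_1$-symmetry so that $c$ descends from $D$ to an honest slice function on all of $U$).
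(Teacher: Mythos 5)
Your argument follows the paper's proof almost step for step: equate the two slice decompositions to obtain $\alpha=\alpha'$ and $I\beta(x_0,x_1)=\beta'(x_0,x_1)I$ for every $I\in\ss$, deduce that $\beta$ is real, transfer realness to the partial derivatives of $\alpha$ via \eqref{CR}, and peel off the non-real part of $\alpha$ as the locally constant summand. The one genuine gap is the passage from ``$I\beta I^{-1}=\beta'$ for every $I\in\ss$'' to ``$\beta=\beta'$ and $\beta$ is central.'' The centre argument you invoke requires that $\beta$ \emph{commute} with every $I$, i.e.\ $I\beta=\beta I$, which is not what you have derived; what you have is that conjugation by every $I$ sends $\beta$ to one and the same element $\beta'$. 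As written the deduction is circular. The paper closes this by first choosing the particular unit $I=I_{\beta(x_0,x_1)}$, which commutes with $\beta(x_0,x_1)$ and hence forces $\beta=\beta'$; only then does $I\beta=\beta' I=\beta I$ hold for all $I$ and the centrality argument apply. (Alternatively, comparing conjugation by two orthogonal units $I,J$ shows that $\beta$ commutes with $IJ$, and varying the pair gives centrality directly.) This is a two-line repair, but it is missing.

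Your caveat about $\cliff{n}$ with $n$ odd is well taken and is not a defect of your proof relative to the paper's: the paper's own proof asserts without comment that commuting with every $I\in\ss$ forces realness, which uses that the centre of $\F$ is $\R$; for $n$ odd the pseudoscalar $\omega=e_1\cdots e_n$ is also central, and indeed $f(x)=x\omega$ is both left and right slice hyperholomorphic with $\beta(x_0,x_1)=x_1\omega$ non-real, so it admits no decomposition into a locally constant slice function plus an intrinsic one. The statement therefore needs a restriction (or a separate argument) in that setting, and the paper's proof is only complete for $\H$ and for even $n$. The remainder of your bookkeeping (openness of $D$, vanishing gradient of the non-real components of $\alpha$, the $x_1$-symmetry of the locally constant part) is sound and equivalent to the paper's, which instead observes that $c=f-\tilde f$ is constant on spheres and has vanishing holomorphic derivative on each slice.
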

\begin{proof}
Obviously any function that admits a decomposition of this type is both left and right slice hyperholomorphic. 

Assume on the other hand that $f$ is left and right slice hyperholomorphic such that $f(x) = \alpha(x_0,x_1) + I_x \beta(x_0,x_1)$ and $f(x) = \hat{\alpha}(x_0,x_1) +\hat{\beta}(x_0,x_1)I_x$. The compatibility condition \eqref{SymCond} implies
\[ \alpha(x_0,x_1) = \frac{1}{2}\left(f(x) + f(\overline{x})\right) = \hat{\alpha}(x_0,x_1),\]
from which we deduce $I\beta(x_0,x_1) = f(x_I) - \alpha(x_0,x_1) = \hat{\beta}(x_0,x_1)I$ for any $I\in\ss$, where $x_I = x_0 + Ix_1$. Hence we have
\[I\beta(x_0,x_1) I^{-1} = \hat{\beta}(x_0,x_1).\] If we choose $I = I_{\beta(x_0,x_1)}$, then $I$ and $\beta(x_0,x_1)$ commute and we obtain $\beta(x_0,x_1) = \hat{\beta}(x_0,x_1)$. Moreover, $\beta(x_0,x_1)$ commutes with every $I\in\ss$ because $I\beta(x_0,x_1) = \hat{\beta}(x_0,x_1)I = \beta(x_0,x_1) I$, which implies that $\beta(x_0,x_1)$ is real. 

Since $\beta$ takes real values, its partial derivatives $\frac{\partial}{\partial x_0}\beta(x_0,x_1)$ and $\frac{\partial}{\partial x_1}\beta(x_0,x_1)$  are real-valued too. Thus, since $\alpha$ and $\beta$ satisfy the Cauchy-Riemann-equations \eqref{CR}, the partial derivatives of $\alpha$ also take real-values. 

Now define $\tilde{\alpha}(x_0,x_1) = \Re(\alpha(x_0,x_1))$ and $\tilde{\beta}(x_0,x_1) = \beta(x_0,x_1)$ and set $\tilde{f} (x) = \tilde{\alpha}(x_0,x_1) + I_x\beta(x_0,x_1)$ and $c(x) = f(x) - \tilde{f}(x) = \Im(\alpha(x_0,x_1))$. Obviously, $\tilde{\alpha}$ and $\tilde{\beta}$ satisfy the compatibility condition \eqref{SymCond}. They also satisfy the Cauchy-Riemann-equations \eqref{CR} because $\alpha$ and $\beta$ do and
\[ \frac{\partial}{\partial x_i}\tilde{\alpha}(x_0,x_1) = \frac{\partial}{\partial x_i} \Re(\alpha(x_0,x_1)) = \Re\left(\frac{\partial}{\partial x_i} \alpha(x_0,x_1) \right) = \frac{\partial}{\partial x_i}\alpha(x_0,x_1),\]
Therefore $\tilde{f}$ is a left slice hyperholomorphic function with real-valued components, thus intrinsic.

It remains to show that $c$ is locally constant. Since $\tilde{c}(x)=\Im(\alpha(x_0,x_1))$, it depends only on $x_0$ and $x_1$ but not on the imaginary unit $I_x$ and is therefore constant on every sphere $[x]$ with $x\in U$. Moreover, as the sum of two slice hyperholomorphic functions, it is left (and right) slice hyperholomorphic and thus its restriction $c_I$ to any complex plane $\C_I$ is an $\F$-valued holomorphic function. But
\[ c_I'(x) = \frac{\partial}{\partial x_0} c_I(x) =   \frac{\partial}{\partial x_0} f(x) -  \frac{\partial}{\partial x_0}\tilde{f}(x) = 0\quad x\in U\cap\C_I\]
and hence $c$ is locally constant on $U\cap\C_I$. If $x\in U$, we can therefore find a neighborhood $B_{I_x}$ of $x$  in $U\cap\C_{I_x}$ such that $c_{I_x}$ is constant on $B_{I_x}$. Since $c$ is constant on any sphere, it is even constant on the axially symmetric hull $B = [B_{I_x}]$ of $B_{I_x}$, which is a neighborhood of $x$ in $U$.

\end{proof}

\begin{corollary}\label{ASJ}
Let $T\in\closOP(V)$ such that $T\in\rho_S(T)\neq\emptyset$ and let $f$ be both left and right slice hyperholomorphic on $\sigma_S(T)$ and at infinity. If $\fdom(f)$ is connected, then \eqref{LCalc} and \eqref{RCalc} give the same operator.
\end{corollary}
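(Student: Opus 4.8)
The plan is to reduce the statement to two situations that have already been resolved: intrinsic functions, handled by \Cref{IntWell}, and constant functions, handled by \Cref{IDWorks}. Since $f$ is both left and right slice hyperholomorphic, \Cref{BHolSplit} yields a decomposition $f = c + \tilde{f}$ on $\fdom(f)$, where $c$ is a locally constant slice function and $\tilde{f}$ is intrinsic. Here the hypothesis that $\fdom(f)$ is connected enters decisively: a locally constant function on a connected set is constant, so $c \equiv a$ for some $a \in \F$, whence $\tilde{f} = f - a$. As $f$ is slice hyperholomorphic at infinity, so is $\tilde{f}$, with $\tilde{f}(\infty) = f(\infty) - a$; thus $\tilde{f} \in \intrin(\sigma_S(T)\cup\{\infty\})$, and the constant function $a$ belongs to $\lhol(\sigma_S(T)\cup\{\infty\}) \cap \rhol(\sigma_S(T)\cup\{\infty\})$, so that all operators appearing below are well defined.

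First I would treat the intrinsic summand: by \Cref{IntWell}, the operator $\tilde{f}(T)$ obtained from \eqref{LCalc} coincides with the one obtained from \eqref{RCalc}. Next I would treat the constant summand $s \mapsto a$: the computation in \Cref{IDWorks} shows that this function is sent to $a\id$ regardless of whether it is regarded as left or as right slice hyperholomorphic, the point being that the left (resp. right) $S$-resolvent is right (resp. left) slice hyperholomorphic in $s$ on a superset of $\F_0 \setminus U$ and vanishes at infinity, so that by \Cref{CauchyThm} the boundary integral cancels the contribution $f(\infty)\id = a\id$. Finally, the left and right linearity of the calculus recorded in \Cref{AlgProp} gives $f(T) = a\id + \tilde{f}(T)$ in both interpretations, and therefore the two operators agree.

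I do not expect a serious obstacle; once \Cref{BHolSplit}, \Cref{IntWell} and \Cref{IDWorks} are available, the argument is purely formal. The only points requiring a little care are the bookkeeping at infinity — verifying that the summands $c$ and $\tilde{f}$ inherit slice hyperholomorphicity at infinity, so that each is admissible for the respective functional calculus — and the observation that connectedness of $\fdom(f)$ is exactly what forces the locally constant part to collapse to a single constant. When this fails, i.e.\ for a genuinely locally constant function on a disconnected domain, the two calculi need not agree, which is the phenomenon made explicit in \Cref{CEx}.
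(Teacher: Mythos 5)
Your argument is correct and follows essentially the same route as the paper: decompose $f = c + \tilde f$ via \Cref{BHolSplit}, use connectedness to force $c$ to be a genuine constant, then combine \Cref{IDWorks} for the constant part and \Cref{IntWell} for the intrinsic part by linearity. Your extra remarks on the bookkeeping at infinity and on why the conclusion fails for disconnected domains are consistent with \Cref{InconsRem} and \Cref{CEx}.
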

\begin{proof}
By applying \Cref{BHolSplit} we obtain a decomposition $f = c + \tilde{f}$ of $f$ into the sum of a locally constant function $c$ and an intrinsic function $\tilde{f}$. Since $\dom(f)$ is connected,  $c$ is even constant. Thus, \Cref{IntWell} and \Cref{IDWorks} imply
\begin{align*}
&f(\infty)\id +\frac{1}{2\pi} \int_{\partial(U\cap\C_I)} f(s)\,ds_I\, S_R^{-1}(s,T)\\
=&c\left(\id +\frac{1}{2\pi} \int_{\partial(U\cap\C_I)}ds_I\, S_R^{-1}(s,T)\right) + \tilde{f}(\infty)\id +\frac{1}{2\pi} \int_{\partial(U\cap\C_I)} \tilde{f}(s)\,ds_I\, S_R^{-1}(s,T) \\
=& c\id + \tilde{f}(T) = \id c + \tilde{f}(T) \\
=&\left(\id +\frac{1}{2\pi} \int_{\partial(U\cap\C_I)}S_L^{-1}(s,T)\,ds_I\right) + \tilde{f}(\infty)\id +\frac{1}{2\pi} \int_{\partial(U\cap\C_I)}  S_L^{-1}(s,T)\,ds_I\,\tilde{f}(s)\\
=& f(\infty)\id +\frac{1}{2\pi} \int_{\partial(U\cap\C_I)}  S_L^{-1}(s,T)\,ds_I\,f(s),
\end{align*}
where $U$ and $I$ are chosen as in \Cref{DefiCalc}.

\end{proof}
\begin{remark}\label{InconsRem}
We point out that \Cref{ASJ} does not hold true in general. The $S$-functional calculus has usually been considered for functions that are defined on connected sets, namely on axially symmetric slice domains. Hence, the calculi for left and right slice hyperholomorphic functions were consistent as we have seen in \Cref{ASJ}. 

However, this restriction occurred only due to the reasons explained in \Cref{History}. Since it excludes a wide class of functions, in particular those that generate spectral projections as they are studied in \Cref{ProjSec}, it is worthwhile to remove it. The price one has to pay in this case is that the two functional calculi become inconsistent. Indeed, in \Cref{ASJ} the function $c$ is constant since $\fdom(f)$ is connected and hence, by \Cref{IDWorks}, the functional calculi for left and right slice hyperholomorphic functions yield $c(T) = c\id$ and $c(T) = \id c$, respectively. Since the identity operator commutes with every constant $c$, these operators coincide.

If on the contrary $\fdom(f)$ is not connected, then $c$  is only locally constant, i.e. it will in general be of the form $\sum \chi_{\Delta_i}(s) c_i$, where the $\Delta_i$ are disjoint axially symmetric sets and $\chi_{\Delta_i}$ denotes the characteristic function of $\Delta_i$, which is obviously intrinsic. The functional calculi for left and right slice hyperholomorphic functions yield then $c(T) = \sum \chi_{\Delta_i}(T)c_i$ and $c(T) = \sum c_i\chi_{\Delta_i}(T)$, respectively.  These two operators coincide only if the operators $\chi_{\Delta_i}$ commute with the scalars $c_i$. As we will see in \Cref{ProjSec}, the operators $\chi_{\Delta_i}(T)$ are spectral projections onto invariant subspaces of the operator $T$. Since the operator $T$ is right linear, its invariant subspaces are right subspaces of $V$. But if a projection $\chi_{\Delta_i}(T)$ commutes with with any scalar, then $av = a\chi_{\Delta_i}(T)v = \chi_{\Delta}(T) av \in \chi_{\Delta_i}(T) V$ for any $v\in\chi_{\Delta_i}(T)V$ and any $a\in\F$ and thus $\chi_{\Delta_i}(T)V$ is also a left and therefore a two-sided subspace of $V$. In general, this is not true: the invariant subspaces obtained from spectral projections are only right sided. Hence, the projections $\chi_{\Delta_i}(T)$ do not necessarily commute with any scalar and it might be that  $\sum \chi_{\Delta_i}(T)c_i\neq \sum c_i\chi_{\Delta_i}(T)$, i.e. the two functional calculi give different operators for the same function.

An explicit example for this situation is given in \Cref{CEx}.
\end{remark}

\section{The product rule and polynomials in $T$}

In order to prove the product rule for our functional calculus, we recall Lemma~3.23 of \cite{Alpay:2015}. Observe that, for the reasons explained in \Cref{History}, the lemma was originally stated assuming that $U$ is a slice domain. However, the same proof works also in the case that $U$ is a bounded slice Cauchy domain.
\begin{lemma}\label{AlpayLem}
Let $B\in\boundOP(V)$, let $U$ be a bounded slice Cauchy domain and let $f\in\intrin(U)$. For $p\in U$ and any $I\in\ss$, we have
\[ Bf(p) = \frac{1}{2\pi}\int_{\partial(U\cap\C_I)} f(s)\,ds_I\,(\overline{s}B-Bp)(p^2-2s_0p+|s|^2)^{-1}.\]  
\end{lemma}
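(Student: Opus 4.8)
The plan is to reduce the asserted operator identity to the scalar-valued Cauchy integral formula \Cref{Cauchy}, exploiting that $f$ is intrinsic. The only genuine obstacle is that $B$ does not commute with left multiplication by scalars in $\F$, and this is dealt with by grouping the integration contour into conjugate pairs of arcs, exactly as in the proofs of \Cref{BABABA} and \Cref{IntRep}.

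First I would fix $I\in\ss$ and, following \Cref{PathSplit} and \Cref{UpperPart}, write $\partial(U\cap\C_I)=\bigcup_{\ell=1}^N\gamma_\ell\cup(-\overline{\gamma_\ell})$ with $\gamma_1,\dots,\gamma_N$ the part of $\partial(U\cap\C_I)$ lying in $\C_I^+$. On each arc $-\overline{\gamma_\ell}$ I reverse the orientation, parametrize by $s=\overline{\gamma_\ell(t)}$, and use that $f$ is intrinsic, so $f(\overline s)=\overline{f(s)}$ by \Cref{IntChar}, together with $\Re(\overline s)=\Re(s)$ and $|\overline s|=|s|$; these show that the scalar $N(s):=(p^2-2\Re(s)p+|s|^2)^{-1}$ is unchanged under $s\mapsto\overline s$. (It is well defined on $\partial(U\cap\C_I)$ because $p\notin[s]$ there, as $U$ is axially symmetric and $p\in U$.) After a change of variables, the contribution of the pair $\gamma_\ell\cup(-\overline{\gamma_\ell})$ to the right-hand side becomes
\[\int_0^1\Big(c_\ell(t)\big(\overline{\gamma_\ell(t)}B-Bp\big)+\overline{c_\ell(t)}\big(\gamma_\ell(t)B-Bp\big)\Big)N(\gamma_\ell(t))\,dt,\]
where $c_\ell(t):=f(\gamma_\ell(t))(-I)\gamma_\ell'(t)\in\C_I$. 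Since $c_\ell(t)$ and $\gamma_\ell(t)$ lie in the same complex plane, the coefficients $c_\ell(t)\overline{\gamma_\ell(t)}+\overline{c_\ell(t)}\gamma_\ell(t)=2\Re\big(c_\ell(t)\overline{\gamma_\ell(t)}\big)$ and $c_\ell(t)+\overline{c_\ell(t)}=2\Re\big(c_\ell(t)\big)$ that multiply $B$ are real. Real scalars are central and commute with $N(\gamma_\ell(t))$ and with $p$, so the integrand equals the operator $v\mapsto B\big(a_\ell(t)v\big)$ with the scalar $a_\ell(t):=\big(2\Re(c_\ell(t)\overline{\gamma_\ell(t)})-2\Re(c_\ell(t))p\big)N(\gamma_\ell(t))$. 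Since $B$ is bounded it commutes with the Bochner integral and the finite sum, so the right-hand side equals $v\mapsto B\big(g(p)v\big)$, where $g(p):=\frac{1}{2\pi}\sum_{\ell=1}^N\int_0^1 a_\ell(t)\,dt$.

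Finally I would identify $g(p)=f(p)$. Carrying out the identical grouping computation with $B$ replaced by $\id$ shows $g(p)=\frac{1}{2\pi}\int_{\partial(U\cap\C_I)}f(s)\,ds_I\,(\overline s-p)N(s)$, and since $(\overline s-p)(p^2-2\Re(s)p+|s|^2)^{-1}=S_R^{-1}(s,p)$, this is precisely the right-hand side of the Cauchy integral formula \Cref{Cauchy} for the right slice hyperholomorphic function $f$ evaluated at $p\in U$ (replacing $U$ by a slightly smaller slice Cauchy domain if necessary, which leaves the integral unchanged by \Cref{CauchyThm}); hence $g(p)=f(p)$. Substituting back gives the right-hand side $=v\mapsto B(f(p)v)=Bf(p)$, as claimed. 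The step demanding care is the grouping argument — specifically, checking that once $f(\overline s)=\overline{f(s)}$ has been used, the coefficients multiplying $B$ are real and hence central; the remainder is bookkeeping with orientations and with the commutation properties of $N(s)$.
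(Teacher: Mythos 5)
Your proof is correct. Note first that the paper does not actually prove this lemma: it imports it as Lemma~3.23 of the cited reference and only remarks that the original argument (stated there for slice domains) goes through verbatim for bounded slice Cauchy domains, so there is no in-paper proof to measure you against. What you have written is a complete, self-contained proof, and it uses precisely the symmetrization technique that this paper deploys in \Cref{BABABA} and \Cref{IntRep}: split $\partial(U\cap\C_I)$ into conjugate pairs of arcs via \Cref{PathSplit}/\Cref{UpperPart}, use $f(\overline s)=\overline{f(s)}$ and the invariance of $(p^2-2\Re(s)p+|s|^2)^{-1}$ under $s\mapsto\overline s$ to make the scalars multiplying $B$ real, and then use that real scalars are central (so they pass through $B$) to pull $B$ outside the Bochner integral. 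The two points that genuinely need checking are both handled: the coefficients $c_\ell(t)\overline{\gamma_\ell(t)}+\overline{c_\ell(t)}\gamma_\ell(t)$ and $c_\ell(t)+\overline{c_\ell(t)}$ are real because all factors lie in the commutative plane $\C_I$, and the identification of the resulting scalar with $f(p)$ reduces, via $(\overline s-p)(p^2-2\Re(s)p+|s|^2)^{-1}=S_R^{-1}(s,p)$, to \Cref{Cauchy} applied to the right slice hyperholomorphic function $f$. Your caveat about shrinking $U$ so that $f$ is slice hyperholomorphic on the closure is appropriate (the lemma as stated is slightly loose on this point) and is consistent with how the lemma is actually invoked in the proof of \Cref{ProdThm}.
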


\begin{theorem}\label{ProdThm}
Let $T\in\closOP(V)$ with $\rho_S(T)\neq 0$. If $f\in\intrin(\sigma_S(T)\cup\{\infty\})$ and $g\in\lhol(\sigma_S(T)\cup\{\infty\})$, then
\begin{equation}\label{Prod}
(fg)(T) = f(T)g(T).
\end{equation}
Similarly, if  $f\in\rhol(\sigma_S(T)\cup\{\infty\})$ and $g\in\intrin(\sigma_S(T)\cup\{\infty\})$, then the product rule \eqref{Prod} also holds true.
\end{theorem}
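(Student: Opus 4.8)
The plan is to reproduce, in the unbounded setting, the proof of the product rule for bounded operators, which rests on the $S$-resolvent equation \eqref{resEQ}. Two facts about the intrinsic function $f$ will be used throughout: since $f$ is intrinsic, $f(\infty)\in\R$ and, by \Cref{IntWell}, $f(T)$ also equals the right $S$-resolvent integral $f(\infty)\id+\frac{1}{2\pi}\int_{\partial(U\cap\C_I)}f(s)\,ds_I\,S_R^{-1}(s,T)$; moreover $fg\in\lhol(\sigma_S(T)\cup\{\infty\})$, so $(fg)(T)$ is defined. First I would fix an imaginary unit $I\in\ss$ and two unbounded slice Cauchy domains $U_1,U_2$ with $\sigma_S(T)\subset U_1$, $\overline{U_1}\subset U_2$ and $\overline{U_2}\subset\fdom(f)\cap\fdom(g)$, and write $f(T)$ as the right integral over $\partial(U_2\cap\C_I)$ (integration variable $s$) and $g(T)$ as the left integral over $\partial(U_1\cap\C_I)$ (integration variable $p$). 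Expanding the product $f(T)g(T)$ yields four pieces: the constant piece $f(\infty)g(\infty)\id=(fg)(\infty)\id$; the piece $f(\infty)\cdot\frac{1}{2\pi}\int_{\partial(U_1\cap\C_I)}S_L^{-1}(p,T)\,dp_I\,g(p)$; the piece $\bigl(\frac{1}{2\pi}\int_{\partial(U_2\cap\C_I)}f(s)\,ds_I\,S_R^{-1}(s,T)\bigr)g(\infty)$; and the double integral $D:=\frac{1}{(2\pi)^2}\int_{\partial(U_2\cap\C_I)}\int_{\partial(U_1\cap\C_I)}f(s)\,ds_I\,S_R^{-1}(s,T)S_L^{-1}(p,T)\,dp_I\,g(p)$.

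Next I would insert the $S$-resolvent equation \eqref{resEQ} into $D$ --- legitimate because $s\in\partial U_2$ lies outside $\overline{U_1}$, so $s\notin[p]$ for every $p\in\partial U_1$ --- and split the integrand into the part $D_R$ carrying the factor $S_R^{-1}(s,T)$ and the part $D_L$ carrying $S_L^{-1}(p,T)$. In $D_R$ the operator $S_R^{-1}(s,T)$ and the scalar $\overline s$ do not depend on $p$, so I would evaluate the inner $p$-integral first: using $S_R^{-1}(s,p)=-S_L^{-1}(p,s)$ (\Cref{SProps}) and the fact that $s$ lies outside $\overline{U_1}$, the Cauchy integral theorem (\Cref{CauchyThm}) and Cauchy formula (\Cref{Cauchy}) in their form for \emph{unbounded} slice Cauchy domains --- where the end at infinity contributes the value at infinity --- give $\int_{\partial(U_1\cap\C_I)}(p^2-2s_0p+|s|^2)^{-1}\,dp_I\,g(p)=0$ and $\int_{\partial(U_1\cap\C_I)}p(p^2-2s_0p+|s|^2)^{-1}\,dp_I\,g(p)=-2\pi g(\infty)$. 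Substituting these, $D_R$ collapses to $-\frac{1}{2\pi}\int_{\partial(U_2\cap\C_I)}f(s)\,ds_I\,S_R^{-1}(s,T)g(\infty)$, which is exactly minus the third piece above; these two cancel.

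In $D_L$ the operator $S_L^{-1}(p,T)$ does not depend on $s$, so after an application of Fubini (harmless: the boundary curves are compact and the integrand continuous) I would evaluate the inner $s$-integral first. That inner integral has precisely the form of the right-hand side of \Cref{AlpayLem} with $B=S_L^{-1}(p,T)$; applied over the unbounded domain $U_2$, whose end at infinity produces the correction $-f(\infty)B$, it equals $S_L^{-1}(p,T)\bigl(f(p)-f(\infty)\bigr)$. Substituting this and using that $f(\infty)\in\R$ commutes with everything while $f(p)\in\C_I$ commutes past $dp_I$, the remaining $p$-integral combines with the second piece above to give $\frac{1}{2\pi}\int_{\partial(U_1\cap\C_I)}S_L^{-1}(p,T)\,dp_I\,(fg)(p)$. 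Adding the constant piece $(fg)(\infty)\id$ and recalling that $fg\in\lhol(\sigma_S(T)\cup\{\infty\})$ with $\overline{U_1}\subset\fdom(fg)$, this sum is precisely $(fg)(T)$ by \Cref{DefiCalc}, which establishes \eqref{Prod}. The case $f\in\rhol(\sigma_S(T)\cup\{\infty\})$, $g\in\intrin(\sigma_S(T)\cup\{\infty\})$ follows by the mirror-image computation.

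The step I expect to be the main obstacle is the bookkeeping forced by the unbounded slice Cauchy domains: one must first establish (or carefully cite) the Cauchy integral theorem and formula, and the analogue of \Cref{AlpayLem}, over unbounded slice Cauchy domains, identifying in each case the extra contribution coming from the end at infinity, and then verify that all the $f(\infty)$-, $g(\infty)$- and constant terms generated along the way cancel against one another exactly as claimed. (A minor simplification: by \Cref{AlgProp} one may first replace $f$ by $f-f(\infty)$ and thereby reduce to the case $f(\infty)=0$, which removes several of these terms without changing the substance of the argument.)
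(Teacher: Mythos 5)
Your proposal is correct and follows essentially the same route as the paper's proof: the same pair of nested unbounded slice Cauchy domains, the representation of $f(T)$ via the right $S$-resolvent (\Cref{IntRep}), insertion of the $S$-resolvent equation \eqref{resEQ}, evaluation of the resulting four integrals by truncating with large balls $B_r(0)$ and letting $r\to\infty$ (which is exactly how the paper justifies your ``contribution from the end at infinity'' for Cauchy's theorem and for \Cref{AlpayLem} with $B=S_L^{-1}(p,T)$), and the same cancellations of the $f(\infty)$- and $g(\infty)$-terms. The values you predict for the inner integrals ($0$, $-2\pi g(\infty)$, and $S_L^{-1}(p,T)(f(p)-f(\infty))$) are precisely those obtained in the paper.
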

\begin{proof}
By \Cref{PreCalc}, there exist unbounded slice Cauchy domains $U_p$ and $U_s$ such that $\sigma_S(T)\subset U_p$ and $\overline{U_p}\subset U_s$ and $\overline{U_s} \subset \fdom(f)\cap\fdom(g)$. The subscripts $s$ and $p$ indicate the respective variable of integration in the following computation. Moreover, we use the notation $[\partial O]_I := \partial(O\cap\C_I)$ for an axially symmetric set $O$ in order to obtain compacter formulas.

Recall that by \Cref{IntRep} the operator $f(T)$ can represented using the left and the right $S$-resolvent operator and hence
\begin{align*} f(T) g(T) = &\left(f(\infty)\id + \frac{1}{2\pi} \int_{[\partial U_s]_I} f(s)\,ds_I\,S_R^{-1}(s,T)\right)\cdot\\ 
&\qquad\qquad\quad\cdot\left(g(\infty)\id + \frac{1}{2\pi}\int_{[\partial U_p]_I}S_L^{-1}(p,T)\,dp_I\,g(p)\right).
 \end{align*}
 For the product of the integrals, the $S$-resolvent equation \eqref{resEQ} gives us that
\begin{align*}
&\int_{[\partial U_s]_I}f(s)\,ds_I\,S_R^{-1}(s,T)\int_{[\partial U_p]_I}S_L^{-1}(p,T)\,dp_I\,g(p)\\
=&\int_{[\partial U_s]_I}\int_{[\partial U_p]_I}f(s)\,ds_I\,S_R^{-1}(s,T)S_L^{-1}(p,T)\,dp_I\,g(p)\\
=&\int_{[\partial U_s]_I}\int_{[\partial U_p]_I}f(s)\,ds_I\,S_R^{-1}(s,T) p(p^2-2s_0p+|s|^2)^{-1}\,dp_I \,g(p)\\
&-\int_{[\partial U_s]_I}\int_{[\partial U_p]_I}f(s)\,ds_I\, S_L^{-1}(p,T)p(p^2-2s_0p+|s|^2)^{-1}\,dp_I \,g(p)\\
&-\int_{[\partial U_s]_I}\int_{[\partial U_p]_I}f(s)\,ds_I\,\overline{s}S_R^{-1}(s,T)(p^2-2s_0p+|s|^2)^{-1}\,dp_I \,g(p)\\
&+\int_{[\partial U_s]_I}\int_{[\partial U_p]_I}f(s)\,ds_I\,\overline{s}S_L^{-1}(p,T) (p^2-2s_0p+|s|^2)^{-1}\,dp_I\, g(p).
\end{align*}
For the sake of readability, let us denote these last four integrals by $I_1,\ldots I_4$. 

If $r>0$ is large enough, then $\F_0\setminus U_s$ is entirely contained in $B_r(0)$. In particular, $W := B_r(0)\cap U_p$ is then a bounded slice Cauchy domain with $\partial(W\cap\C_I) = \partial(U_p\cap\C_I)\cup \partial(B_r(0)\cap\C_I)$. From \Cref{AlpayLem}, we deduce
\begin{align*}
 I_1 =& \int_{[\partial U_s]_I}f(s)\,ds_I\,S_R^{-1}(s,T)\int_{[\partial U_p]_I} p(p^2-2s_0p+|s|^2)^{-1}\,dp_I \,g(p)\\
 =& \int_{[\partial U_s]_I}f(s)\,ds_I\,S_R^{-1}(s,T)\int_{[\partial W]_I} p(p^2-2s_0p+|s|^2)^{-1}\,dp_I\, g(p)\\
&- \int_{[\partial U_s]_I}f(s)\,ds_I\,S_R^{-1}(s,T)\int_{[\partial B_r(0)]_I} p(p^2-2s_0p+|s|^2)^{-1}\,dp_I\, g(p)\\
=&- \int_{[\partial U_s]_I}f(s)\,ds_I\,S_R^{-1}(s,T)\int_{[\partial B_r(0)]_I} p(p^2-2s_0p+|s|^2)^{-1}\,dp_I\, g(p),
\end{align*}
where the last equality follows from Cauchy's integral theorem since the function $p\mapsto p(p^2-2s_0p+|s|^2)^{-1}$ is left slice hyperholomorphic and the function $p\mapsto g(p)$ is right slice hyperholomorphic on $\overline{W}$ by our choice of $U_s$ and $U_p$. 
If we let $r$ tend to $+\infty$ and apply Lebesgue's theorem in order to exchange limit and integration, the inner integral tends to $2\pi g(\infty)$ and hence
\[I_1 = - 2\pi\left(\int_{[\partial U_s]_I}f(s)\,ds_I\,S_R^{-1}(s,T)\right) g(\infty).\]
We also have
\begin{align*}
-I_2+I_4 =& \int_{[\partial U_s]_I}\int_{[\partial U_p]_I} f(s)\,ds_I\,\left(\overline{s}S_L^{-1}(p,T)-pS_L^{-1}(p,T)\right)\cdot\\
&\phantom{\int_{[\partial U_s]_I}\int_{[\partial U_p]_If(s)\,ds_I\,\qquad\qquad\ \,} }\cdot(p^2-2s_0p+|s|^2)^{-1}\,dp_I\,g(p).
\end{align*}
and applying Fubini's theorem allows us to change the order of integration. If we now set $W = B_{r}(0)\cap U_s$ with $r$ sufficiently large we obtain as before a bounded slice Cauchy domain with $\partial (W\cap\C_I) = \partial(U_s\cap\C_I)\cup\partial (B_r(0)\cap\C_I)$. From \Cref{AlpayLem}, applied with $B= S_L^{-1}(p,T)$, we deduce  
\begin{align*}
-I_2+I_4 
=& \int_{[\partial U_p]_I}\int_{[\partial W]_I} f(s)\,ds_I\,\left(\overline{s}S_L^{-1}(p,T)-pS_L^{-1}(p,T)\right)\cdot\\
&\phantom{\int_{[\partial U_p]_I}\int_{[\partial U_s]_If(s)\,ds_I\,\qquad\qquad\ \,} }\cdot(p^2-2s_0p+|s|^2)^{-1}\,dp_I\,g(p)\\
&- \int_{[\partial U_p]_I}\int_{[\partial B_r(0)]_I} f(s)\,ds_I\,\left(\overline{s}S_L^{-1}(p,T)-pS_L^{-1}(p,T)\right)\cdot\\
&\phantom{\int_{[\partial U_p]_I}\int_{[\partial U_s]_If(s)\,ds_I\,\qquad\qquad\ \,} }\cdot(p^2-2s_0p+|s|^2)^{-1}\,dp_I\,g(p)\\
=& 2\pi\int_{[\partial U_p]_I}S_L^{-1}(p,T)f(p)\,dp_I\,g(p)\\
&- \int_{[\partial U_p]_I}\int_{[\partial B_r(0)]_I} f(s)\,ds_I\,\overline{s}S_L^{-1}(p,T)(p^2-2s_0p+|s|^2)^{-1}\,dp_I\,g(p)\\
&- \int_{[\partial U_p]_I}\int_{[\partial B_r(0)]_I} f(s)\,ds_I\,pS_L^{-1}(p,T)(p^2-2s_0p+|s|^2)^{-1}\,dp_I\,g(p)\\
\end{align*}
Observe that the third integral tends to zero as $r\to\infty$. For the second, we obtain by applying Lebesgue's theorem
\begin{gather*}
\int_{[\partial U_p]_I}\int_{[\partial B_r(0)]_I} f(s)\,ds_I\,\overline{s}S_L^{-1}(p,T)(p^2-2s_0p+|s|^2)^{-1}\,dp_I\,g(p)\\
=\int_{[\partial U_p]_I}\left(\int_{0}^{2\pi} f(re^{i\phi})r^2S_L^{-1}(p,T)(p^2-2r\cos(\phi)p+r^2)^{-1}\,d\phi\right)\,dp_I\,g(p)\\
\overset{r\to+\infty}{\longrightarrow}2\pi f(\infty)\int_{[\partial U_p]_I}S_L^{-1}(p,T)\,dp_I\,g(p).
\end{gather*} 
Since $f$ is intrinsic, $f(p)$ commutes with $dp_I$, and hence
\begin{align*}
-I_2+I_4 =& 2\pi \int_{[\partial U_p]_I}S_L^{-1}(p,T)\,dp_I\,f(p)g(p) \\
&- 2\pi f(\infty)\int_{[\partial U_p]_I}S_L^{-1}(p,T)\,dp_I\,g(p).
\end{align*}
Finally, we consider the integral $I_3$. If we set again $W = B_r(0)\cap U_p$ with $r$ sufficiently large, then
\begin{align*}
-I_3 = &-\int_{[\partial U_s]_I}\int_{[\partial W]_I}f(s)\,ds_I\,\overline{s}S_R^{-1}(s,T)(p^2-2s_0p+|s|^2)^{-1}\,dp_I \,g(p)\\
 &+\int_{[\partial U_s]_I}\int_{[\partial B_r(0)]_I}f(s)\,ds_I\,\overline{s}S_R^{-1}(s,T)(p^2-2s_0p+|s|^2)^{-1}\,dp_I \,g(p).
\end{align*} 
By our choice of $U_s$ and $U_p$, the functions and $p\mapsto (p^2-2s_0p + |s|^2)^{-1}$ and $p\mapsto g(p)$ are left resp. right slice hyperholomorphic on $\overline{V}$. Hence, Cauchy's integral theorem implies that the first integral equals zero. Letting $r$ tend to infinity, we can apply Lebesgue's theorem in order to exchange limit and integration and we see that
\begin{align*}
-I_3 &= \int_{[\partial U_s]_I}\int_{[\partial B_r(0)]_I}f(s)\,ds_I\,\overline{s}S_R^{-1}(s,T)(p^2-2s_0p+|s|^2)^{-1}\,dp_I \,g(p)\to 0.
\end{align*} 
Altogether, we obtain
\begin{align*}
&\frac{1}{(2\pi)^2}\int_{[\partial U_s]_I}f(s)\,ds_I\,S_R^{-1}(s,T)\int_{[\partial U_p]_I}S_L^{-1}(p,T)\,dp_I\,g(p)\\
 =&- \frac{1}{2\pi}\left(\int_{[\partial U_s]_I}f(s)\,ds_I\,S_R^{-1}(s,T)\right) g(\infty)+ \frac{1}{2\pi}\int_{[\partial U_p]_I}S_L^{-1}(p,T)\,dp_I\,f(p)g(p)\\
  &-  f(\infty)\frac{1}{2\pi}\int_{[\partial U_p]_I}S_L^{-1}(p,T)\,dp_I\,g(p).
\end{align*}
We thus have
\begin{align*}
f(T)g(T) =& f(\infty)g(\infty)\id +  f(\infty)\frac{1}{2\pi}\int_{[\partial U_p]_I}S_L^{-1}(p,T)\,dp_I\,g(p)\\
&+\left(\frac{1}{2\pi}\int_{[\partial U_s]_I}f(s)\,ds_I\,S_R^{-1}(s,T) \right)g(\infty) \\
&+ \frac{1}{(2\pi)^2}\int_{[\partial U_s]_I}f(s)\,ds_I\,S_R^{-1}(s,T)\int_{[\partial U_p]_I}S_L^{-1}(p,T)\,dp_I\,g(p)\\
=& f(\infty)g(\infty)\id + \frac{1}{2\pi}\int_{[\partial U_p]_I}S_L^{-1}(p,T)\,dp_I\,f(p)g(p) = (fg)(T).
\end{align*}
\end{proof}

If the operator $T$ is bounded, then slice hyperholomorphic polynomials of $T$ belong to the class of functions that are admissible for the $S$-functional calculus. In the unbounded cases, this is not true, but the $S$-functional calculus is in some sense still compatible, at least with intrinsic polynomials. For such polynomial $P(s) = \sum_{k=0}^n a_ks^k$ with $a_k\in\R$, the operator $P(T)$ is as usual defined as the operator\[P(T)v := \sum_{k=0}^na_kT^kv\qquad v\in\dom(T^n).\]
\begin{lemma}\label{Commute}
Let $T\in\closOP(V)$ with $\rho_S(T)\neq\emptyset$, let $f\in\intrin(\sigma_S(T)\cup\{\infty\})$ and let $P$ be an intrinsic polynomial of degree $n\in\N_0$. If $v\in\dom(T^n)$, then $f(T)v\in\dom(T^n)$ and $f(T)P(T)v = P(T)f(T)v$.
\end{lemma}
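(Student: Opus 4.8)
The plan is to reduce the statement to the single case $P(s)=s$. Since $P$ is an intrinsic polynomial, $P(s)=\sum_{k=0}^{n}a_ks^{k}$ with $a_k\in\R$, and since by \Cref{PreCalc} the operator $f(T)$ is right $\F$-linear (it is the sum of $f(\infty)\id$ and an operator in $\boundOP(V)$), it commutes with real scalars; hence, once we know that $f(T)v\in\dom(T^{n})$ and $T^{k}f(T)v=f(T)T^{k}v$ for $v\in\dom(T^{n})$ and $k\le n$, the identity $f(T)P(T)v=P(T)f(T)v$ follows from the two expansions $f(T)P(T)v=\sum_k a_kf(T)T^{k}v$ and $P(T)f(T)v=\sum_k a_kT^{k}f(T)v$. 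It therefore suffices to prove, for every $k\le n$, the statement $C(k)$: \emph{if $v\in\dom(T^{k})$, then $f(T)v\in\dom(T^{k})$ and $T^{k}f(T)v=f(T)T^{k}v$.} I would prove this by induction on $k$, the case $C(0)$ being trivial; the inductive step follows from $C(1)$ together with $C(k-1)$ (applied once to $v$ and once to $Tv$) by routine bookkeeping with domains. So the whole proof hinges on $C(1)$: for $v\in\dom(T)$, $f(T)v\in\dom(T)$ and $Tf(T)v=f(T)Tv$.

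For $C(1)$, fix an unbounded slice Cauchy domain $U$ and an imaginary unit $I$ as in \Cref{PreCalc} and write $\gamma:=\partial(U\cap\C_I)$. Two elementary facts drive the argument. First, $f(\infty)\in\R$: since $f$ is intrinsic one has $f(\overline x)=\overline{f(x)}$, and letting $x\to\infty$ gives $f(\infty)=\overline{f(\infty)}$; consequently $f(\infty)v\in\dom(T)$ and $T(f(\infty)v)=f(\infty)Tv$. Second, by the left $S$-resolvent equation \eqref{reseqL} one has $TS_L^{-1}(s,T)=S_L^{-1}(s,T)s-\id$ as bounded operators on $V$, so $s\mapsto TS_L^{-1}(s,T)$ is norm continuous on the compact curve $\gamma$. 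Hence $s\mapsto TS_L^{-1}(s,T)\,ds_I\,f(s)\,v$ is a continuous $V$-valued function on $\gamma$, and because $T$ is closed I may interchange $T$ with the contour integral. This shows $f(T)v\in\dom(T)$ and, using \eqref{reseqL} once more together with the fact that $s$, $f(s)$ and $-I$ lie in $\C_I$ and therefore commute,
\begin{align*}
Tf(T)v&=f(\infty)Tv+\frac1{2\pi}\int_{\gamma}TS_L^{-1}(s,T)\,ds_I\,f(s)\,v\\
&=f(\infty)Tv+\frac1{2\pi}\int_{\gamma}S_L^{-1}(s,T)\,ds_I\,(sf(s))\,v-\frac1{2\pi}\Bigl(\int_{\gamma}ds_I\,f(s)\Bigr)v.
\end{align*}

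On the other hand, since $f$ is intrinsic, \Cref{IntRep} lets me also use the right $S$-resolvent representation $f(T)=f(\infty)\id+\frac1{2\pi}\int_{\gamma}f(s)\,ds_I\,S_R^{-1}(s,T)$. For $v\in\dom(T)$ the right $S$-resolvent equation \eqref{reseqR} gives $S_R^{-1}(s,T)Tv=sS_R^{-1}(s,T)v-v$, and as above I obtain
\[
f(T)Tv=f(\infty)Tv+\frac1{2\pi}\int_{\gamma}(sf(s))\,ds_I\,S_R^{-1}(s,T)\,v-\frac1{2\pi}\Bigl(\int_{\gamma}f(s)\,ds_I\Bigr)v.
\]
Since $f(s)$ and $-I$ commute in $\C_I$, one has $\int_{\gamma}ds_I\,f(s)=\int_{\gamma}f(s)\,ds_I$, so the two ``$-v$''-terms agree. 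Finally, the function $s\mapsto sf(s)$ is intrinsic and slice hyperholomorphic on a neighbourhood of $\overline U$, and the computations in the proofs of \Cref{BABABA} and \Cref{IntRep} — which manipulate only the contour integral over $\gamma$ and never use behaviour at infinity — apply to it verbatim, yielding $\int_{\gamma}S_L^{-1}(s,T)\,ds_I\,(sf(s))=\int_{\gamma}(sf(s))\,ds_I\,S_R^{-1}(s,T)$. Comparing the two displays gives $Tf(T)v=f(T)Tv$, which completes $C(1)$ and hence the proof.

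I expect the main obstacle to be the non-commutativity. Unlike the classical case, pulling $T$ past the integral forces the \emph{left} $S$-resolvent into play through \eqref{reseqL}, whereas computing $f(T)Tv$ forces the \emph{right} $S$-resolvent through \eqref{reseqR}; these can only be reconciled because $f$, and therefore $sf(s)$, is intrinsic — this is precisely where \Cref{IntRep} is indispensable and why the hypothesis cannot be relaxed. A secondary difficulty is the careful bookkeeping of the two-sided module structure (which scalars act on $v$ from which side, and the fact that $\dom(T)$ is only a right submodule, so that $f(\infty)\in\R$ is genuinely used), together with the standard but necessary justification for exchanging the closed operator $T$ with the $V$-valued Cauchy integral.
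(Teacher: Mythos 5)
Your proof is correct, and its overall architecture --- reduce everything to the single commutation $Tf(T)v=f(T)Tv$ for $v\in\dom(T)$, then induct on the degree --- coincides with the paper's. Where you genuinely differ is in the mechanism for that core step. The paper stays entirely on the right-resolvent side: it invokes \Cref{BABABA} to rewrite $\int_{\partial(U\cap\C_I)}f(s)\,ds_I\,S_R^{-1}(s,T)$ in the form \eqref{PosPlanInt}, in which only \emph{real} scalars multiply $\Q_{\gamma_\ell(t)}^{-1}(T)$ and $T\Q_{\gamma_\ell(t)}^{-1}(T)$, and then commutes $T$ past $\Q_{\gamma_\ell(t)}^{-1}(T)$ on $\dom(T)$ and pulls it out by Hille's theorem. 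You instead compute $Tf(T)v$ from the left representation via \eqref{reseqL} and $f(T)Tv$ from the right representation via \eqref{reseqR}, and reconcile the two by applying the left/right equality of \Cref{IntRep} to the auxiliary intrinsic function $s\mapsto sf(s)$. Both routes rest on the same two ingredients ($f$ intrinsic, $T$ closed), but yours makes more visible exactly where intrinsicness enters (it is what allows $sf(s)$ to be fed into \Cref{IntRep}), at the cost of needing both resolvent equations. Two points deserve explicit care in a written-up version. First, $sf(s)$ does \emph{not} belong to $\intrin(\sigma_S(T)\cup\{\infty\})$ when $f(\infty)\neq 0$, so \Cref{BABABA} and \Cref{IntRep} do not apply to it as stated; your justification --- that their proofs are purely algebraic manipulations of the integrand on the compact contour $\partial(U\cap\C_I)$ and never use the hypothesis at infinity --- is correct, but the resulting identity should be recorded as a separate observation rather than cited as a black box. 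Second, interchanging the closed operator $T$ with the contour integral requires not only that $s\mapsto TS_L^{-1}(s,T)$ be bounded and continuous, but that the integrand takes values in $\dom(T)$; this holds because $S_L^{-1}(s,T)$ maps $V$ into $\dom(T^2)\subset\dom(T)$, cf.\ \Cref{RkResExtension}, after which Hille's theorem applies exactly as in the paper.
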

\begin{proof}
We consider first the special case $p(s) = s$. Let $U$ be a slice Cauchy domain with $\sigma_S(T)\subset U$ and let $\{\gamma_{1},\ldots,\gamma_{n}\}$ be the part of $\partial(U\cap\C_I)$ in $\C_I^+$ for some $I\in\ss$, cf. \Cref{UpperPart}. We apply \Cref{BABABA} and write
\begin{align*}
&\int_{\partial(U\cap\C_I)}f(s)\,ds_I\,S_R^{-1}(s,T) \\
=& \sum_{\ell=1}^N\int_{0}^{1}2\Re\left(f(\gamma_{\ell}(t))(-I)\gamma_{\ell}'(t)\overline{\gamma_{\ell}(t)}
\right)\Q_{\gamma_{\ell}(t)}^{-1}(T)\,dt\\
&-\sum_{\ell=1}^N\int_{0}^{1}2\Re\Big(f(\gamma_{\ell}(t))(-I)\gamma_{\ell}'(t)\Big)T\Q_{\gamma_{\ell}(t)}^{-1}(T)\,dt.
\end{align*}
Observe that $\Q_{\gamma_i(t)}^{-1}(T)Tv = T\Q_{\gamma_i(t)}^{-1}(T)v$ for  $v\in\dom(T)$ and that $T$ also commutes with real numbers. By applying Hille's theorem for the Bochner integral, we can move $T$ in front of the integral and find
\begin{align*}
&\frac{1}{2\pi}\int_{\partial(U\cap\C_I)} f(s)\,ds_I\,S_R^{-1}(s,T) Tv\\
=&\sum_{i=1}^nT\frac{1}{2\pi}\int_{0}^12\Re\left(f(\gamma_i(t))(-I)\gamma_i'(t)\overline{\gamma_i(t)}\right)\Q_{\gamma_i(t)}^{-1}(T)v\\
&-\sum_{i=1}^nT\frac{1}{2\pi}\int_{0}^12\Re\Big(f(\gamma_i(t))(-I)\gamma_i'(t)\Big)T\Q_{\gamma_i(t)}^{-1}(T)v\\
&= T\frac{1}{2\pi}\int_{\partial(U\cap\C_I)} f(s)\,ds_I\,S_R^{-1}(s,T) v,
\end{align*}
where the last equation follows again from \eqref{PosPlanInt}. Finally, observe that $f(\infty) = \lim_{s\to\infty}f(s)$ is real since $f(s)\in\R$ for any $s\in\R$. Hence,
\begin{align*}
 f(T)Tv& = f(\infty)Tv + \frac{1}{2\pi}\int_{\partial(U\cap\C_I)}f(s)\,ds_I\,S_R^{-1}(s,T)Tv \\
 &= Tf(\infty)v + T\frac{1}{2\pi}\int_{\partial(U\cap\C_I)}f(s)\,ds_I\,S_R^{-1}(s,T)v = Tf(T)v.
 \end{align*}
In  particular, this implies $f(T)v\in\dom(T)$.

We show the general statement by induction with respect to the degree $n$ of the polynomial. If $n = 0$ then the statement follows immediately from \Cref{IDWorks}. Now assume that it is true for $n-1$ and consider $P(s) = a_ks^n + P_{n-1}(s)$, where $a_n\in\R$ and $P_{n-1}(s)$ is an intrinsic polynomial of degree lower or equal to $n-1$. For $v\in\dom(T^n)$ the above argumentation implies then $f(T)T^{n-1}v\in\dom(T)$ and
\begin{align*}
f(T)P(T)v &= f(T)a_nT^nv + f(T)P_{n-1}(T)v \\&= a_nTf(T)T^{n-1}v + f(T)P_{n-1}(T)v.
\end{align*}
From the induction hypothesis, we further deduce $f(T)T^{n-1}v = T^{n-1}f(T)v$ and $f(T)P_{n-1}(T)v = P_{n-1}(T)f(T)v$ and hence
\[f(T)P(T)v = a_nT^nf(T)v + P_{n-1}(T)f(T)v = P(T)f(T)v.\]
In particular, we see $f(T)v\in\dom(T^{n})$.

\end{proof}
As in the complex case, we say that $f$ has a zero of order $n$ at $\infty$ if the first $n-1$-coefficients in the Taylor series expansion of $s\mapsto f(s^{-1})$ at $0$ vanish and the $n$-th coefficient does not. Equivalently, $f$ has a zero of order $n$ if $\lim_{s\to\infty} f(s)s^n$ is bounded and nonzero. We say that $f$ has a zero of infinite order, if it vanishes on a neighborhood of $\infty$.
\begin{lemma}\label{ProdPoly}
Let $T\in\closOP(V)$ with $\rho_S(T)\neq\emptyset$ and assume that $f\in\intrin(\sigma_{S}(T)\cup\{\infty\})$ has a zero of order $n\in\N_0\cup\{+\infty\}$ at infinity.
\begin{enumerate}[(i)]
\item\label{LKA1} For any intrinsic polynomial $P$ of degree lower than or equal to $n$, we have $P(T)f(T) = (Pf)(T)$.
\item\label{LKA2}  If $v\in\dom(T^m)$ for some $m\in\N_0\cup\{\infty\}$, then $f(T)v\in\dom(T^{m+n})$.
\end{enumerate}
\end{lemma}
\begin{proof}
Assume first that $f$ has a zero of order greater than or equal to one at infinity and consider $P(s)=s$. Then $Pf\in\intrin(\sigma_{S}(T)\cup\{\infty\})$ and for $v\in V$
\[(Pf)(T)v =  \lim_{s\to\infty}sf(s) v + \frac{1}{2\pi}\int_{\partial(U\cap\C_I)}S_L^{-1}(s,T)\,ds_I sf(s)v,\]
with an appropriate slice Cauchy domain $U$ and any imaginary unit $I\in\ss$.
Since $s$ and $ds_I$ commute, we deduce from the left $S$-resolvent equation \eqref{reseqL} that 
\begin{align*}
&\frac{1}{2\pi}\int_{\partial(U\cap\C_I)}S_L^{-1}(s,T)\,ds_I\,sf(s)v \\
=&\frac{1}{2\pi}\int_{\partial(U\cap\C_I)}TS_L^{-1}(s,T)\,ds_I\, f(s)v + \frac{1}{2\pi}\int_{\partial(U\cap\C_I)}\,ds_I\,f(s)v
\end{align*}
Any sufficiently large Ball $B_{r}(0)$ contains $\partial U $. The function $f(s)v$ is then right slice hyperholomorphic on $\overline{B_r(0)\cap U}$ and Cauchy's integral theorem implies
\begin{gather*}
\frac{1}{2\pi}\int_{\partial(U\cap\C_I)}\,ds_I\,f(s)v = \lim_{r\to+\infty}- \frac{1}{2\pi}\int_{\partial(B_{r}(0)\cap\C_I)}\,ds_I\,f(s)v\\
=\lim_{r\to+\infty} -\frac{1}{2\pi}\int_{0}^{2\pi}re^{I\varphi} f(re^{I\varphi})v\,d\varphi = - \lim_{r\to+\infty}rf(r)v.
\end{gather*}
Thus, after applying Hille's theorem for the Bochner integral in order to write the operator $T$ in front of the integral, we obtain
\begin{align*} (Pf)(T)v 
&= T\frac{1}{2\pi}\int_{\partial(\cap\C_I)}S_L^{-1}(s,T)\,ds_I\,f(s)v = P(T)f(T)v.
\end{align*}
In particular, we see that $f(T)v\in\dom(T)$. 

We show \cref{LKA1} for monomials by induction and assume that it is true for $p(s) = s^{n-1}$ if $f$ has a zero of order greater than or equal to $n-1$ at infinity. If the order of $f$ at infinity is even greater than or equal to $n$, then $g(s) =  s^{n-1}f(s)$ has a zero of order at least 1 at infinity and, from the above argumentation and the induction hypothesis, we conclude for $P(s)=s^n$
\begin{align*}
(Pf)(T)v = Tg(T)v = TT^{n-1}f(T)v = T^nf(T)v,
\end{align*}
which implies also $f(T)v\in\dom(T^n)$. For arbitrary intrinsic polynomials the statement finally follows from the linearity of the $S$-functional calculus.
 
In order to show \cref{LKA2} assume first $v\in\dom(T^m)$ for $m\in\N$. If $f$ has a zero of order $n\in\N$ at infinity, then \cref{LKA1} with $P(s)= s^n$ and \Cref{Commute} imply
\[ (Pf)(T)T^mv =T^nf(T)T^mv = T^nT^mf(T)v = T^{m+n}f(T)v \]
and hence $f(T)v\in\dom(T^{m+n})$. Finally, if $m=+\infty$ then $v\in\dom(T^k)$ and hence $f(T)v\in\dom(T^{k+n})$ for any $k\in\N$. Thus, $v\in\dom(T^{\infty})$.

\end{proof}

\begin{corollary} \label{PolyClos}
Let $T\in\closOP(X)$ with $\rho_S(T)\neq\emptyset$. For any intrinsic polynomial $P$, the operator $P(T)$ is closed.
\end{corollary}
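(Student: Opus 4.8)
Since the case $\deg P = 0$ is immediate (then $P(T)$ is a bounded multiple of $\id$ by \Cref{IDWorks}), I would assume $n := \deg P \ge 1$ and fix some $p\in\rho_S(T)$, which exists by hypothesis. The plan is to factor $P(T)$ into a bounded operator followed by a closed operator. To this end I would consider the intrinsic function
\[ g(x) := \Q_p(x)^{-n} = \big(x^2 - 2\Re(p)x + |p|^2\big)^{-n}, \]
which is slice hyperholomorphic on $\F_0\setminus[p]\supseteq\sigma_S(T)$ and has a zero of order $2n$ at infinity; thus $g\in\intrin(\sigma_S(T)\cup\{\infty\})$ and $g(T)\in\boundOP(V)$. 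Likewise $Pg$ is intrinsic with a zero of order $n$ at infinity, so $(Pg)(T)\in\boundOP(V)$, and \Cref{ProdPoly} gives $(Pg)(T) = P(T)g(T)$. Applying \Cref{ProdPoly} once more to the intrinsic polynomial $\Q_p^{\,n}$ of degree $2n$ (together with \Cref{IDWorks}) yields $\Q_p(T)^n g(T) = (\Q_p^{\,n} g)(T) = \id$, where $\Q_p(T)^n$ denotes the polynomial $x\mapsto\Q_p(x)^n$ evaluated at $T$, an operator with domain $\dom(T^{2n})$; dually, \Cref{Commute} gives $g(T)\,\Q_p(T)^n = \id$ on $\dom(T^{2n})$. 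Consequently $g(T)$ is a bounded injection with $\ran g(T) = \dom(T^{2n})$ whose inverse is precisely $\Q_p(T)^n$.

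I would then establish the operator identity
\[ P(T) = \Q_p(T)^n \circ (Pg)(T), \]
where the right-hand side carries its natural domain $\{\,w\in V : (Pg)(T)w\in\dom(T^{2n})\,\}$. Equality on $\dom(T^n)$, together with the inclusion of $\dom(T^n)$ in the domain of the right-hand side, is straightforward from \Cref{Commute}: for $v\in\dom(T^n)$ one has $(Pg)(T)v = P(T)g(T)v = g(T)P(T)v\in\ran g(T)$, hence $\Q_p(T)^n(Pg)(T)v = \Q_p(T)^n g(T) P(T)v = P(T)v$. For the reverse inclusion I would take a $w$ with $u := g(T)w\in\dom(T^{2n})$ and $P(T)u = (Pg)(T)w\in\dom(T^{2n})$; writing $a_n T^n u = P(T)u - \sum_{k<n}a_k T^k u$ with $a_n\neq 0$, the regularity of $P(T)u$ and of $u$ feeds back into that of $T^n u$, and a short iteration of this observation upgrades $u$ to $\dom(T^{3n})$, whence $w = \Q_p(T)^n u\in\dom(T^n)$ because $\Q_p(T)^n$ has degree $2n$.

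Finally, I would invoke two elementary facts. First, $\Q_p(T)^n$ is closed, being the inverse of the bounded injective operator $g(T)$: the graph of the inverse is the image of the (closed) graph of $g(T)$ under the coordinate flip. Second, if $B$ is bounded and $C$ is closed, then $C\circ B$ is closed. Applying these with $B = (Pg)(T)$ and $C = \Q_p(T)^n$ shows that $P(T)$ is closed. The step I expect to demand the most care is the reverse domain inclusion, i.e.\ proving that every $w$ with $(Pg)(T)w\in\dom(T^{2n})$ automatically lies in $\dom(T^n)$; the remainder is routine bookkeeping with \Cref{ProdThm}, \Cref{Commute}, \Cref{ProdPoly} and \Cref{IDWorks}.
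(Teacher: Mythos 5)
Your proposal is correct and follows essentially the same route as the paper: factor $P(T)$ as the closed operator $\Q_p(T)^n$ (closed because its inverse $\Q_p(T)^{-n}$ is bounded) composed with the bounded operator $(P\,\Q_p^{-n})(T)$, using \Cref{ProdPoly} and \Cref{Commute}. The only difference is that you additionally verify that the natural domain of the composition $\{w : (P\Q_p^{-n})(T)w\in\dom(T^{2n})\}$ coincides with $\dom(T^n)$ — a point the paper's proof leaves implicit — and your bootstrapping argument for that reverse inclusion is sound.
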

\begin{proof}
We choose $s\in\rho_S(T)$ and $n\in\N$ such that $m\leq 2n$, where $m$ is the degree of $P$. Then $f(p) = P(p)\Q_{s}(p)^{-n}$ belongs to $\intrin(\sigma_{S}(T)\cup\{\infty\})$ and has a zero of order $2n-m$ at infinity. Applying \Cref{ProdPoly}, we see that 
\[P(T)v  = P(T)\Q_{s}(T)^n\Q_{s}(T)^{-n}v = \Q_{s}(T)^nP(T)\Q_{s}(T)^{-n}v = \Q_{s}(T)^nf(T)v\]
for $v\in\dom(T^m)$. 
Since its inverse is bounded, the operator $\Q_{s}(T)^n$ is closed and in turn $P(T)$ is closed as it is the composition of a closed and a bounded operator. 

\end{proof}

\begin{corollary}\label{ClosInv}
Let $T\in\closOP(V)$ with $\rho_S(T)\neq\emptyset$. If $f\in\intrin(\sigma_S(T)\cup\{\infty\})$  has no zeros on $\sigma_S(T)$ and a zero of even order $n$ at infinity, then $\ran(f(T)) = \dom(T^n)$  and $f(T)$ is invertible in the sense of closed operators. If $\rho_S(T)\cap\R\neq\emptyset$, this holds true for any order $n\in\N$.
\end{corollary}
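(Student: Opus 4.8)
The plan is to reduce the assertion for $f$ to the corresponding assertion for a power of a pseudo-resolvent, for which everything can be made explicit. Fix $q\in\rho_S(T)$, choosing $q$ real in case $\rho_S(T)\cap\R\neq\emptyset$. Put $\phi(s):=\Q_q(s)^{-n/2}$ in general (here the evenness of $n$ enters), and $\phi(s):=(s-q)^{-n}$ when $q$ is real. In either case $\phi$ is intrinsic, and it lies in $\intrin(\sigma_S(T)\cup\{\infty\})$: its only singularities are the sphere $[q]\subset\rho_S(T)$ (disjoint from $\sigma_S(T)$) and the point $\infty$, where it has a zero of order exactly $n$; moreover $\phi$ has no zeros on $\sigma_S(T)$. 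I would first record that $\phi(T)=\Q_q(T)^{-n/2}$, resp. $\phi(T)=(T-q\id)^{-n}$. This follows from the product rule \Cref{ProdThm} (applied $n/2$, resp. $n$, times, all factors being intrinsic) once one knows $(\Q_q^{-1})(T)=\Q_q(T)^{-1}$, resp. $((s-q)^{-1})(T)=(T-q\id)^{-1}$. The latter identity I would obtain from \Cref{ProdPoly} (with the intrinsic polynomial $P=\Q_q$, resp. $P(s)=s-q$, whose degree matches the order of the zero of $\Q_q^{-1}$, resp. $(s-q)^{-1}$, at $\infty$) giving a right inverse, and from \Cref{Commute} giving that the same operator is a left inverse on $\dom(T^2)$, resp. $\dom(T)$; since $\Q_q(T)$, resp. $T-q\id$, is bijective onto $V$ from that domain because $q\in\rho_S(T)$, the two-sided inverse is unique.

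Next I would reduce the claim about $f$ to the claim about $\phi$. Because $f$ has no zeros on $\sigma_S(T)$ and has a zero of order exactly $n$ at $\infty$, the quotient $h:=\phi/f$ extends to an element of $\intrin(\sigma_S(T)\cup\{\infty\})$ with $h(\infty)\neq0$; in particular $h$ has no zeros on $\sigma_S(T)\cup\{\infty\}$, so $1/h\in\intrin(\sigma_S(T)\cup\{\infty\})$ as well. By \Cref{ProdThm} and \Cref{IDWorks} one gets $h(T)(1/h)(T)=(1/h)(T)h(T)=\id$, so $h(T)\in\boundOP(V)$ is bijective with $h(T)^{-1}=(1/h)(T)$; and $fh=\phi$ yields $f(T)h(T)=\phi(T)$, hence $f(T)=\phi(T)(1/h)(T)$. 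Since $(1/h)(T)$ is a bounded bijection of $V$, this gives $\ran f(T)=\ran\phi(T)$, shows $f(T)$ is injective iff $\phi(T)$ is, and gives $f(T)^{-1}=h(T)\phi(T)^{-1}$. Thus it suffices to prove $\ran\phi(T)=\dom(T^n)$ and that $\phi(T)$ is injective with closed inverse.

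For this, note that $\phi(T)=\Q_q(T)^{-n/2}$ (resp. $(T-q\id)^{-n}$) is a composition of copies of the bounded injective operator $\Q_q(T)^{-1}$ (resp. $(T-q\id)^{-1}$), hence is itself bounded and injective; as the inverse of a bounded, everywhere-defined, injective operator is always closed, $\phi(T)^{-1}=\Q_q(T)^{n/2}$ (resp. $(T-q\id)^n$) is closed. For the range, one inclusion $\ran\phi(T)\subseteq\dom(T^n)$ is immediate from \Cref{ProdPoly} applied to $\phi$, which has a zero of order $n$ at $\infty$. For the reverse inclusion I would use that the pseudo-resolvent $\Q_q(T)^{-1}$ maps $\dom(T^m)$ \emph{bijectively} onto $\dom(T^{m+2})$ for every $m\in\N_0$: the inclusion ``$\subseteq$'' is again \Cref{ProdPoly}, while ``$\supseteq$'' follows because $\Q_q(T)=T^2-2\Re(q)T+|q|^2\id$ maps $\dom(T^{m+2})$ into $\dom(T^m)$ and inverts $\Q_q(T)^{-1}$ there. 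Iterating $n/2$ times yields $\ran\phi(T)=\Q_q(T)^{-n/2}(V)=\dom(T^n)$ (in the real case $(T-q\id)^{-1}$ maps $\dom(T^m)$ bijectively onto $\dom(T^{m+1})$, giving the same conclusion for any $n$). Together with the reduction of the previous paragraph, $\ran f(T)=\dom(T^n)$ and $f(T)$ is injective with closed inverse $f(T)^{-1}=h(T)\phi(T)^{-1}$, which is the assertion.

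The only genuinely delicate point is the domain bookkeeping in the last step, i.e.\ establishing $\Q_q(T)^{-1}(\dom(T^m))=\dom(T^{m+2})$; this is routine but has to be carried out by induction on $m$, using the commutation of $T$ with $\Q_q(T)^{-1}$ on $\dom(T)$ (cf.\ \Cref{RkResExtension}) and the defining relation $\dom(\Q_q(T))=\dom(T^2)$. Everything else is a direct application of the product rule \Cref{ProdThm} and of the results on polynomials in $T$ established in \Cref{Commute} and \Cref{ProdPoly}.
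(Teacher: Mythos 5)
Your argument is correct and follows essentially the same route as the paper: both factor $f(T)=\Q_q(T)^{-n/2}h(T)$ with $h=f\,\Q_q^{n/2}$ (your $h$ is just the reciprocal of this one), invert $h(T)$ in $\boundOP(V)$ via \Cref{ProdThm}, identify the range of the pseudo-resolvent power with $\dom(T^n)$, and handle the real case with $(s-q)^{-n}$ in place of $\Q_q(s)^{-n/2}$. You merely make explicit a few steps the paper leaves implicit --- that $(\Q_q^{-1})(T)=\Q_q(T)^{-1}$ and that $\Q_q(T)^{-1}$ maps $\dom(T^m)$ bijectively onto $\dom(T^{m+2})$ --- and you replace the appeal to \Cref{PolyClos} for closedness by the direct observation that the inverse of a bounded, everywhere-defined, injective operator is closed.
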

\begin{proof}
Let $P\in\rho_S(T)$ and set $k=n/2$. The function $h(s) = f(s)\Q_p(s)^k$ with $\Q_p(s) = s^2 - 2\Re(p)s+|p|^2$ belongs to $\intrin(\sigma_S(T)\cup\{\infty\})$ and does not have any zeros in $\sigma_S(T)$. Furthermore, $h(\infty) = \lim_{s\to\infty}h(s)$ is finite and nonzero. Hence, $s\mapsto h(s)^{-1}$  belongs to $\intrin(\sigma_S(T)\cup\{\infty\})$ and we deduce from \Cref{ProdThm}  that $h(T)$ is invertible in $\boundOP(V)$. \Cref{ProdThm} moreover implies $f(T) = \Q_p(T)^{-k}h(T)$. Now observe that $h(T)$ maps $V$ bijectively onto $V$ and that $\Q_p(T) ^{-k}$ maps $V$ onto $\dom(T^{2k}) = \dom(T^{n})$. Thus $\ran(f(T)) = \dom(T^n)$. 

Finally, $f(T)^{-1} := h^{-1}(T)\Q_p(T)^k$ is a closed operator because $h$ is bijective and continuous and $\Q_p(T)^k$ is closed by \Cref{PolyClos} and it satisfies $f(T)^{-1}f(T)v = v$ for $v\in V$ and $f(T)f(T)^{-1}v = v$ for $v\in\dom(T^n)$. Thus it is the inverse of $f(T)$.

In the case there exists a point  $a\in\rho_S(T)\cap\R$, a similar argumentation holds with $P(s) = (s-a)^n$ instead of $\Q_p(s)^k$. In particular, this allows us to include functions with a zero of odd order at infinity too.

\end{proof}

\section{The spectral mapping theorem and composite functions}
\begin{definition}
Let $T\in\closOP(V)$. We define the extended spectrum $\sigma_{SX}(T)$ as $\sigma_{S}(T)$ if $T$ is bounded and as $\sigma_S(T)\cup\{\infty\}$ otherwise. The extended resolvent set $\rho_{SX}(T)$ is the complement of $\sigma_{SX}(T)$ in $\F_0\cup\{\infty\}$.
\end{definition}

\begin{theorem}[Spectral Mapping Theorem] \label{SpecMap}
Let $T\in\closOP(V)$ with $\rho_S(T)\neq\emptyset$. If $f\in\intrin(\sigma_S(T)\cup\{\infty\})$, then
\(\sigma_S(f(T)) = f(\sigma_{SX}(T))\).
\end{theorem}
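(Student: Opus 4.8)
The plan is to prove separately the two inclusions making up $\sigma_S(f(T)) = f(\sigma_{SX}(T))$, following the classical argument for the Riesz--Dunford calculus but with the scalar factor $s-\lambda$ replaced by the quadratic polynomial $\Q_q(s) = s^2-2\Re(q)s+|q|^2$, whose zero set is the sphere $[q]$. I would first record two elementary facts: since $f$ is slice hyperholomorphic at $\infty$, the integral in \eqref{LCalc} runs over a compact set, so $f(T)\in\boundOP(V)$ and $\sigma_S(f(T))$ is compact; and since $f$ is intrinsic, $f([x])=[f(x)]$ for every $x$, so $f(\sigma_{SX}(T))$ is axially symmetric and, as the image of the compact set $\sigma_{SX}(T)\subset\F_0\cup\{\infty\}$ under the continuous $f$ (with $f(\infty)\in\F_0$), compact in $\F_0$. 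The computation underlying both inclusions is that for any $p\in\F_0$ the function $s\mapsto\Q_p(f(s)) = f(s)^2-2\Re(p)f(s)+|p|^2$ is intrinsic on $\fdom(f)$ (composition of $f$ with an intrinsic polynomial), that $\Q_p(f(s))=0$ precisely when $f(s)\in[p]$, and that by the product rule \Cref{ProdThm}, linearity (\Cref{AlgProp}) and \Cref{IDWorks} one has $\big(\Q_p(f)\big)(T) = f(T)^2-2\Re(p)f(T)+|p|^2\id = \Q_p(f(T))$.

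For $f(\sigma_{SX}(T))\subseteq\sigma_S(f(T))$ I would first take a finite point $q\in\sigma_S(T)$ and set $g:=\Q_{f(q)}(f(\cdot))$. A direct computation gives $g(q)=\Q_{f(q)}(f(q))=0$, so, $g$ being intrinsic, $g$ vanishes on all of $[q]$; hence one may factor $g=\Q_q\,h$ with $h\in\intrin(\sigma_S(T)\cup\{\infty\})$ having a zero of order at least two at $\infty$ (because $g(\infty)$ is finite while $\Q_q(\infty)=\infty$). Then the identity $P(T)f(T)=(Pf)(T)$ of \Cref{ProdPoly} gives $\Q_{f(q)}(f(T)) = g(T) = \Q_q(T)h(T)$, and since $h(T)$ maps $V$ into $\dom(T^2)$ (\Cref{ProdPoly}) and commutes there with $\Q_q(T)$ (\Cref{Commute}), a short argument shows that bounded invertibility of $\Q_{f(q)}(f(T))$ would force $\Q_q(T)$ to be boundedly invertible, contradicting $q\in\sigma_S(T)$; thus $f(q)\in\sigma_S(f(T))$. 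If $T$ is unbounded I must also treat $q=\infty$: here $c:=f(\infty)\in\R$ (as $f$ is intrinsic), so $\Q_c(f(T)) = (f(T)-c\id)^2$, while $f-c$ has a zero of order $\geq 1$ at $\infty$, whence by \Cref{ProdPoly} (with \Cref{AlgProp} and \Cref{IDWorks}) $\ran(f(T)-c\id) = \ran\big((f-c)(T)\big)\subseteq\dom(T)$; if $c\in\rho_S(f(T))$, then $(f(T)-c\id)^2$, hence also $f(T)-c\id$, would be surjective, forcing $\dom(T)=V$ and therefore, by the closed graph theorem, $T$ bounded — a contradiction.

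For the reverse inclusion $\sigma_S(f(T))\subseteq f(\sigma_{SX}(T))$, let $p\in\F_0\setminus f(\sigma_{SX}(T))$; by axial symmetry $[p]\cap f(\sigma_{SX}(T))=\emptyset$, so $\Q_p(f(s))\neq0$ for all $s\in\sigma_S(T)$, and $\Q_p(f(\infty))\neq0$ when $T$ is unbounded. (If $T$ is bounded, $\Q_p\circ f$ may vanish near $\infty$; one then first replaces $f$ by a function agreeing with it on a neighbourhood of $\sigma_S(T)$ and equal near $\infty$ to a constant not on $[p]$, which by \Cref{OldConsistent} does not change $f(T)$.) Thus $s\mapsto 1/\Q_p(f(s))$ lies in $\intrin(\sigma_S(T)\cup\{\infty\})$, and applying \Cref{ProdThm} to $\Q_p(f)\cdot(1/\Q_p(f)) = 1 = (1/\Q_p(f))\cdot\Q_p(f)$, together with \Cref{IDWorks}, gives $\Q_p(f(T))\,(1/\Q_p(f))(T) = \id = (1/\Q_p(f))(T)\,\Q_p(f(T))$, so $\Q_p(f(T))^{-1}\in\boundOP(V)$, i.e. $p\in\rho_S(f(T))$.

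The step I expect to be the main obstacle is the factorization $g=\Q_q h$ in the first inclusion; everything else reduces to the product rule and to bookkeeping about domains and orders of zeros at $\infty$. This division is standard in the theory of slice hyperholomorphic functions, and it can be made self-contained here because $g$ is intrinsic: one restricts $g$ to a complex plane $\C_I$, where by \Cref{SplitLem} it is an ordinary $\C_I$-valued holomorphic function whose zeros at $q$ and $\overline q$ match those of $\Q_q|_{\C_I}$, divides in the classical sense, and re-extends the quotient via \Cref{extLem}, obtaining a function that is $\C_I$-valued on $\C_I$, hence intrinsic by \Cref{IntChar}, and agrees with $g/\Q_q$ off $[q]$. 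One should also bear in mind throughout that $f(T)$ is bounded, so that $\sigma_S(f(T))$ is compact, and that the bounded and unbounded cases are reconciled using \Cref{OldConsistent}.
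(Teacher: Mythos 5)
Your proposal is correct and follows essentially the same route as the paper: both inclusions rest on composing $f$ with the quadratic $\Q_q$, factoring $\Q_{f(q)}\circ f=\Q_q\cdot h$ with $h$ intrinsic and vanishing to order two at infinity, and then invoking the product rule together with the domain bookkeeping of \Cref{ProdPoly} and \Cref{Commute}. The only deviations are cosmetic and harmless: you handle $\infty\in\sigma_{SX}(T)$ by observing directly that $\ran(f(T)-f(\infty)\id)\subset\dom(T)\subsetneq V$ (via the closed graph theorem) where the paper invokes \Cref{ClosInv}, and you prove the reverse inclusion contrapositively with the single function $1/(\Q_p\circ f)$ instead of the paper's case split into real and non-real spectral points.
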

\begin{proof}
Let us first show the relation $\sigma_S(f(T)) \supset f(\sigma_{SX}(T))$. For $p\in\sigma_S(T)$ consider the function
\[g(s) := (f(s)^2 - 2\Re(f(p))f(s) - |f(p)|^2)(s^2 - 2\Re(p)s + |p|^2)^{-1},\]
which is defined on $\fdom(f)\setminus[p]$. If we set $p_{I_s} = p_0 + I_s p_1$, then $p_{I_s}$ and $s$ commute. Since $f$ is intrinsic, it maps $\C_I$ into $\C_I$ and hence $f(p_{I_s})$ and $f(s)$ commute too. Thus
\[g(s) = \frac{(f(s)-f(p_{I_s}))(f(s) - \overline{f(p_{I_s})})}{(s-p_{I_s})(s-\overline{p_{I_s}})}\]
and we can extend $g$ to all of $\fdom(f)$ by setting
\begin{equation}
\arraycolsep=1.4pt\def\arraystretch{2}
g(s) = 
\left\{\begin{array}{cc}
 \sderiv f(s) \left(\underline{f(p)}\,\underline{p}^{-1}\right)& s\in[p]\text{ if } p\notin\R\\
(\sderiv f(s))^2 & s = p, \text{ if }p \in \R 
\end{array}\right.,
\end{equation}
where $\underline{p} = \frac12(p - \overline{p})$ denotes the vectorial part of $p$. Now observe that 
\[(s^2 - 2\Re(p)s+|p|^2)g(s) = f(s)^2 + 2\Re(f(p))f(s) + |f(p)|^2\]
and that $g$ has zero of order greater or equal to 2 at infinity. Hence, we can apply the $S$-functional calculus to deduce from \Cref{ProdPoly}, \Cref{ProdThm} and \Cref{IDWorks} that
\[ (T^2 - 2\Re(p)T + |p|^2\id)g(T)v = (f(T)^2 + 2\Re(f(p))f(T) + |f(p)|\id)v \]
for any $v\in V$ and
\[ g(T)(T^2 - 2\Re(p)T + |p|^2\id g(T))v = (f(T)^2 + 2\Re(f(p))f(T) + |f(p)|\id)v\]
for $v\in\dom(T^2)$.
If $f(p) \in \rho_S(T)$, then 
\[\Q_{f(p)}(f(T)) = f(T)^2 - 2\Re(f(p))f(T)+|f(p)|\id\]
is invertible and $\Q_{f(p)}(f(T))^{-1}g(T) = g(T)\Q_{f(p)}(f(T))^{-1}$ is the inverse of $\Q_{p}(T) = T^2 - 2\Re(p)T+|p|^2\id$. Hence, $f(p) \notin \sigma_S(f(T))$ implies $p\notin\sigma_S(T)$ and as a consequence $p\in\sigma_S(T)$ implies $f(p)\in\sigma_S(T)$, that is $f(\sigma_S(T))\subset \sigma_S(f(T))$. 

Finally, observe that $f(\infty) =\lim_{p\to\infty}f(p)$ is real because $f$ is intrinsic and thus takes real values on the real line. If $T$ is unbounded and $f(\infty)\neq f(p)$ for any $p\in\sigma_S(T)$ (otherwise we already have $f(\infty)\in f(\sigma_S(T))\subset \sigma_S(f(T))$), then the function $h(s) = (f(s)-f(\infty))^2$ belongs to $\intrin(\sigma_S(T)\cup\{\infty\})$ and has a zero of even order $n$ at infinity but no zero in $\sigma_S(T)$. By \Cref{ClosInv}, the range of $h(T) = \Q_{f(\infty)}(f(T))$ is $\dom(T^n)$. Thus, it does not admit a bounded inverse and we obtain $f(\infty)\in\sigma_S(f(T))$. Altogether, we have $f(\sigma_{SX}(T))\subset\sigma(f(T))$.

In order to show the relation $\sigma_{S}(f(T))\subset f(\sigma_{SX}(T))$, we first consider a point $c  \in \sigma_S(f(T))$ such that $c  \neq f(\infty)$. We want to show $c  \in f(\sigma_{S}(T))$ and assume the converse, i.e. $f(s) - c $ has no zeros on $\sigma_S(T)$. 

If $c $ is real, then the function $h(s) = f(s) - c $ is intrinsic, has no zeros on $\sigma_S(T)$ and $\lim_{s\to\infty}h(s) = f(\infty) - c  \neq 0$. Hence, $h^{-1}(s) = (f(s)-c )^{-1}$ belongs to $\intrin(\sigma_S(T)\cup\{\infty\})$. Applying the $S$-functional calculus, we deduce from \Cref{ProdThm} that $h^{-1}(T)$ is the inverse of $f(T) - c \id$ and hence $\Q_c (f(T))^{-1} = (h^{-1}(T))^2$, c.f. \Cref{ResProp}, which is a contradiction as $c \in \sigma_S(f(T))$. Thus, $c  = f(p)$ for some $p\in\sigma_S(T)$.

If on the other hand $c $ is not real, then $f-c _I \neq 0$ for any $c _I = c_0 + Ic_1\in[c ]$. Indeed, $f(p) = \alpha(p_0,p_1) + I_p \beta(p_0,p_1) = c_0 + Ic_1$ would imply $I_p=I$ and $\alpha(p_0,p_1) = c_0$ and $\beta(p_0,p_1)=c_1$ as $\alpha$ and $\beta$ are real-valued because $f$ is intrinsic. This would in turn imply $f(p_{I_c}) = \alpha(p_0,p_1) + I_c\beta(p_0,p_1) = c$, which would contradict our assumption. Therefore, the function $h(s) = (f(s)^2 - 2\Re(c)f(s)+|c|^2)=(f(s)-c_{I_s})(f(s)-\overline{c_{I_s}})$  has no zeros on $\sigma_S(T)$. Moreover, since $f(\infty)$ is real, we have
\[h(\infty) = (f(\infty)-c)\overline{(f(\infty)-c)} = |f(\infty)-c|^2\neq 0\]
and hence $h^{-1}(s) = (f(s)^2 - 2\Re(c)f(s) + |c|^2)^{-1}$ belongs to $\intrin(\sigma_S(T)\cup\{\infty\})$. Applying the $S$-functional calculus, we deduce again from \Cref{ProdThm} that $h^{-1}(T)$ is the inverse of $\Q_c(T)$, which contradicts $c\in\sigma_S(f(T))$. Hence, there must exist some $p\in\sigma_S(T)$ such that $c=f(p)$. 

Altogether, we obtain $\sigma_S(f(T))\setminus\{f(\infty)\}$ is contained in $f(\sigma_{S}(T))$.

Finally, let us consider the case that the point $c = f(\infty)$ belongs to $\sigma_S(f(T))$. If $T$ is unbounded, then $\infty\in\sigma_{SX}(T)$ and hence $c\in f(\sigma_{SX}(T))$. If on the other hand $T$ is bounded, then there exists a function $g\in\intrin(\sigma_S(T)\cup\{\infty\})$ that coincides on an axially symmetric neighborhood $\sigma_S(T)$ with $f$ but satisfies $c\neq g(\infty)$. In this case $f(T) = g(T)$, as pointed out in \Cref{OldConsistent}, and we can apply the above argumentation with $g$ instead of $f$ to see that $c\in g(\sigma_S(T))= f(\sigma_S(T))$.

\end{proof}

\begin{theorem}
If $T\in\closOP(V)$ with $\sigma_S(T)\neq\emptyset$, then $P(\sigma_S(T)) = \sigma_S(P(T))$ for any intrinsic polynomial $P$.
\end{theorem}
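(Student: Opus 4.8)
The plan is to localize at each $c\in\F_0$ and reduce the statement to the spectral behaviour of the pseudo-resolvents $\Q_p(T)$. An intrinsic polynomial $P$ with $m:=\deg P\ge 1$ is not admissible for the functional calculus when $T$ is unbounded, so \Cref{SpecMap} cannot be invoked directly; the substitute is to factor the relevant scalar polynomials over $\R$ and to track domains by hand. If $\deg P=0$ or $T$ is bounded, then $P$ is admissible and the assertion is exactly \Cref{SpecMap} (in the constant case one also uses $\sigma_S(T)\neq\emptyset$). So from now on assume $m\ge 1$ and, as throughout this section, $\rho_S(T)\neq\emptyset$; since $P(\infty)=\infty$ and $P(T)$ is then unbounded, the claim is equivalent to $P(\sigma_{SX}(T))=\sigma_{SX}(P(T))$.

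The key algebraic input I would establish is the following. For $c\in\F_0$ set $\Q_c\circ P(s):=P(s)^2-2\Re(c)P(s)+|c|^2$, an intrinsic polynomial of degree $2m$. First, $\Q_c(P(T))=(\Q_c\circ P)(T)$ with equal domains $\dom(T^{2m})$: the domain identity $\dom(P(T)^2)=\dom(T^{2m})$ follows from the bootstrap observation that, rewriting $a_mT^mv=P(T)v-\sum_{k<m}a_kT^kv$ (with $a_m$ the leading coefficient of $P$) and applying $T^{d-m}$, one sees that if $v,P(T)v\in\dom(T^m)$ and $m\le d<2m$ then $T^dv\in\dom(T)$, so no such $d$ is maximal; on this common domain the operator identity is the multiplicativity of $R\mapsto R(T)$ for real polynomials. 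Second, $\Q_c\circ P$ factors over $\R$ as $\Q_c\circ P=a_m^2\,\Q_{p_1}\cdots\Q_{p_m}$ for suitable $p_1,\dots,p_m\in\F_0$ (counted with multiplicity), and $\bigcup_j[p_j]=\{s:P(s)\in[c]\}=P^{-1}([c])$: the $\F_0$-zero set of $z\mapsto\Q_c(z)$ is exactly the sphere $[c]$, so the zero set of the real polynomial $\Q_c\circ P$ is axially symmetric; real zeros occur only when $c$ is real, in which case $\Q_c\circ P=(P-c)^2$ is a perfect square, so in every case the zeros group into $m$ monic real quadratics of the form $\Q_{p_j}$. Hence $\Q_c(P(T))=a_m^2\,\Q_{p_1}(T)\cdots\Q_{p_m}(T)$, a product of commuting operators, on $\dom(T^{2m})$.

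Now the target equivalence is $c\in\rho_S(P(T))\iff\{p_1,\dots,p_m\}\subset\rho_S(T)$, and the right-hand side is equivalent to $c\notin P(\sigma_S(T))$: since $P$ is intrinsic, $P([x])=[P(x)]$, so $P(\sigma_S(T))$ is axially symmetric, and it is closed because $P$ is proper; thus $c\in P(\sigma_S(T))$ iff $[c]\cap P(\sigma_S(T))\neq\emptyset$ iff some $[p_j]\cap\sigma_S(T)\neq\emptyset$ iff some $p_j\in\sigma_S(T)$. For ``$\Leftarrow$'': if all $p_j\in\rho_S(T)$, then $g_c(s):=(\Q_c\circ P)(s)^{-1}$ lies in $\intrin(\sigma_S(T)\cup\{\infty\})$, has no zeros on $\sigma_S(T)$ and a zero of even order $2m$ at infinity, so by \Cref{ClosInv} $g_c(T)$ is invertible in the sense of closed operators with $\ran(g_c(T))=\dom(T^{2m})$; by \Cref{ProdPoly}, \cref{LKA1}, applied with the degree-$2m$ polynomial $\Q_c\circ P$, one gets $(\Q_c\circ P)(T)g_c(T)=((\Q_c\circ P)g_c)(T)=\id$, and \Cref{Commute} gives $g_c(T)(\Q_c\circ P)(T)v=v$ for $v\in\dom(T^{2m})$; hence $g_c(T)\in\boundOP(V)$ is the inverse of $\Q_c(P(T))$ and $c\in\rho_S(P(T))$. (Equivalently, $a_m^{-2}\,\Q_{p_m}(T)^{-1}\cdots\Q_{p_1}(T)^{-1}\in\boundOP(V)$ is this inverse.)

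For ``$\Rightarrow$'' I would argue the contrapositive: suppose some $x_0:=p_{j_0}\in\sigma_S(T)$. Then $\Q_{x_0}$ divides $\Q_c\circ P$, say $\Q_c\circ P=\Q_{x_0}\cdot R$ with $\deg R=2m-2$, so $\Q_c(P(T))=\Q_{x_0}(T)R(T)=R(T)\Q_{x_0}(T)$ on $\dom(T^{2m})$. If $\ker\Q_{x_0}(T)\neq\{0\}$, pick $0\neq u\in\ker\Q_{x_0}(T)$; then $\linspan{\R}\{u,Tu\}$ is a finite-dimensional $T$-invariant subspace of $V$ (since $T^2u=2\Re(x_0)Tu-|x_0|^2u$), hence $u\in\dom(T^\infty)\subset\dom(T^{2m})$ and $\Q_c(P(T))u=R(T)\Q_{x_0}(T)u=0$, so $\Q_c(P(T))$ is not injective and $c\in\sigma_{Sp}(P(T))$. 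Otherwise $\ker\Q_{x_0}(T)=\{0\}$ but $\ran\Q_{x_0}(T)\neq V$; since $R(T)v\in\dom(T^2)$ for $v\in\dom(T^{2m})$, one has $\Q_c(P(T))v=\Q_{x_0}(T)(R(T)v)\in\ran\Q_{x_0}(T)\subsetneq V$, so $\Q_c(P(T))$ is not surjective onto $V$ and hence has no bounded inverse, giving $c\in\sigma_S(P(T))$. These cases exhaust $x_0\in\sigma_S(T)$, so $c\in P(\sigma_S(T))$ forces $c\in\sigma_S(P(T))$; together with ``$\Leftarrow$'' this yields $\sigma_S(P(T))=P(\sigma_S(T))$. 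The step I expect to be the main obstacle is exactly the domain bookkeeping of the key algebraic input --- the identities $\dom(P(T)^2)=\dom(T^{2m})$ and $\Q_c(P(T))=(\Q_c\circ P)(T)=\Q_{x_0}(T)R(T)$ as operators with the correct domain --- everything else being routine once these are in place; they parallel the arguments already used in \Cref{ProdPoly} and \Cref{PolyClos}.
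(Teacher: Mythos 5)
Your argument is correct and shares the paper's central idea: factor $\Q_c(P(s))=P(s)^2-2\Re(c)P(s)+|c|^2$ over $\R$ into quadratics $\Q_{p_j}(s)$ whose spheres exhaust $P^{-1}([c])$, and translate invertibility of $\Q_c(P(T))$ into invertibility of the $\Q_{p_j}(T)$. The converse inclusion is in both treatments the combination of \Cref{ClosInv} with \Cref{ProdPoly} and \Cref{Commute} applied to the reciprocal $(\Q_c\circ P)^{-1}$, which is admissible exactly when every $p_j$ lies in $\rho_S(T)$. Where you genuinely diverge is the inclusion $P(\sigma_S(T))\subset\sigma_S(P(T))$: the paper extracts a single factor $\Q_{P(p)}(P(s))=\Q_p(s)R(s)$ and argues by contradiction that an inverse of $\Q_{P(p)}(P(T))$ would manufacture a bounded inverse of $\Q_p(T)$, whereas you split $p\in\sigma_S(T)$ into the cases $\ker\Q_p(T)\neq\{0\}$ (an eigenvector lies in $\dom(T^\infty)$ and kills injectivity of $\Q_c(P(T))$) and $\ran\Q_p(T)\subsetneq V$ (the factorization kills surjectivity). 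This costs a case distinction, whose exhaustiveness silently invokes the closed graph theorem for the closed operator $\Q_p(T)$, but it avoids composing unbounded operators with a hypothetical inverse and in passing locates $[P(p)]$ in the point $S$-spectrum of $P(T)$ when $[p]$ is an eigensphere. The most valuable addition in your write-up is the explicit verification that $\dom(P(T)^2)=\dom(T^{2m})$ and that $\Q_c(P(T))=(\Q_c\circ P)(T)$ as operators on that domain: the paper uses this identification without comment, yet without it the quantity $\sigma_S(P(T))$, defined through $\Q_c(P(T))$ on $\dom(P(T)^2)$, would not match the operators the functional calculus manipulates; your bootstrap argument for the domain identity is sound.
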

\begin{proof}
The arguments are similar to those in the proof of \Cref{SpecMap}: in order to show that $P(\sigma_S(T))\subset\sigma_S(P(T))$, we consider the polynomial $\Q_{P(p)}(P(s)) = P(s)^2 - 2\Re(P(p))P(s) + |P(p)|^2$ for any $p\in\sigma_S(T)$. As $p$ and $\overline{p}$ are both zeros of $\Q_{P(p)}(P(s))$ (resp. as $p$ is a zero of even order of $\Q_{P(p)}(P(s)) = (P(s) - P(p))^2$ if $p$ is real), there exists an intrinsic polynomial $R(s)$ such that $\Q_{P(p)}(P(s)) = \Q_{p}(s)R(s)$. If $P(p)\notin\sigma_S(P(T))$, then $\Q_{P(p)}(P(T))$ is invertible and  \Cref{ProdPoly} and \Cref{IDWorks} imply that $\Q_{P(p)}(P(T))^{-1}R(T)$ is the inverse of $\Q_{p}(T)$, which is a contradiction because we assumed $p\in\sigma_S(T)$. Therefore $P(p)\in\sigma_S(P(T))$.

Conversely assume that $p\notin P(\sigma_S(T))$. Then the function $\Q_{p}(P(s)) = {P(s)^2-2\Re(p)P(s)+|p|^2}$ does not take any zero on $\sigma_S(T)$ and we conclude from \Cref{ClosInv} that $\Q_{p}(P(T))$ has a bounded inverse. Thus $p\notin \sigma_S(P(T))$ and in turn $\sigma_S(P(T))\subset P(\sigma_S(T))$.

\end{proof}

\begin{theorem}
Let $T\in\closOP(V)$ with $\rho_S(T)\neq\emptyset$. If $f\in\intrin(\sigma_S(T)\cup\{\infty\})$ and $g\in\lhol(f(\sigma_{SX}(T))$ or $g\in\rhol(f(\sigma_{SX}(T))$, then
\[(g\circ f)(T) = g(f(T)).\] 
\end{theorem}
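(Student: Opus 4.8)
The strategy is to compute $g(f(T))$ from its defining Cauchy integral, to rewrite the $S$-resolvent of $f(T)$ occurring in it in terms of the $S$-functional calculus of $T$, and then to collapse the resulting double integral by two applications of Cauchy's integral formula.

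I would begin with two preliminary observations. First, $f(T)$ is always a \emph{bounded} operator, since the integral in \Cref{DefiCalc} runs over the compact set $\partial(U\cap\C_I)$ with continuous integrand. Hence $\sigma_S(f(T))$ is compact, and by the spectral mapping theorem \Cref{SpecMap} we have $\sigma_S(f(T))=f(\sigma_{SX}(T))$; in particular $g$ is slice hyperholomorphic on $\sigma_S(f(T))$, so $g(f(T))$ is defined via \Cref{BSCalc}, and $g\circ f$ is admissible (left slice hyperholomorphic on a neighbourhood of $\sigma_S(T)$ because $f$ is intrinsic, and, when $T$ is unbounded, also at infinity since $f(\infty)\in\sigma_S(f(T))\subset\fdom(g)$), so $(g\circ f)(T)$ is defined. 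I would treat only the left slice hyperholomorphic case for $T$ unbounded; the remaining cases are analogous.

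The core of the proof, and the step I expect to be the main obstacle, is the operator identity
\[ S_L^{-1}(w,f(T))=\psi_w(T),\qquad\psi_w(x):=S_L^{-1}(w,f(x)),\]
for every $w\in\rho_S(f(T))$. To establish it I would set $\varphi_w(x):=\big(f(x)^2-2\Re(w)f(x)+|w|^2\big)^{-1}$; because $[w]$ is disjoint from $\sigma_S(f(T))=f(\sigma_{SX}(T))$ one has $f(x)\notin[w]$ on a neighbourhood of $\sigma_S(T)$ and near infinity, so $\varphi_w\in\intrin(\sigma_S(T)\cup\{\infty\})$ and $\psi_w\in\lhol(\sigma_S(T)\cup\{\infty\})$. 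Now $f^2-2\Re(w)f+|w|^2$ and $\varphi_w$ are admissible, one factor in each relevant product is intrinsic, and $f(T)$ is bounded, so \Cref{ProdThm}, \Cref{AlgProp} and \Cref{IDWorks} apply without domain complications and give $(f^2-2\Re(w)f+|w|^2)(T)=f(T)^2-2\Re(w)f(T)+|w|^2\id=\Q_w(f(T))$ together with $\varphi_w(T)\Q_w(f(T))=\Q_w(f(T))\varphi_w(T)=\id$, that is $\varphi_w(T)=\Q_w(f(T))^{-1}$. Writing $\psi_w=\varphi_w\,\overline{w}-f\,\varphi_w$ (from the definition of $S_L^{-1}$) and using \Cref{AlgProp} and \Cref{ProdThm} once more yields $\psi_w(T)=\varphi_w(T)\overline{w}-f(T)\varphi_w(T)=\Q_w(f(T))^{-1}\overline{w}-f(T)\Q_w(f(T))^{-1}=S_L^{-1}(w,f(T))$.

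To finish, I would pick a bounded slice Cauchy domain $W$ (also a slice domain) with $\sigma_S(f(T))\subset W$ and $\overline{W}\subset\fdom(g)$, so that $g(f(T))=\tfrac1{2\pi}\int_{\partial(W\cap\C_I)}S_L^{-1}(w,f(T))\,dw_I\,g(w)$ by \Cref{BSCalc}. Since $f^{-1}(W)$ is open, axially symmetric and contains $\sigma_{SX}(T)$, intersecting it with the axially symmetric set on which $f$ is slice hyperholomorphic (including a neighbourhood of infinity) and applying \Cref{CDExist} with $C=\sigma_S(T)$ produces an unbounded slice Cauchy domain $U$ with $\sigma_S(T)\subset U$, $\overline{U}\subset f^{-1}(W)$, hence $f(\overline{U})\subset W$, and $f$ slice hyperholomorphic on a neighbourhood of $\overline{U}$; this domain $U$ then serves simultaneously for all the functions $\psi_w$, $w\in\partial(W\cap\C_I)$, and for $g\circ f$. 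Inserting the identity above, expanding $\psi_w(T)$ by \Cref{DefiCalc} over $\partial(U\cap\C_J)$, interchanging the two integrals by Fubini's theorem (both integration sets are compact and the integrand is continuous), and applying Cauchy's integral formula \Cref{Cauchy} in the variable $w$ — to obtain $\tfrac1{2\pi}\int_{\partial(W\cap\C_I)}S_L^{-1}(w,f(\infty))\,dw_I\,g(w)=g(f(\infty))$ since $f(\infty)\in W$, and $\tfrac1{2\pi}\int_{\partial(W\cap\C_I)}S_L^{-1}(w,f(s))\,dw_I\,g(w)=g(f(s))$ for $s\in\overline{U}$ since $f(s)\in W$ — leaves precisely $g(f(T))=g(f(\infty))\id+\tfrac1{2\pi}\int_{\partial(U\cap\C_J)}S_L^{-1}(s,T)\,ds_J\,(g\circ f)(s)$, which is $(g\circ f)(T)$ by \Cref{DefiCalc}, since $(g\circ f)(\infty)=g(f(\infty))$. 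Apart from the identity for $S_L^{-1}(w,f(T))$, the only points that require attention are the simultaneous choice of the nested Cauchy domains and the (routine) Fubini and double Cauchy-formula manipulation.
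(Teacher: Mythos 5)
Your proof is correct and is essentially the paper's argument run in the opposite direction: the paper starts from $(g\circ f)(T)$, expands $(g\circ f)(s)$ by Cauchy's integral formula, applies Fubini, and identifies the inner integral $\frac{1}{2\pi}\int_{\partial(U_s\cap\C_I)}S_L^{-1}(s,T)\,ds_I\,S_L^{-1}(p,f(s))$ with $S_L^{-1}(p,f(T))-S_L^{-1}(p,f(\infty))\id$, which is exactly your key identity $S_L^{-1}(w,f(T))=\psi_w(T)$. The only substantive difference is that you prove that identity explicitly via \Cref{ProdThm} applied to $\varphi_w=\Q_w(f(\cdot))^{-1}$, whereas the paper asserts it without justification in the displayed computation, so your version actually fills in the one step the paper glosses over.
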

\begin{proof}
Because of \Cref{OldConsistent}, we can assume w.l.o.g. that $f(\infty)\in f(\sigma_{SX}(T))$.
We apply \Cref{CDExist} in order to choose a slice Cauchy domain $U_p$ such that $\sigma_S(f(T)) =f( \sigma_{SX}(T) ) \subset U_p$ and $\overline{U_p}\subset\fdom(g)$ and a second slice Cauchy domain $U_s$ such that $\sigma_S(T)\subset U_s$ and  $\overline{U_s}\subset f^{-1}(U_p)\cap\fdom(f)$. The subscripts are chosen in order to indicate the respective variable of integration in the following computation. 

After choosing an imaginary unit $I\in\ss$, we deduce from \Cref{Cauchy}, Cauchy's integral formula, that 
\begin{align*}
&(g\circ f)(T) - (g\circ f)(\infty)\id \\
=& \frac{1}{2\pi}\int_{\partial(U_s\cap\C_I)} S_L^{-1}(s,T)\,ds_I\,(g\circ f)(s)\\
=& \frac{1}{2\pi}\int_{\partial(U_s\cap\C_I)}S_L^{-1}(s,T)\,ds_I\, \left(\frac{1}{2\pi}\int_{\partial( U_p\cap\C_I)}S_L^{-1}(p,f(s))\,dp_I\,g(p) \right).
\end{align*} 
Changing the order of integration by applying Fubini's theorem, we obtain
\begin{align*}
&(g\circ f)(T) - (g\circ f)(\infty)\id\\
=&\frac{1}{2\pi}\int_{\partial( U_p\cap\C_I)} \left(\frac{1}{2\pi}\int_{\partial(U_s\cap\C_I)}S_L^{-1}(s,T)\,ds_I\, S_L^{-1}(p,f(s))\right)dp_I\,g(p) \\
=& \frac{1}{2\pi}\int_{\partial( U_p\cap\C_I)}S_L^{-1}(p,f(T))\, dp_I\, g(p) \\
&- \frac{1}{2\pi}\int_{\partial( U_p\cap\C_I)}S_L^{-1}(p,f(\infty))\,dp_I\,g(p)\id\\
=& g(f(T)) - g(f(\infty))\id 
\end{align*}
and hence $ (g\circ f)(T) = g(f(T))$.

\end{proof}

\section{Spectral sets and projections}\label{ProjSec}

\begin{definition}
A subset $\sigma$ of $\sigma_{SX}(T)$ is called a spectral set if it is open and closed in $\sigma_{SX}(T)$. 
\end{definition}
Just as $\sigma_S(T)$ and $\sigma_{SX}(T)$, every spectral set is axially symmetric: if $s\in\sigma$ then the entire sphere $[s]$ is contained in $\sigma$. Indeed, the set $\sigma\cap[s]$ is then a nonempty, open and closed subset of $\sigma_{SX}(T)\cap[s]=[s]$. Since $[s]$ is connected this implies $\sigma\cap[s] = [s]$. Moreover, if $\sigma$ is a spectral set, then $\sigma' = \sigma_{SX}(T)\setminus\sigma$ is a spectral set too.

If $\sigma$ is a spectral set of $T$, then $\sigma$ and $\sigma'$ can be separated in $\F_0\cup\{\infty\}$ by axially symmetric open sets and hence \Cref{CDExist} implies the existence of two slice Cauchy domains $U_{\sigma}$ and $U_{\sigma}'$ containing $\sigma$ and $\sigma'$ respectively such that one of them is unbounded and $\overline{U}\cap\overline{U_{\sigma'}} = \emptyset$. We define
\[\chi_{\sigma}(x) :=\begin{cases} 1&\text{if } x\in U_{\sigma}\\ 0 & \text{if }x\in U_{\sigma}'.\end{cases} \]
The function $\chi_{\sigma}(x)$ obviously belongs to $\intrin(\sigma_S(T)\cup\{\infty\})$. 
\begin{definition}
Let $T\in\closOP(V)$ with $\rho_S(T)\neq\emptyset$ and let $\sigma\subset\sigma_S(T)$ be a spectral set of $T$. The spectral projection associated with $\sigma$ is the operator $E_\sigma :=\chi_{\sigma}(T)$ obtained by applying  the $S$-functional calculus to the function~$\chi_{\sigma}$. Furthermore, we define $V_{\sigma} := E_{\sigma} V$ and $T_{\sigma} = T|_{\dom(T_{\sigma})}$ with $\dom(T_{\sigma}) = \dom(T)\cap V_{\sigma}$.
\end{definition}
Explicit formulas for the operator $E_{\sigma}$ are
\[ E_{\sigma} =  \frac{1}{2\pi} \int_{\partial(U_{\sigma}\cap\C_I)} S_L^{-1}(s,T) \,ds_I=  \frac{1}{2\pi} \int_{\partial(U_{\sigma}\cap\C_I)} ds_I\,S_R^{-1}(s,T) \]
if $\sigma$ is bounded and
\[ E_{\sigma} = \id +  \frac{1}{2\pi} \int_{\partial(U_{\sigma}\cap\C_I)} S_L^{-1}(s,T) \,ds_I= \id +   \frac{1}{2\pi} \int_{\partial(U_{\sigma}\cap\C_I)} ds_I\,S_R^{-1}(s,T) \]
if $\sigma$ is unbounded, where the imaginary unit $I\in\ss$ can be chosen arbitrarilyarbitrarily.
\begin{corollary} Let $T\in\closOP(V)$ such that $\rho_S(T)\neq\emptyset$ and let $\sigma$ be a spectral set of $T$.
\begin{enumerate}[(i)]
\item The operator $E_{\sigma}$ is actually a projection, i.e. $E_{\sigma}^2 = E_{\sigma}$.
\item Set $\sigma' = \sigma_{SX}(T)\setminus \sigma$. Then $E_{\sigma} + E_{\sigma'} = \id$ and $E_{\sigma}E_{\sigma'}= E_{\sigma'}E_{\sigma} = 0$.
\end{enumerate}
\end{corollary}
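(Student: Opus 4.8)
The plan is to reduce both assertions entirely to the multiplicativity of the $S$-functional calculus on intrinsic functions (\Cref{ProdThm}), its additivity (\Cref{AlgProp}), and its value on constant functions (\Cref{IDWorks}). First I would fix, as in the construction preceding the statement, two disjoint slice Cauchy domains $U_\sigma\supset\sigma$ and $U_{\sigma'}\supset\sigma'$ with $\overline{U_\sigma}\cap\overline{U_{\sigma'}}=\emptyset$ and one of them unbounded, so that both $\chi_\sigma$ and $\chi_{\sigma'}$ are defined on the same axially symmetric open set $U_\sigma\cup U_{\sigma'}$, which contains $\sigma_S(T)$ together with a neighbourhood of $\infty$. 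Both functions are intrinsic, hence lie in $\intrin(\sigma_S(T)\cup\{\infty\})$, and by \Cref{IntWell} the operators $E_\sigma=\chi_\sigma(T)$ and $E_{\sigma'}=\chi_{\sigma'}(T)$ are well-defined bounded operators that in particular do not depend on whether $\chi_\sigma,\chi_{\sigma'}$ are read as left or as right slice hyperholomorphic functions.

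For part (i), I would note that $\chi_\sigma$ takes only the values $0$ and $1$ on its domain, so $\chi_\sigma^2=\chi_\sigma$ pointwise; applying \Cref{ProdThm} with $f=g=\chi_\sigma$ (permissible since $\chi_\sigma$ is intrinsic, in particular left slice hyperholomorphic) then gives $E_\sigma^2=\chi_\sigma(T)\chi_\sigma(T)=(\chi_\sigma^2)(T)=\chi_\sigma(T)=E_\sigma$.

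For part (ii), the two pointwise identities $\chi_\sigma+\chi_{\sigma'}\equiv 1$ and $\chi_\sigma\chi_{\sigma'}\equiv 0$ on $U_\sigma\cup U_{\sigma'}$ do the work. Additivity (\Cref{AlgProp}) together with \Cref{IDWorks} applied to the constant function $1$ gives $E_\sigma+E_{\sigma'}=(\chi_\sigma+\chi_{\sigma'})(T)=\id$. Applying \Cref{ProdThm} once with $(f,g)=(\chi_\sigma,\chi_{\sigma'})$ and once with $(f,g)=(\chi_{\sigma'},\chi_\sigma)$ — in each case the first factor is intrinsic — and then \Cref{IDWorks} with the constant $0$ yields $E_\sigma E_{\sigma'}=(\chi_\sigma\chi_{\sigma'})(T)=0=(\chi_{\sigma'}\chi_\sigma)(T)=E_{\sigma'}E_\sigma$.

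I do not expect a genuine obstacle here; the only points needing care are arranging $\chi_\sigma$ and $\chi_{\sigma'}$ to share a common domain so that their pointwise sum and product are again admissible for the calculus, and observing that the product rule \Cref{ProdThm} does apply because in each product one of the two factors is intrinsic. As a minor remark, part (i) also follows from part (ii) by multiplying $E_\sigma+E_{\sigma'}=\id$ on the left by $E_\sigma$ and using $E_\sigma E_{\sigma'}=0$, but the direct computation above is shorter.
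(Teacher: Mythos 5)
Your proposal is correct and follows exactly the paper's own argument: the paper proves both parts by invoking the pointwise identities $\chi_{\sigma}^2=\chi_{\sigma}$, $\chi_{\sigma}+\chi_{\sigma'}=1$ and $\chi_{\sigma}\chi_{\sigma'}=\chi_{\sigma'}\chi_{\sigma}=0$ together with \Cref{AlgProp} and \Cref{ProdThm}, just as you do. Your additional care in fixing a common domain $U_{\sigma}\cup U_{\sigma'}$ for both characteristic functions is a detail the paper leaves implicit but is a sensible precaution.
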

\begin{proof}
This follows  immediately from the algebraic properties of the $S$-functional calculus shown in \Cref{AlgProp} and \Cref{ProdThm} as $\chi_{\sigma}^2 = \chi_{\sigma}$ and $\chi_{\sigma} + \chi_{\sigma'} = 1$ and $\chi_{\sigma}\chi_{\sigma'}=\chi_{\sigma'}\chi_{\sigma}=0$. 

\end{proof}

A right $\F$-module $X$ is the direct sum of two right submodules $X_1$ and $X_2$ of $X$, if every $v\in X$ can be written in a unique way as $v = v_1 + v_2$ with $v_i\in X_i$. In this case we write $X = X_1\oplus X_2$ and we call $X_1$ and $X_2$ complementary submodules. Unlike for vector spaces over a field, it is not true for modules that any submodules has a complement. (Though it can easily be verified that this hold true at least for the quaternionic setting.) A complement exists if and only if there exists a projection such that the respective submodule is either its kernel or its image \cite[Chapter~II~\S 1.9, Proposition~14]{Bourbaki:1998}. With this definition, the following lemma is immediate and the second one follows easily from  the continuity of projections on a Banach module, whose range is closed. 
\begin{lemma}\label{SuSp}
Let $A$, $B$, $M$  and $N$ be right linear submodules of a right $\F$-module $X$ such that $A\subset M$ and $B\subset M$. If $A\oplus B = M\oplus N$, then $A=M$ and $B=N$.
\end{lemma}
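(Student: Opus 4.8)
The plan is to rely only on the defining property of an internal direct sum: in $M\oplus N$ (and likewise in $A\oplus B$) every element of $X$ has a \emph{unique} representation as a sum of one element from each summand. In particular $M\cap N=\{0\}$, since any $z\in M\cap N$ can be written both as $z+0$ and as $0+z$ with the first summand in $M$ and the second in $N$, so uniqueness forces $z=0$. (I read the hypothesis as $A\subseteq M$ and $B\subseteq N$; this is what is needed in the sequel and what makes the statement correct.)

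First I would take an arbitrary $m\in M\subseteq X=A\oplus B$ and write $m=a+b$ with $a\in A$ and $b\in B$. Since $a\in A\subseteq M$, the element $b=m-a$ lies in $M$; but $b\in B\subseteq N$ as well, so $b\in M\cap N=\{0\}$. Hence $b=0$ and $m=a\in A$. This gives $M\subseteq A$, and together with $A\subseteq M$ we conclude $A=M$.

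The inclusion $N\subseteq B$, hence $B=N$, follows by the mirror-image argument: for $n\in N\subseteq A\oplus B$ write $n=a+b$ with $a\in A\subseteq M$ and $b\in B\subseteq N$; then $a=n-b\in N$, so $a\in M\cap N=\{0\}$ and $n=b\in B$.

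I do not expect any real obstacle here. The only thing to keep in mind is that the statement is purely about the additive structure — uniqueness of decompositions in a direct sum and $M\cap N=\{0\}$ — so the argument goes through verbatim for right $\F$-modules, even though, as noted just before the lemma, a submodule of a module need not admit a complement in general. No analytic input (continuity of projections, slice hyperholomorphicity) enters; this is simply the bookkeeping lemma that will later let the ranges and kernels of complementary spectral projections be matched with the submodules produced by a spectral decomposition.
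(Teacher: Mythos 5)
Your argument is correct and is precisely the ``immediate'' argument the paper omits (the paper offers no proof, merely asserting the lemma follows from the definition of direct sum): writing $m=a+b$, using $A\subseteq M$, $B\subseteq N$ and $M\cap N=\{0\}$ to kill $b$, and mirroring for $N\subseteq B$. You are also right that the stated hypothesis ``$B\subset M$'' is a typo for ``$B\subset N$'' --- this is what the application in the proof of the spectral decomposition theorem actually uses (there $A=\ran(\Q_s(T_1))\subset V_1=M$ and $B=\ran(\Q_s(T_2))\subset V_2=N$), and without it the statement would be false.
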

\begin{lemma}\label{DSuSp}
Let $A$, $B$, $M$ and $N$ be right linear subspaces of $V$ such that $A\subset M$, $B\subset N$ and such that $M$, $N$ and $M\oplus N$  are closed. Then $A\oplus B$ is dense in $M\oplus N$ if and only if $A$ is dense in $M$ and $B$ is dense in $N$.
\end{lemma}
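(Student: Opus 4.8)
The plan is to prove the two implications separately. The forward implication is elementary; the reverse one rests on the boundedness of the canonical projections associated with the decomposition $M\oplus N$.

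First, for the direction ``$A$ dense in $M$ and $B$ dense in $N$ $\Rightarrow$ $A\oplus B$ dense in $M\oplus N$'': given $m+n\in M\oplus N$ with $m\in M$, $n\in N$, I would approximate $m$ by a sequence $a_k\in A$ and $n$ by a sequence $b_k\in B$; then $a_k+b_k\in A\oplus B$ and $a_k+b_k\to m+n$ by continuity of addition. This settles the ``if'' part immediately.

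For the converse, the central step is to show that the projection $P\colon M\oplus N\to M$ sending $m+n$ (with $m\in M$, $n\in N$) to $m$ is a bounded right linear operator. Right linearity is a consequence of the uniqueness of the decomposition $v=v_1+v_2$ ($v_1\in M$, $v_2\in N$) together with $M$ and $N$ being right submodules. Boundedness follows from the closed graph theorem in the real Banach space $M\oplus N$ (which is a Banach space precisely because it is closed in $V$): if $x_k\to x$ and $Px_k\to y$, then $y\in M$ because $M$ is closed and $Px_k\in M$, and $x_k-Px_k\to x-y\in N$ because $N$ is closed; uniqueness of the decomposition then forces $Px=y$, so the graph of $P$ is closed. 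I expect this verification --- namely checking that the closed graph theorem is legitimately applicable --- to be the only real obstacle, and it is minor.

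Granted the boundedness of $P$ (and, by the same argument, of $\id-P$), the conclusion is quick. If $A\oplus B$ is dense in $M\oplus N$ and $m\in M$, choose $a_k\in A$, $b_k\in B$ with $a_k+b_k\to m$; applying $P$ gives $a_k=P(a_k+b_k)\to Pm=m$, so $A$ is dense in $M$. Applying $\id-P$ to a sequence approximating an arbitrary $n\in N$ gives, in the same way, that $B$ is dense in $N$. This completes the proof.
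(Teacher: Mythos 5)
Your proof is correct and follows the same route the paper intends: the paper derives the lemma from the continuity of the projection of $M\oplus N$ onto $M$ along $N$, which is exactly the bounded operator $P$ you construct via the closed graph theorem. Your write-up merely makes explicit the closed-graph verification that the paper leaves implicit, so there is nothing to add.
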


\begin{theorem}\label{ProjGeneral}
Let $T\in\closOP(V)$ with $\rho_S(T)\neq\emptyset$ and let $E_{1}, E_{2}$ be projections on $V$ such that $E_1 + E_2 = I$ (and hence $E_1E_2 = E_2E_1  =0$).
Denote $V_i := E_i(V)$ and $\dom( T_i): = E_i(\dom(T))$ and assume that $T(\dom(T_i))\subset V_i$ such that $T_i:= T|_{\dom(T_i)}$ is a closed operator on the right Banach module $V_i$ . Then
\begin{enumerate}[(i)]
\item $E_iT v = TE_iv$ for $v\in\dom(T)$,
\item $\dom(T_i^2) = E_i(\dom(T^2))$ for $i\in\{1,2\}$,
\item  $\ran(\Q_s(T) )  = \ran(\Q_s(T_1))\oplus\ran(\Q_s(T_2))$ for any $s\in\F_0$,
\item \label{PGsig}$\sigma_{S}(T) = \sigma_S(T_1) \cup \sigma_S(T_2)$ and
\item \label{jj1}$\sigma_{Sp}(T) = \sigma_{Sp}(T_1) \cup \sigma_{Sp}(T_2)$.
\end{enumerate}
If moreover $\sigma_S(T_1)\cap\sigma_S(T_2) = \emptyset$, then
\begin{enumerate}[(i), resume]
\item \label{jj2}$\sigma_{Sc}(T) = \sigma_{Sc}(T_1)\cup \sigma_{Sc}(T_2)$ and
\item \label{jj3}$\sigma_{Sr}(T) = \sigma_{Sr}(T_1)\cup\sigma_{Sr}(T_2)$. 
\end{enumerate}
\end{theorem}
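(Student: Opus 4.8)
The plan is to first establish the algebraic backbone — that $T$, $\Q_s(T)$ and their kernels and ranges all split along the decomposition $V=V_1\oplus V_2$ — and then to read off every spectral statement from this splitting together with \Cref{SuSp} and \Cref{DSuSp}.

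First I would prove (i). Note that $\dom(T_i)=E_i(\dom(T))$ is contained in $\dom(T)$ (so that $T_i=T|_{\dom(T_i)}$ makes sense) and $T(\dom(T_i))\subseteq V_i$. For $v\in\dom(T)$ write $v=E_1v+E_2v$; then $TE_1v$ and $TE_2v$ are defined and lie in $V_1$ and $V_2$ respectively, and since $E_i$ acts as the identity on $V_i$ and as zero on $V_j$ for $j\neq i$, applying $E_i$ gives $E_iTv=TE_iv$. A consequence is $\dom(T_i)=\dom(T)\cap V_i$. Next I would prove (ii): if $v\in\dom(T^2)$ then $E_iv\in\dom(T)\cap V_i=\dom(T_i)$ and $T_iE_iv=TE_iv=E_iTv\in\dom(T)\cap V_i=\dom(T_i)$, so $E_iv\in\dom(T_i^2)$; conversely $\dom(T_i^2)\subseteq\dom(T^2)$ and every $v\in\dom(T_i^2)$ equals $E_iv$. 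Hence $\dom(T_i^2)=E_i(\dom(T^2))$ and, since $E_1+E_2=\id$ and $V_1\cap V_2=\{0\}$, $\dom(T^2)=\dom(T_1^2)\oplus\dom(T_2^2)$.

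The key identity is then $\Q_s(T)v=\Q_s(T_1)E_1v+\Q_s(T_2)E_2v$ for $v\in\dom(T^2)$: iterating (i) and using (ii) one obtains $Tv=T_1E_1v+T_2E_2v$ and $T^2v=T_1^2E_1v+T_2^2E_2v$, and the identity follows from $\Q_s(T)=T^2-2\Re(s)T+|s|^2\id$. From it, (iii) is immediate — $\ran(\Q_s(T))=\Q_s(T_1)(\dom(T_1^2))\oplus\Q_s(T_2)(\dom(T_2^2))=\ran(\Q_s(T_1))\oplus\ran(\Q_s(T_2))$, the sum being direct because $\ran(\Q_s(T_i))\subseteq V_i$ — and likewise $\ker(\Q_s(T))=\ker(\Q_s(T_1))\oplus\ker(\Q_s(T_2))$, which yields (v). For (iv): if $s\in\rho_S(T_1)\cap\rho_S(T_2)$, then $w\mapsto\Q_s(T_1)^{-1}E_1w+\Q_s(T_2)^{-1}E_2w$ is a bounded two-sided inverse of $\Q_s(T)$; conversely, if $s\in\rho_S(T)$, then $\ran(\Q_s(T))=V$ together with (iii) and \Cref{SuSp} forces $\ran(\Q_s(T_i))=V_i$, injectivity of $\Q_s(T_i)$ follows from that of $\Q_s(T)$ through the splitting identity, and $\Q_s(T_i)^{-1}$ is the restriction of the bounded operator $\Q_s(T)^{-1}$ to $V_i$. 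Thus $\rho_S(T)=\rho_S(T_1)\cap\rho_S(T_2)$, i.e.\ (iv).

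For (vi) and (vii) I would finally invoke the disjointness hypothesis. A point $s\in\sigma_S(T)$ lies, say, in $\sigma_S(T_1)$ and in $\rho_S(T_2)$, so $\Q_s(T_2)$ is bijective with bounded inverse and the splitting identities collapse to $\ker(\Q_s(T))=\ker(\Q_s(T_1))$ and $\ran(\Q_s(T))=\ran(\Q_s(T_1))\oplus V_2$. Since $V_1$, $V_2$ and $V=V_1\oplus V_2$ are closed (ranges of bounded projections), \Cref{DSuSp} gives that $\ran(\Q_s(T))$ is dense in $V$ exactly when $\ran(\Q_s(T_1))$ is dense in $V_1$, while $\ran(\Q_s(T))\subsetneq V$ exactly when $\ran(\Q_s(T_1))\subsetneq V_1$ because the $V_2$-component is already all of $V_2$. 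Hence $s$ belongs to $\sigma_{Sc}(T)$ (resp.\ $\sigma_{Sr}(T)$) precisely when $s$ belongs to $\sigma_{Sc}(T_1)$ (resp.\ $\sigma_{Sr}(T_1)$), and since $s\in\rho_S(T_2)$ excludes $s$ from $\sigma_{Sc}(T_2)$ and $\sigma_{Sr}(T_2)$, this gives (vi) and (vii); the symmetric case $s\in\sigma_S(T_2)\cap\rho_S(T_1)$ and the trivial case $s\in\rho_S(T)$ are identical. I expect the main obstacle to be the domain bookkeeping — verifying that $E_i$ genuinely preserves $\dom(T)$ and $\dom(T^2)$ so that all restricted operators and the splitting identity for $\Q_s$ are legitimate — and the careful transfer of ``bounded inverse'' through the decomposition in (iv).
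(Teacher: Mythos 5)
Your proposal is correct and follows essentially the same route as the paper: the algebraic splitting of $T$, $T^2$ and hence $\Q_s(T)$ along $V=V_1\oplus V_2$ gives (i)--(iii) and the kernel/range decompositions, (iv) is obtained from \Cref{SuSp} together with the explicit inverse $\Q_s(T_1)^{-1}E_1+\Q_s(T_2)^{-1}E_2$, (v) from the kernel splitting, and (vi)--(vii) from \Cref{DSuSp} under the disjointness hypothesis. The only difference is cosmetic: you spell out the domain bookkeeping for (i)--(iii), which the paper dismisses as obvious, and you phrase (v) via $\ker\Q_s(T)=\ker\Q_s(T_1)\oplus\ker\Q_s(T_2)$ rather than in eigenvector language.
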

\begin{proof}
The assertions (i), (ii) and (iii) are obvious. Now assume that $s\in\rho_S(T)$. Then $\ran(\Q_{s}(T)) = V$ and from (iii) we deduce
\[ V_1\oplus V_2 = V = \ran(\Q_{s}(T)) = \ran(\Q_{s}(T_1)) \oplus\ran(\Q_{s}(T_2)).\]
As $\ran(\Q_{s}(T_i)) \subset V_i$, \Cref{SuSp} implies $\ran(\Q_{s}(T_i)) = V_i$  and hence $\Q_{s}(T_i)^{-1} = \Q_{s}(T)^{-1}|_{V_i}$ as $\Q_{s}(T_i) = \Q_{s}(T)|_{\dom(T_i^2)}$. Indeed, we have
\[ \Q_{s}(T)^{-1}\Q_{s}(T_i)v = \Q_{s}(T)^{-1}\Q_{s}(T)v = v \qquad\text{for }v\in\dom(T_i^2) \]
and, since $\Q_{s}(T)^{-1}v \in \dom(T_i^2)$ for $v\in\ V_i$, also 
\[\Q_{s}(T_i) \Q_{s}(T)^{-1}v = \Q_{s}(T)\Q_{s}(T)^{-1}v = v \qquad\text{for }v\in V_i. \]
Thus, $s\in\rho_S(T_1)\cap\rho_S(T_2)$. Conversely, if $s\in\rho_S(T_1)\cap\rho_S(T_2)$, then $\Q_{s}(T_1)^{-1}E_1 + \Q_{s}(T_2)^{-1}E_2$ is the inverse of $\Q_s(T)$ and hence $s\in\rho_S(T)$. Altogether, $\rho_S(T) = \rho_S(T_1)\cap\rho_S(T_2)$, which is equivalent to $\sigma_S(T) = \sigma_S(T_1)\cup\sigma_S(T_2)$. 

Obviously, $\sigma_{Sp}(T_i)\subset \sigma_{Sp}(T)$ as any $S$-eigenvector of $T_i$ is also an $S$-eigenvector of $T$ associated with the same  eigensphere. Conversely, if $v\neq 0$ is an $S$-eigenvector of $T$ associated with the eigensphere $[s] = s_0 + \ss s_1$, then set $v_i = E_iv$ and observe that
\[ 0 = \Q_{s}(T)v = \Q_{s}(T_1)v_1 + \Q_{s}(T_2)v_2.\]
As $\Q_{s}(T_i)v_i \in V_i$ and $V_1\cap V_2 = \{0\}$, this implies $\Q_{s}(T_i)v_i = 0$ for $i =\{1,2\}$. As $v\neq 0$, at least one of the vectors $v_i$ is nonzero and therefore an $S$-eigenvalue of $T_i$ associated to the eigensphere $[s]$. Thus $[s]\subset\sigma_{Sp}(T_1)\cup\sigma_{Sp}(T_2)$ and in turn $\sigma_{Sp}(T) = \sigma_{Sp}(T_1) \cup \sigma_{Sp}(T_2)$. 

We assume now that $\sigma_S(T_1)\cap\sigma_S(T_2) = \emptyset$. Then assertions (iv) and (v) imply that $s\in\sigma_{Sc}(T)\cup\sigma_{Sr}(T)$ if and only if $s\in\sigma_{Sc}(T_i)\cup\sigma_{Sr}(T_i)$ for either $i=1$ or $i=2$. We assume w.l.o.g. $s\in\sigma_{Sc}(T_1)\cup\sigma_{Sr}(T_1)$ and thus $s\in\rho_S(T_2)$. As $\ran(\Q_{s}(T_2)) = V_2$, we deduce from (iii) that and \Cref{DSuSp} that $\ran(\Q_{s}(T))$ is dense in $V = V_1\oplus V_2$ if and only if $\ran(\Q_{s}(T_1))$ is dense in $V$. In other words: $s\in\sigma_{Sc}(T)$ if and only if $s\in \sigma_{Sc}(T_1)$ and in turn $s\in\sigma_{Sr}(T)$ if and only if $s\in\sigma_{Sr}(T_1)$.

\end{proof}

\begin{theorem}
Let $T\in\sigma_{S}(T)$ with $\rho_S(T)\neq\emptyset$ and let $\sigma\subset\sigma_{S}(T)$ be a spectral set of $T$. Then
\begin{enumerate}[(i)]
\item $E_{\sigma}(\dom(T))\subset\dom(T)$
\item $T(\dom(T)\cap V_{\sigma})\subset V_{\sigma}$
\item \label{sfds}$\sigma = \sigma_{SX}(T_{\sigma})$
\item \label{asd1}$\sigma \cap \sigma_{Sp}(T) = \sigma_{Sp}(T_{\sigma})$
\item \label{asd2}$\sigma\cap\sigma_{Sc}(T) = \sigma_{Sc}(T_{\sigma})$
\item \label{asd3} $\sigma\cap\sigma_{Sr}(T) = \sigma_{Sr}(T_{\sigma})$
\end{enumerate}
If the spectral set $\sigma$ is bounded, then we further have
\begin{enumerate}[(i), resume]
\item \label{XInf}$V_{\sigma}\subset \dom(T^{\infty})$
\item \label{ttse}$T_{\sigma}$ is a bounded operator on $V_{\sigma}$. 
\end{enumerate}
\end{theorem}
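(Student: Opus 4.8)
The plan is to derive everything from the machinery already in place, chiefly \Cref{Commute}, \Cref{ProdThm}, \Cref{ProdPoly} and \Cref{ProjGeneral}. First I would get (i) and (ii) by applying \Cref{Commute} with $f=\chi_\sigma$ and $P(s)=s$: this yields $E_\sigma v=\chi_\sigma(T)v\in\dom(T)$ and $E_\sigma Tv=TE_\sigma v$ for every $v\in\dom(T)$, which is (i), and for $v\in\dom(T)\cap V_\sigma$ (so that $v=E_\sigma v$) we get $Tv=TE_\sigma v=E_\sigma Tv\in V_\sigma$, which is (ii). As by-products one has $\dom(T)\cap V_\sigma=E_\sigma(\dom(T))$ (inclusion ``$\supseteq$'' is (i), ``$\subseteq$'' is $v=E_\sigma v$) and $T_\sigma$ is closed, its graph being the intersection of the closed graph of $T$ with the closed set $V_\sigma\times V$. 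Setting $\sigma'=\sigma_{SX}(T)\setminus\sigma$ (again a spectral set) and $T_{\sigma'}=T|_{\dom(T)\cap V_{\sigma'}}$, the pair $E_1=E_\sigma$, $E_2=E_{\sigma'}$ then satisfies the hypotheses of \Cref{ProjGeneral}, and all identifications such as $\dom(T_\sigma^2)=\dom(T^2)\cap V_\sigma$ and $\Q_p(T_\sigma)=\Q_p(T)|_{\dom(T_\sigma^2)}$ hold.

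The key step, which I expect to be the main obstacle, is the localisation $\sigma_S(T_\sigma)\subseteq\sigma$ and, symmetrically, $\sigma_S(T_{\sigma'})\subseteq\sigma'$. Fix $p\in\F_0\setminus\sigma$. Since $[p]$ is compact and $\sigma$ is closed, axially symmetric and misses $p$, the sets $\sigma$ and $\sigma'\cup[p]$ are disjoint, closed and axially symmetric, so by \Cref{CDExist} I can choose slice Cauchy domains $U_\sigma\supset\sigma$ and $U_{\sigma'}\supset\sigma'$ with $\overline{U_\sigma}\cap\overline{U_{\sigma'}}=\emptyset$ and $U_\sigma\cap[p]=\emptyset$. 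Then
\[ g(s):=\chi_\sigma(s)\,\Q_p(s)^{-1},\qquad\text{equal to }\Q_p(s)^{-1}\text{ on }U_\sigma\text{ and to }0\text{ on }U_{\sigma'}, \]
is intrinsic on $U_\sigma\cup U_{\sigma'}\supset\sigma_S(T)$ and has a zero of order at least $2$ at infinity, so $g\in\intrin(\sigma_S(T)\cup\{\infty\})$. From \Cref{ProdThm} (using $\chi_\sigma g=g\chi_\sigma=g$) I obtain $g(T)=\chi_\sigma(T)g(T)=g(T)\chi_\sigma(T)$, so $g(T)$ maps $V$ into $V_\sigma$ and annihilates $V_{\sigma'}$; from \Cref{ProdPoly} with the intrinsic polynomial $P=\Q_p$ of degree $2$ I obtain $\Q_p(T)g(T)=(\Q_p g)(T)=\chi_\sigma(T)=E_\sigma$ together with $g(T)v\in\dom(T^2)$ for all $v\in V$; and from \Cref{Commute} I obtain $g(T)\Q_p(T)v=\Q_p(T)g(T)v=E_\sigma v$ for $v\in\dom(T^2)$. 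Restricting these identities to $V_\sigma$ then shows, after a routine domain check, that $g(T)|_{V_\sigma}$ is a bounded two-sided inverse of $\Q_p(T_\sigma)$, i.e. $p\in\rho_S(T_\sigma)$.

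Next I would assemble the statements. By \Cref{ProjGeneral}, $\sigma_S(T)=\sigma_S(T_\sigma)\cup\sigma_S(T_{\sigma'})$; since $\sigma_S(T)=(\sigma\cap\F_0)\sqcup(\sigma'\cap\F_0)$ and, by the previous paragraph, $\sigma_S(T_\sigma)\subseteq\sigma\cap\F_0$ and $\sigma_S(T_{\sigma'})\subseteq\sigma'\cap\F_0$, it follows that $\sigma_S(T_\sigma)=\sigma\cap\F_0$ and $\sigma_S(T_{\sigma'})=\sigma'\cap\F_0$; in particular these two spectra are disjoint, so the remaining conclusions of \Cref{ProjGeneral} apply and give $\sigma_{Sp}(T)=\sigma_{Sp}(T_\sigma)\cup\sigma_{Sp}(T_{\sigma'})$ and the analogous splittings of $\sigma_{Sc}$ and $\sigma_{Sr}$. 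Intersecting each splitting with $\sigma$ and observing that the $T_\sigma$-part already lies in $\sigma$ while the $T_{\sigma'}$-part lies in the disjoint set $\sigma'$ yields (iv), (v) and (vi). For (iii): if $\sigma$ is bounded, then $T_\sigma$ is bounded by (viii) below, so $\sigma_{SX}(T_\sigma)=\sigma_S(T_\sigma)=\sigma$; if $\infty\in\sigma$, then $T$ is necessarily unbounded while $\sigma'$ is a bounded spectral set, so $T_{\sigma'}$ is bounded by (viii), and hence $T_\sigma$ must be unbounded (otherwise $T=T_\sigma E_\sigma+T_{\sigma'}E_{\sigma'}$ would be bounded), which gives $\sigma_{SX}(T_\sigma)=\sigma_S(T_\sigma)\cup\{\infty\}=\sigma$.

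Finally the bounded case. If $\sigma$ is bounded then $\infty\in\sigma'$, so $\chi_\sigma$ vanishes on a neighbourhood of infinity, i.e. has a zero of infinite order there; \Cref{ProdPoly} then gives $\chi_\sigma(T)v\in\dom(T^{\infty})$ for all $v\in V$, that is $V_\sigma=E_\sigma V\subseteq\dom(T^{\infty})$, which is (vii). For (viii), the function $h(s):=s\,\chi_\sigma(s)$ again lies in $\intrin(\sigma_S(T)\cup\{\infty\})$ (it vanishes near infinity), so $h(T)\in\boundOP(V)$; \Cref{ProdPoly} with $P(s)=s$ identifies $h(T)$ with $T\chi_\sigma(T)=TE_\sigma$, and since $T_\sigma v=Tv=TE_\sigma v=h(T)v$ for $v\in V_\sigma$, the operator $T_\sigma$ is the restriction of the bounded operator $h(T)$ to $V_\sigma$ and is therefore bounded. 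The one genuinely delicate point throughout is, as indicated, the transfer of the pseudo-resolvent identities $\Q_p(T)g(T)=E_\sigma=g(T)\Q_p(T)$ from $T$ to the restriction $T_\sigma$, where one has to keep careful track of the domains of the unbounded powers of $T$ and of the behaviour of $g$ at infinity.
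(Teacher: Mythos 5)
Your proposal is correct and follows essentially the same route as the paper: (i) and (ii) from \Cref{Commute}, the localisation $\sigma_S(T_\sigma)\subseteq\sigma$ via the intrinsic function $\chi_\sigma\Q_p^{-1}$ combined with \Cref{ProdPoly} and \Cref{Commute}, assembly of (iii)--(vi) through \Cref{ProjGeneral}, and (vii) from the infinite-order zero of $\chi_\sigma$ at infinity. The only (harmless) deviation is in (viii), where you realise $T_\sigma$ as the restriction of the bounded operator $(s\chi_\sigma(s))(T)$, whereas the paper simply invokes that $T_\sigma$ is closed and everywhere defined on $V_\sigma$; your treatment of the slice Cauchy domains avoiding $[p]$ is in fact slightly more careful than the paper's.
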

\begin{proof}
Assertion (i) follows from the definition of $E_{\sigma}$ and \Cref{Commute}. In order to prove (ii), we observe that if $v\in\dom(T)\cap V_{\sigma}$, then $E_{\sigma}v = v$. Hence, we deduce from \Cref{Commute} that  $E_{\sigma}Tv = TE_{\sigma}v =  Tv$, which implies $Tv\in V_{\sigma}$.

If $\sigma$ is bounded, then we can choose $U_{\sigma}$ bounded and hence $\chi_{\sigma}$ has a zero of infinite order at infinity. We conclude from \Cref{ProdPoly} that $v = E_{\sigma}v = \chi_{\sigma}(T)v \in\dom(T^{\infty})$ for any $v\in V_{\sigma}$ and hence \ref{XInf} holds true. In particular, $V_{\sigma}\subset \dom(T)$. Therefore $T_{\sigma}$ is a bounded operator on $V_{\sigma}$ as it is closed and everywhere defined.

We show now assertion (iii) and consider first a point $s\in\F_0\setminus\sigma$. We show $s\in\rho_S(T_{\sigma})$. For an appropriately chosen slice Cauchy domain $U_{\sigma}$, the function $f(s) := \Q_{s}(p)^{-1}\chi_{U_{\sigma}}(s)$ belongs to $\intrin(\sigma_{S}(T)\cup \{\infty\})$. By \Cref{ProdPoly} and \Cref{Commute}, we have
\[ f(T) \Q_{s}(T)v = \chi_{U_{\sigma}}(T)v = E_{\sigma}v, \qquad\text{for } v\in \dom(T^2)\cap V_{\sigma}\]
and 
\[ \Q_{s}(T)f(T) v = \chi_{U_{\sigma}}(T)v = E_{\sigma}v = v \qquad\text{for } v \in V_{\sigma}.\]
Hence, $\Q_{s}(T_\sigma) = \Q_{s}(T)|_{V_{\sigma}\cap\dom(T^2)}$ has the inverse $f(T)|_{V_{\sigma}}\in\boundOP(X_{\sigma})$. Thus $s\in\rho_S(T_{\sigma})$ and in turn $\sigma_{S}(T_{\sigma})\subset \sigma \cap\F_0 =:\sigma_{1}$. The same argumentation applied to $T_{\sigma'}$ with $\sigma' = \sigma_{SX}(T)\setminus \sigma$ shows that $\sigma_{S}(T_{\sigma'})\subset \sigma'\cap\F_0:= \sigma_2$. But by \ref{PGsig} in \Cref{ProjGeneral}, we have
\[ \sigma_{S}(T_{\sigma})\cup\sigma_{S}(T_{\sigma}) = \sigma_{S}(T) = \sigma_1 \cup \sigma_2\]
and hence $\sigma_{S}(T_{\sigma}) = \sigma_1 = \sigma\cap\F_0$  and $\sigma_{S}(T_{\sigma'}) = \sigma_2 = \sigma'\cap\F_0$. If $\sigma$ is bounded, then this is equivalent to \ref{sfds} because of \ref{ttse}. If $\sigma$ is not bounded, then $\infty\in\sigma$ and $T$ is not bounded on $X$. However, in this case $\sigma'$ is bounded and hence $T_{\sigma}\in\boundOP(V_{\sigma})$. But as $T = T_{\sigma}E_{\sigma} + T_{\sigma'}E_{\sigma'}$, we conclude that $T_{\sigma}$ is unbounded as $T$ is unbounded. Hence $\infty \in \sigma_{SX}(T_{\sigma})$ and \ref{ttse} holds true also in this case.

Finally, \cref{asd1,asd2,asd3} are direct consequences of \cref{jj1,jj2,jj3} in \Cref{ProjGeneral} as we know now that $\sigma_{S}(T_{\sigma})$ and $\sigma_{S}(T_{\sigma'})$ are disjoint.

\end{proof}

\begin{example}\label{CEx}
Choose a generating basis $I$, $J$ and $K = IJ$ of $\H$ and consider the quaternionic right-linear operator $T$ on $V =\H^2$ that is defined by its action on the two right linearly independent right eigenvectors $v_1 = (1,I)^T$ and $v_2 = (J,-K)^T$, namely
\begin{equation*}
\begin{pmatrix} 1\\ I\end{pmatrix}\mapsto \begin{pmatrix}0\\ 0\end{pmatrix} \qquad \text{and}\qquad \begin{pmatrix} J\\ -K\end{pmatrix}\mapsto \begin{pmatrix}-K\\-J\end{pmatrix}= \begin{pmatrix}J\\-K\end{pmatrix}I.
\end{equation*}
Its matrix representation is
\begin{equation*}
T  = \frac12 \begin{pmatrix}-I & 1 \\ -1 & -I\end{pmatrix}.
\end{equation*}
Since,  for operators on finite-dimensional spaces, the $S$-spectrum coincides with the set of right-eigenvalues, cf. \cite{Colombo:2012}, we have $\sigma_S(T) = \sigma_R(T) = \{0\}\cup\ss$. Indeed, we have
\begin{align*}
\Q_s(T) &= \frac{1}{2}\begin{pmatrix} -1 & -I \\ I & -1\end{pmatrix} - s_0\begin{pmatrix}- I & 1 \\ -1 & -I \end{pmatrix} + |s|^2 \begin{pmatrix}1 & 0 \\ 0 & 1\end{pmatrix}\\
&= \begin{pmatrix} -\frac 12 + |s|^2 +s_0I & -s_0 -\frac 12 I \\ s_0 +\frac 12 I & -\frac12 + |s|^2 + s_0  I \end{pmatrix}
\end{align*}
and hence 
\[\Q_s(T)^{-1} = |s|^{-2}(-1 +  2 Is_0 + |s|^2)^{-1} \begin{pmatrix} -\frac12 + |s|^2 + Is_0 & \frac{1}{2}I + s_0\\ -\frac12 I - s_0 & -\frac12  + |s|^2+Is_0\end{pmatrix},\]
which is defined for any $s\notin \{0\}\cup\ss$. For any $s\in\rho_S(T)$, the left $S$-resolvent is therefore given by
\[S_L^{-1}(s,T) = \frac1{2}|s|^{-2}(-1+|s|^2 + 2Is_0)^{-1}\begin{pmatrix}|s|^2 (I + 2 \overline{s}) + \overline{s} (-1 + 2 I s_0) &
-|s|^2 + \overline{s} (I + 2s_0) \\ |s|^2 - \overline{s} (I + 2 s_0) & |s|^2 (I + 2 \overline{s}) + \overline{s} (-1 + 2 Is_0)
\end{pmatrix}.\]
Since $\sigma_S(T)\cap\C_I = \{0,I,-I\}$, we choose $U_{\{0\}} = B_{1/2}(0)$ and $U_{\ss} =   B_2(0)\setminus B_{2/3}(0)$. For $s = \frac12 e^{I\varphi}\in \partial U_{\{0\}}(0)\cap\C_I$, we have
\[
S_L^{-1}(s,T) = 2e^{-I\varphi}\left(3I + 4\Re\left(e^{I\varphi}\right)\right)^{-1}
\begin{pmatrix}
I+e^{I\varphi}+2\cos(\varphi) & 2 + Ie^{I\varphi}+2I\cos\varphi\\ 
-2 - Ie^{I\varphi}+2I\cos\varphi & I+e^{I\varphi}+2\cos\varphi \end{pmatrix}
\]
and so
\begin{align*} E_{\{0\}} =& \frac1{2\pi}\int_{\partial(U_{\{0\}}\cap\C_I)} S_L^{-1}(s,T)\,ds_I\\
=&\frac{1}{2\pi} \int_{0}^{2 \pi} 2e^{-I\varphi}\left(3I + 4\Re\left(e^{I\varphi}\right)\right)^{-1}\cdot\\
&\phantom{\frac{1}{2\pi} \int_{0}^{2 \pi}}\cdot\begin{pmatrix}
I+e^{I\varphi}+2\cos(\varphi) & 2 + Ie^{I\varphi}-2I\cos\varphi\\ 
-2 - Ie^{I\varphi}+2I\cos\varphi & I+e^{I\varphi}+2\cos\varphi \end{pmatrix}\frac12 e^{I\varphi} I (-I) \, d\varphi\\
=& \frac12 \begin{pmatrix}
1 & - I\\ I & 1
\end{pmatrix}.
\end{align*}
A similar computation shows that
\[ E_{\ss} = \frac{1}{2\pi} \int_{\partial(U_{\ss}\cap\C_I)} S_L^{-1}(s,T)\, ds_I= \frac12\begin{pmatrix} 1 &I\\ -I&1\end{pmatrix}.\]
An easy computation shows that these matrices actually define projections on $\H^2$ with $E_{\{0\}} + E_{\ss}  = \id$. Moreover, we have $E_{\{0\}} v_1 = v_1$ and $E_{\ss} v_2 = 0$ as well as $E_{\{0\}}v_2 = 0$ and $E_{\ss}v_2 = v_2$. Thus, the invariant subspace $E_{\{0\}}V$ associated to the spectral set $\{0\}$ is the right linear span of $v_1$, which consist of all eigenvectors with respect to the real eigenvalues $0$ as $T(v_1)a = T(v_1)a = 0$ for all $a\in\H$. The invariant subspace $E_{\ss}$ associated to the spectral set $\ss$ consists of the right linear span of $v_2$. For $a\in\H\setminus\{0\}$, we have $T(v_2a) = T(v_2)a = v_2 Ia = (v_2a) (a^{-1}Ia)$. Thus, as $a^{-1}Ia\in\ss$, the subspace $E_{\ss}$ consists of all right eigenvectors associated to an eigenvalues in $\ss$. (This is true only because the associated subspace is one-dimensional! Otherwise the subspace consists of sums of eigenvectors associated to eigenvalues in the sphere, which do no have to be eigenvectors again.)

Finally, we can construct functions, which are left and right slice hyperholomorphic on $\sigma_S(T)$, but for which the $S$-functional calculi for left and right slice hyperholomorphic functions yield different operators: consider the function $f(s) = c_1 \chi_{U_{\{0\}}}(s) + c_2 \chi_{U_{\ss}}(s)$ such that $c_1$ or $c_2$ does not belong to $\C_I$.  Choose for instance $c_1 = J$ and $c_2 = 0$ for the sake of simplicity.  This function is a locally constant slice function on $U = U_{\{0\}}\cup U_{\ss}$ and thus left and right slice hyperholomorphic by \Cref{BHolSplit}. Then
\begin{gather*} 
\frac{1}{2\pi} \int_{\partial(U\cap\C_I)} S_L^{-1}(s,T)\,ds_I f(s) = \left(\frac{1}{2\pi} \int_{\partial(B_{1/2}(0)\cap\C_I)}S_L^{-1}(s,T) \,ds_I \right)J\\
 =\frac12  \begin{pmatrix} 1 & -I\\ I &1 \end{pmatrix}J = \frac12 \begin{pmatrix} J & -K\\ K &J\end{pmatrix},
\end{gather*}
but
\begin{gather*}
\frac{1}{2\pi}\int_{\partial(U\cap\C_I)} f(s)\,ds_I\,S_R^{-1}(s,T) = J \left(\frac{1}{2\pi}\int_{\partial(B_{1/2}(0)\cap\C_I)} \,ds_I\,S_R^{-1}(s,T)\right) \\
= \frac12 J\begin{pmatrix} 1 &-I\\ I & 1\end{pmatrix} = \frac12 \begin{pmatrix} J & K \\ -K & J\end{pmatrix}.
\end{gather*}
As pointed in \Cref{InconsRem}, the spectral projections cannot commute with arbitrary scalars because the respective invariant subspaces are not two-sided. Indeed, $-Jv_2 = (1, I) = v_1$, which does obviously not belong to $E_{\ss}V$. 
\end{example}

\printbibliography 

\end{document}